\theoremstyle{plain}
    \newtheorem{thm}{Theorem}[section]
    \newtheorem{corollary}[thm]{Corollary}
    \newtheorem{example}[thm]{Example}
    \newtheorem{lemma}[thm]{Lemma}
    \newtheorem{proposition}[thm]{Proposition}
    \newtheorem{question}[thm]{Question}
    \newtheorem{theorem}[thm]{Theorem}
\theoremstyle{definition}
    \newtheorem{definition}[thm]{Definition}
    \newtheorem{notation}[thm]{Notation}
    \newtheorem*{notation*}{Notation and Terminology}
    \newtheorem{remark}[thm]{Remark}
\theoremstyle{remark}
\newcommand{\PP}{\mathbb{P}}
\newcommand{\Q}{\mathbb{Q}}
\newcommand{\R}{\mathbb{R}}
\newcommand{\aff}{\operatorname{aff}}
\newcommand{\ant}{\operatorname{ant}}
\newcommand{\Aut}{\operatorname{Aut}}
\newcommand{\Bd}{\operatorname{Bd}}
\newcommand{\Bir}{\operatorname{Bir}}
\newcommand{\GL}{\operatorname{GL}}
\newcommand{\NS}{\operatorname{NS}}
\newcommand{\PGL}{\operatorname{PGL}}
\newcommand{\Rk}{\operatorname{Rk_f}}
\newcommand{\Con}{\operatorname{C}}
\newcommand{\J}{\operatorname{J}}
\newcommand{\JN}{\operatorname{J'}}
\newcommand{\Alb}{\operatorname{Alb}}
\begin{document}

\title[Jordan Property]
{Jordan property for automorphism groups of compact varieties}

\author{Yujie Luo, Sheng Meng, De-Qi Zhang}


\address
{
\textsc{Department of Mathematics} \endgraf
\textsc{National University of Singapore,
Singapore 119076, Republic of Singapore
}}
\email{lyj96@nus.edu.sg}

\address{
    \textsc{School of Mathematical Sciences, Ministry of Education Key Laboratory of Mathematics and Engineering Applications \& 
    Shanghai Key Laboratory of PMMP}\endgraf
    \textsc{East China Normal University, Shanghai 200241, China}\endgraf
}
\email{smeng@math.ecnu.edu.cn}

\address
{
\textsc{Department of Mathematics} \endgraf
\textsc{National University of Singapore,
Singapore 119076, Republic of Singapore
}}
\email{matzdq@nus.edu.sg}

\begin{abstract}
In this note, we report some recent progress on the Jordan property for (birational) automorphism groups of projective varieties and compact complex varieties.
\end{abstract}

\subjclass[2020]{
14J50, 
32M05. 
}

\keywords{Jordan property, automorphism group, birational automorphism group, algebraic group, Fujiki's class, K\"ahler manifold}

\maketitle
\tableofcontents

\section{Introduction}

The story starts with a celebrated theorem proven by Camille Jordan \cite[Section 40]{Jor78} in 1878. 
This theorem establishes that for any field $k$ of characteristic zero and any positive integer $n$, the linear automorphism group $\GL_n(k)$ of an $n$-dimensional $k$-vector space possesses the {\it Jordan property}:
there exists a Jordan constant $\J = \J(n)$ such that any finite subgroup $H$ of $\GL_n(k)$ contains an abelian 
subgroup $H_1$ with index $[H : H_1]$ not exceeding $\J(n)$.

Researchers subsequently pondered whether other automorphism groups, such as general (non-linear) automorphism groups or even birational automorphism groups of varieties, exhibit the same Jordan property.

More than a century later, we have gained a substantial understanding of the finite subgroups of the Cremona group of rank 2, denoted as ${\rm Cr}_2(k) = \Bir(\mathbb{P}_k^2)$. Notably, when the characteristic of $k$ is zero, the work of Jean-Pierre Serre \cite[Theorem 5.3]{Ser09} confirms that ${\rm Cr}_2(k)$ possesses the Jordan property with $\J\le 2^{10}\cdot3^4\cdot5^2\cdot7$.
Later, Popov explicitly and systematically stated in \cite{Pop11, Pop14} several problems in the general setting involving algebraic geometry, complex geometry and differential geometry.

In this survey, we will primarily highlight recent advancements, following Serre's work. 
The following is an outline of the structure of this article:

In Section \ref{sec: pre}, we will introduce some terminology and notation, and state some fundamental and crucial lemmas related to the Jordan property.
In Section \ref{sec: bir}, we will focus on birational automorphism groups in characteristic 0, summarizing the significant works of  of Bandman, Birkar, Popov, Prokhorov, Shramov and Zarhin. In particular, we demonstrate that Serre's results have been comprehensively generalized to all dimensions.
In Section \ref{sec: aut}, we will elaborate on the Jordan property for automorphism groups of projective varieties.
In Section \ref{sec: pos}, we will delve into the $p$-Jordan and generalized $p$-Jordan properties in positive characteristics.
In Section \ref{sec: fujiki}, we will present two distinct strategies for addressing Fujiki's class $\mathcal{C}$.
Finally, in Section \ref{sec: manifold}, we will enumerate several partial results pertaining to general compact complex manifolds.

For groups acting smoothly or continuously on a real manifold, its Jordan property has be intensively studied and we refer to Mundet i Riera \cite[\S 1]{MiR19} for the excellent survey of the related results.

We refer to \cite{BZ23} and \cite{BZ24}, for two more excellent surveys on the Jordan property of (bimeromorphic) automorphism groups of holomorphic $\mathbb{P}^1$-bundles, and the interrelations between geometric properties
of complex projective varieties or compact Kähler manifolds and the Jordan
property, or the lack of it, of their (bimeromorphic) automorphism groups.

\par \vskip 1pc \noindent
{\bf Acknowledgement.}

The authors would like to thank Yifei Chen, Popov, Prokhorov, Shramov and the referee for valuable suggestions and comments to improve this paper, and Bandman and Zarhin for bringing the valuable survey papers \cite{BZ23} and \cite{BZ24} to our attention.
The first author is supported by Peng Tsu Ann assistant professorship of NUS.
The second author is supported by Shanghai Pilot Program for Basic Research, 
Science and Technology Commission of Shanghai Municipality (No. 22DZ2229014) and a National Natural Science Fund.
The third author is supported by ARF: A-8002487-00-00, of NUS and thanks the organizers of ``Varieties with Boundaries," June 2024, Wakkanai, and Algebraic Geometry seminar organizers of Hiroshima University and Kyoto University, July 2024, for the opportunities of the talks and support.

\section{Preliminaries}\label{sec: pre}
We recall and adopt some notations and easy lemmas from \cite{MZ18} for the convenience of the readers.
Most of the concepts and results are well-known and appeared in early literature with possibly different formulation.

\begin{definition}[cf. {\cite[Definition~2.2]{MZ18}}]\label{defn: rank and number} Given a group $G$, we introduce the following constants:
$$\begin{array}{lll}
&\Bd(G)=\sup\{|F|:|F|<\infty, F\leq G\},\\
&\Rk(G)=\sup\{\Rk(A):|A|<\infty, A \text{ is abelian}, A\leq G\},\\
\end{array}$$
 where $\Rk(A)$ is the {\it minimal number} of generators of a finite abelian group $A$.
 Similarly, we may define these constants for a family of groups.
\end{definition}

\begin{definition}\label{defn: J and JN}
A group $G$ (resp. a family $\mathcal{G}$ of groups) has the {\it Jordan property} if
there exists a Jordan constant $\J = \J(G)$ (resp. $\J = \J(\mathcal{G})$) such that any finite subgroup $H$ of $G$ (resp. $G\in \mathcal{G}$) contains an abelian 
subgroup $H_1$ with index $[H : H_1]$ not exceeding $\J$.
We use $\JN = \JN(G)$ (resp. $\JN = \JN(\mathcal{G}$)) if we assume $H_1 \lhd H$ in the above definition.
\end{definition}

\begin{remark}
Note that $\J(G)\le \JN(G)\le \J(G)!$.
Indeed, for any finite subgroup $H$ and its abelian subgroup $H_1$ with $[H:H_1]\le \J(G)$.
The action of $H$ on $H/H_1$ yields a homomorphism of $H$ to the symmetric group on $[H:H_1]$ letters.
The kernel of this homomorphism is a normal subgroup $H_2$ of $H$, of index dividing $\J(G)!$, and contained in $H_1$.
\end{remark}

The easy observations below are frequently used.

\begin{lemma}[{\cite[Lemma~2.3]{MZ18}}]\label{lem: pre lemma 1}
Consider the exact sequence of groups
$$1 \to G_1 \to G \to G_2 \to 1.$$

\begin{itemize}
\item[(1)] $\J(G)\leq \Bd(G_2)\cdot \J(G_1)$.
\item[(2)] If $G_1$ is finite, then $\J(G_2)\leq \J(G)$.
\item[(3)] If $\Bd(G_1)=1$, then $\J(G)\le \J(G_2)$.
\item[(4)] If $G\cong G_1\times G_2$, then $\J(G)\leq \J(G_1)\cdot \J(G_2)$.
\item[(5)] $\Rk(G)\leq \Rk(G_1)+ \Rk(G_2)$.
\item[(6)] $\J(G)\leq \J(G_2)\cdot \Bd(G_1)^{\Rk(G_2)\cdot \Bd(G_1)}$.
\end{itemize}
\end{lemma}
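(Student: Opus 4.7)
The proof is organized around the decomposition of a finite subgroup $H\leq G$ into $H_1:=H\cap G_1$, a finite normal subgroup lying in $G_1$, and $H_2:=\pi(H)\leq G_2$, with $|H_2|=[H:H_1]$. Parts (1)--(5) reduce directly to this bookkeeping; the real work lies in (6).

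For (1) I apply the Jordan property of $G_1$ to $H_1$ to obtain an abelian $A\leq H_1$ with $[H_1:A]\leq\J(G_1)$, and since $[H:H_1]\leq\Bd(G_2)$ the composed index is at most $\Bd(G_2)\cdot\J(G_1)$. For (2), finiteness of $G_1$ makes $\pi^{-1}(F)$ finite for every finite $F\leq G_2$, and an abelian subgroup of $\pi^{-1}(F)$ of index $\leq\J(G)$ projects to an abelian subgroup of $F$ whose index can only go down. For (3), $\Bd(G_1)=1$ forces $H_1=\{1\}$ so $\pi|_H$ is injective and Jordan for $G_2$ transfers verbatim. For (4), take abelian subgroups $A_i$ of $\pi_i(H)$ from Jordan in each factor and set $A:=H\cap(A_1\times A_2)$. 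For (5), any finite abelian $A\leq G$ sits in an exact sequence $1\to A\cap G_1\to A\to\pi(A)\to 1$, and the minimal number of generators of a finite abelian group is subadditive in short exact sequences.

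For (6) I proceed in two stages. Invoking Jordan for $G_2$ on $H_2$ produces an abelian $A_2\leq H_2$ with $[H_2:A_2]\leq\J(G_2)$ and $\Rk(A_2)\leq\Rk(G_2)$. Setting $H':=\pi^{-1}(A_2)\cap H$ gives $[H:H']\leq\J(G_2)$ and a short exact sequence
\[
1\longrightarrow H_1\longrightarrow H'\longrightarrow A_2\longrightarrow 1,\qquad |H_1|\leq\Bd(G_1),\ A_2\text{ abelian}.
\]
Pass to the centralizer $K:=C_{H'}(H_1)$; the conjugation action gives $H'/K\hookrightarrow\Aut(H_1)$, hence $[H':K]\leq|\Aut(H_1)|\leq\Bd(G_1)^{\Bd(G_1)}$. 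Since $K/Z(H_1)$ embeds into $A_2$ it is abelian, and $Z(H_1)$ is central in $K$, so $K$ is nilpotent of class at most $2$ with $[K,K]\subseteq Z(H_1)\subseteq Z(K)$.

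The main obstacle is the second stage, bounding $[K:Z(K)]$. Using that $K$ has class $\leq 2$, the commutator pairing $(k_1,k_2)\mapsto[k_1,k_2]$ is bilinear and the assignment $k\mapsto [k,\,\cdot\,]$ descends to a homomorphism
\[
K\longrightarrow\Hom\bigl(K/Z(H_1),\,Z(H_1)\bigr)
\]
whose kernel is exactly $Z(K)$. As $K/Z(H_1)\hookrightarrow A_2$ has rank at most $\Rk(G_2)$ and $|Z(H_1)|\leq\Bd(G_1)$, the target has cardinality at most $\Bd(G_1)^{\Rk(G_2)}$, whence $[K:Z(K)]\leq\Bd(G_1)^{\Rk(G_2)}$. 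Since $Z(K)$ is abelian, multiplying the three indices gives
\[
[H:Z(K)]\leq\J(G_2)\cdot\Bd(G_1)^{\Bd(G_1)}\cdot\Bd(G_1)^{\Rk(G_2)}\leq\J(G_2)\cdot\Bd(G_1)^{\Rk(G_2)\cdot\Bd(G_1)},
\]
with the degenerate ranges $\Bd(G_1)=1$ and $\Rk(G_2)=0$ absorbed by (3) and (1) respectively. The decisive and only nontrivial ingredient is the passage from $K$ being a class-$2$ nilpotent group with small commutator subgroup to an abelian subgroup of controlled index via the commutator-valued Hom map.
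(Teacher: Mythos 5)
Your parts (1)--(5) are correct and amount to the standard bookkeeping with $H_1=H\cap G_1$ and $H_2=\pi(H)$; note that this survey only cites \cite[Lemma~2.3]{MZ18} and does not reprove the lemma, and your argument for (6) --- pass to the preimage $H'$ of an abelian $A_2\leq\pi(H)$, cut down to $K=C_{H'}(H_1)$, observe that $K$ is nilpotent of class at most $2$ with $[K,K]\subseteq Z(H_1)\subseteq Z(K)$, and bound $[K:Z(K)]$ by embedding $K/Z(K)$ into $\Hom(K/Z(H_1),Z(H_1))$ --- is essentially the argument of loc.\ cit.

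There is, however, one quantitative slip at the end of (6). You use $[H':K]\le|\Aut(H_1)|\le\Bd(G_1)^{\Bd(G_1)}$ and $[K:Z(K)]\le\Bd(G_1)^{\Rk(G_2)}$, and the concluding inequality then requires $\Bd(G_1)+\Rk(G_2)\le\Rk(G_2)\cdot\Bd(G_1)$. This fails precisely when $\Rk(G_2)=1$ and $\Bd(G_1)\ge 2$, a range not covered by the two degenerate cases you set aside. The repair is cheap: either sharpen the automorphism bound to $|\Aut(H_1)|\le(|H_1|-1)!\le\Bd(G_1)^{\Bd(G_1)-1}$ (an automorphism fixes the identity and permutes the remaining elements), after which one needs $\Bd(G_1)-1+\Rk(G_2)\le\Rk(G_2)\cdot\Bd(G_1)$, which holds for every $\Rk(G_2)\ge 1$; or observe that when $A_2$ is cyclic the quotient $K/Z(K)$ is cyclic, so $K$ is already abelian and the factor $\Bd(G_1)^{\Rk(G_2)}$ is not needed. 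Finally, your remark that $\Rk(G_2)=0$ must be ``absorbed by (1)'' is apt but worth stating more honestly: in that case (1) yields $\J(G)\le\J(G_1)$, not the literal right-hand side of (6), which can actually fail there (take $G=G_1=S_3$, $G_2=1$: the right-hand side is $1$ while $\J(S_3)=2$); so (6) should be read under the implicit nondegeneracy assumption $\Rk(G_2)\ge 1$.
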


One should be careful that the extension of two Jordan groups may no longer be Jordan.
This is also the key obstruction making the Jordan problem challenging. The following geometric example illustrates how the reduction of the Jordan property via geometric canonical fibrations is not automatic.

\begin{example}
    Let $X=\mathbb{P}^1\times E$ where $E$ is an elliptic curve.
    Let $G=\Bir(X)$.
    Then we have an exact sequence
    $$1\to G_1\to G\to G_2\to 1$$
    where the map $G \to G_2:=\Bir(X)|_E$ is obtained from the albanese map $X \to \Alb(X) = E$, 
    $G_2 \le \Aut(E)\cong E\rtimes F$  for some finite group $F$ and the kernel (of $G \to G_2$) $G_1\le \PGL(2, k(E))$.
    Clearly, $G_1$ and $G_2$ are Jordan (cf.~Theorem \ref{thm: automorphism projective}). However, $G$ is not Jordan (cf.~Theorem \ref{thm: bir surface}).
\end{example}

\begin{lemma}[cf. {\cite[Lemma~2.4]{MZ18}}]\label{lem: pre lemma 2} 
Below are some important constants.
\item[(1)] (Jordan) Let $\Con_n := \JN(\GL_n(k))$ (cf.~Definition\ref{defn: J and JN}, \cite[Theorem 36.14, pp. 258-262]{CR62}). Every general linear group $\GL_n(k)$ is a Jordan group, so every linear algebraic group is a Jordan group. We have $\Con_n = (n+1)!$ afforded by $S_{n+1}$ when $n\ge 71$; see \cite{Col07} for the exact value of $\Con_n$.
\item[(2)] (Minkowski) $\Bd(\GL_n(K))<\infty$ for any field $K$ which is finitely generated over $\mathbb{Q}$. The bound depends on $n$ and $[K:\mathbb{Q}]$ (cf. \cite[Theorem 5, and \S4.3]{Ser07}).
\item[(3)] $\Rk(T) = \dim T$, when $T$ is an algebraic torus.
\item[(4)] $\Rk(\GL_n(k))=n$ (see \cite[\S 15.4 Proposition]{Hu75} and use (3)).
\item[(5)] $\Rk(A)=2\dim A$, when $A$ is an abelian variety.
\end{lemma}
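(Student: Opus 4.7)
The plan is to handle each of the five items separately, as the statement is a compendium of classical facts; the proof amounts to identifying the right reference and sketching the key mechanism in each case.

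For (1), I would follow the standard modern route to Jordan's theorem, as in \cite[Theorem 36.14]{CR62}: first reduce to $k=\mathbb{C}$ via a Lefschetz-principle / spreading-out argument, then use Weyl's averaging trick to conjugate a finite subgroup $H\le \GL_n(k)$ into the unitary group $U(n)$. One then exploits that commutators of unitary matrices sufficiently close to the identity contract under iteration, so the subgroup of $H$ generated by its elements near the identity is abelian and normal in $H$, while a volume packing argument on $U(n)$ bounds the index. For the sharp constant $\Con_n=(n+1)!$ when $n\ge 71$, I would cite Collins \cite{Col07}, whose argument relies on the classification of finite simple groups.

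For (2), given a finite subgroup $H\le\GL_n(K)$, I would choose a finitely generated $\mathbb{Z}$-subalgebra $R\subset K$ such that $H\le\GL_n(R)$ (by clearing denominators and adjoining matrix entries of $H$). Passing to the residue field $\mathbb{F}_q$ at a maximal ideal of good reduction, the reduction map restricted to $H$ is injective via a standard torsion / Hensel-type argument, so $|H|\le|\GL_n(\mathbb{F}_q)|$; the explicit uniformity in $n$ and $[K:\mathbb{Q}]$ is given by Serre's account in \cite{Ser07}.

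Items (3)--(5) are elementary. For (3), any finite abelian subgroup of $T\cong(k^*)^n$ sits inside the torsion subgroup $\mu_\infty^n$ and thus needs at most $n$ generators, the bound being attained by $\mu_m^n$ for suitable $m$. For (4), in characteristic zero a finite abelian subgroup of $\GL_n(k)$ is simultaneously diagonalizable and hence conjugate into the diagonal torus, reducing (4) to (3). For (5), the $m$-torsion $A[m]$ of an abelian variety of dimension $g$ is isomorphic to $(\mathbb{Z}/m)^{2g}$, and every finite subgroup of $A$ sits inside $A[m]$ for $m$ equal to the exponent of the subgroup, giving $\Rk(A)=2g$. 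The main obstacle is (1): once Jordan's theorem itself is in hand, and particularly the Collins refinement for the sharp constant, the remaining items are essentially one-line observations.
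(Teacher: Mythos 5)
Your proposal is correct, and it matches the paper in the only sense available: the paper offers no proof of this lemma at all, treating each item as a classical fact with a citation ([CR62] and [Col07] for (1), [Ser07] for (2), [Hu75] plus the torus computation for (4)), and the arguments you sketch are precisely the standard ones behind those citations. Items (3)--(5) are handled exactly as one would expect (torsion of $(\mathbb{G}_m)^n$, simultaneous diagonalization of a finite abelian subgroup in characteristic zero, and $A[m]\cong(\mathbb{Z}/m)^{2\dim A}$), and your reduction of (4) to (3) is the same reduction the paper indicates.

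One point worth flagging in (2): as written, your argument fixes the finite subgroup $H$ first and then chooses a maximal ideal of good reduction for that $H$, which yields $|H|\leq|\GL_n(\mathbb{F}_q)|$ with $q$ depending on $H$ --- and a bound depending on $H$ is no bound at all, so this step alone does not prove $\Bd(\GL_n(K))<\infty$. The classical repair is to fix the prime(s) in advance: for $K=\mathbb{Q}$ one conjugates $H$ into $\GL_n(\mathbb{Z})$ (it preserves a lattice) and reduces modulo $3$, where the kernel of reduction is torsion-free; for general $K$ finitely generated over $\mathbb{Q}$ one bounds the $\ell$-adic valuation of $|H|$ uniformly for each prime $\ell$, which is exactly what Serre does in the cited \S 4.3. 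You do explicitly outsource the uniformity to Serre, which is consistent with how the paper uses the result, but be aware that the uniformity is the entire content of the statement, not a final refinement.
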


In this survey, we will mainly focus on automorphism groups and birational automorphism groups of varieties (mostly projective or compact).

\begin{notation}\label{not: aut}
Let $X$ be an algebraic variety.
Denote by $\Aut(X)$ the group of all automorphisms of $X$ and $\Bir(X)$ the group of all birational self-maps of $X$.
When $X$ is a complex space, $\Aut(X)$ (resp. $\Bir(X)$) refers to the group of all biholomorphic (resp.~ bimeromorphic) self-maps of $X$.

Given an $\R$ -Cartier divisor $L$ in a projective variety $X$, denote by 
$$\Aut_{[L]}(X):=\{g\in \Aut(X)\,|\, g^*L\equiv L\}$$
where $g^*L\equiv L$ means numerical equivalence.
Given a class $[\alpha]\in H^{1,1}(X,\mathbb{R})$ on a compact complex manifold $X$, denote by  
$$\Aut_{[\alpha]}(X):=\{g\in \Aut(X)\,|\, g^*[\alpha]=[\alpha]\}.$$
\end{notation}

By using the Hilbert Scheme when $X$ is a projective variety (cf.~\cite{Kol96}),
we have the following natural inclusion
$$\Aut(X)\subseteq \Bir(X)\subseteq \textup{Hilb}(X\times X)$$
via $g\in \Bir(X)\mapsto \Gamma_g\in \textup{Hilb}(X\times X)$ where $\Gamma_g$ is the graph of $g$.
Note that both $\Aut(X)$ and $\Bir(X)$ are open subschemes of $\textup{Hilb}(X\times X)$.
This way, we denote by $\Aut_0(X)$ the {\it neutral component} of $\Aut(X)$.
A similar statement works by using the Douady space when $X$ is a compact complex variety, see \cite{Fuj78}.

However, $\textup{Hilb}(X\times X)$ and also $\Aut(X)$ may have infinitely many connected components.
So $\Aut(X)$ may not be a variety.
Such an example is easy to construct whenever $X$ admits an automorphism of positive entropy (e.g. when $X$ is a product of elliptic curves).
Nevertheless, if we fix a Hilbert polynomial $P$ of a subvariety, such as the diagonal of $X\times X$ (the graph of the identity map), then $\Aut(X)\cap \textup{Hilb}_P(X\times X)$ will be a group variety with only finitely many connected components. 
Here, $\textup{Hilb}_P(X\times X)\subseteq \textup{Hilb}(X\times X)$ is the subscheme of subvarieties with Hilbert polynomial $P$, and it is a projective variety (cf.~\cite[Theorem 1.4]{Kol96}). 
In this way, we obtain the following significant lemma for the reduction to the connected group.

\begin{lemma}[{\cite[Lemma~2.5]{MZ18}}]\label{lem: pre lemma 3}
Let $X$ be a projective variety.
Then there exists a constant $\ell$ (depending on $X$),
such that for any finite subgroup $G\leq \Aut(X)$, we have $[G:G\cap \Aut_0(X)] \le \ell$.
\end{lemma}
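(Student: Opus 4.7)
My plan is to factor any finite $G \leq \Aut(X)$ through its induced action on $\NS(X)$, and then combine Minkowski's theorem with the Hilbert-scheme boundedness sketched just before the lemma.

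First, I would introduce the representation $\phi\colon \Aut(X) \to \GL(\NS(X))$ given by pullback of numerical equivalence classes. Since the target $\GL(\NS(X)) \cong \GL_\rho(\Z)$ (with $\rho := \rho(X)$ the Picard number) is discrete while $\Aut_0(X)$ is connected, the kernel $\Aut_\tau(X) := \ker\phi$ contains $\Aut_0(X)$.

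Next, I would prove $N := [\Aut_\tau(X) : \Aut_0(X)] < \infty$ using exactly the Hilbert scheme discussion preceding the lemma. Fix a very ample divisor $L$ on $X$ and polarize $X \times X$ by $H := p_1^*L + p_2^*L$. For any $g \in \Aut_\tau(X)$, one has $g^*L \equiv L$, and under the isomorphism $\Gamma_g \cong X$ via $(x,g(x)) \mapsto x$, the pullback is $H|_{\Gamma_g} = L + g^*L \equiv 2L$. Since Hilbert polynomials of subschemes depend only on the numerical class of the polarization restricted to them, the Hilbert polynomial of $\Gamma_g$ with respect to $H$ coincides with the Hilbert polynomial $P$ of the diagonal $\Delta_X$. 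Hence $\Aut_\tau(X) \subseteq \Aut(X) \cap \Hilb_P(X\times X)$, which is an open subscheme of the projective scheme $\Hilb_P(X\times X)$ and therefore has only finitely many connected components.

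Finally, I would invoke Minkowski (Lemma~\ref{lem: pre lemma 2}(2)) with $K = \Q$, $n = \rho$ to obtain a constant $M := \Bd(\GL_\rho(\Q)) < \infty$. For any finite $G \leq \Aut(X)$, the image $\phi(G)$ is a finite subgroup of $\GL(\NS(X)) \leq \GL_\rho(\Q)$, so $|\phi(G)| \leq M$ and therefore $[G : G \cap \Aut_\tau(X)] \leq M$. Putting the pieces together,
$$[G : G \cap \Aut_0(X)] \leq [G : G \cap \Aut_\tau(X)] \cdot [\Aut_\tau(X) : \Aut_0(X)] \leq MN =: \ell,$$
which depends only on $X$. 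The main obstacle I expect is the finiteness step $[\Aut_\tau(X) : \Aut_0(X)] < \infty$: one must verify that numerical equivalence $g^*L \equiv L$ alone suffices to pin down the Hilbert polynomial of $\Gamma_g$ (rather than requiring linear or rational equivalence), and then invoke projectivity of $\Hilb_P(X\times X)$ to bound the number of components. The remaining steps are routine applications of Minkowski and the triviality of the $\Aut_0(X)$-action on the discrete lattice $\NS(X)$.
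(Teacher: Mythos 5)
Your proposal is correct and follows essentially the same route as the cited source \cite[Lemma~2.5]{MZ18} and the discussion preceding the lemma: bound the image in $\GL(\NS(X))$ by Minkowski, and bound $[\Aut_\tau(X):\Aut_0(X)]$ by placing the graphs inside the projective scheme $\Hilb_P(X\times X)$ for the Hilbert polynomial $P$ of the diagonal. The one point you flag as a potential obstacle --- that $g^*L\equiv L$ already fixes the Hilbert polynomial of $\Gamma_g$ --- is the standard Snapper--Kleiman fact that the Euler characteristic $\chi(X,\mathcal{O}_X(mD))$ depends only on the numerical class of $D$, so the argument closes.
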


\section{Birational automorphism groups in characteristic 0} \label{sec: bir}
In this section, we always work over fields of characteristic $0$ unless otherwise specified.
It is Popov who asked whether the group $\Aut(X)$ (resp. $\Bir(X)$) of all automorphisms (resp. all birational automorphisms) of an algebraic variety $X$ is Jordan
(cf.~\cite[Question 2.30-2.31]{Pop11}).
Popov \cite{Pop14} predicted that $\Aut(X)$ should be Jordan and till now there is no known counter example.
We shall see in the next section, this has been verified when $X$ is projective.
In this section, we focus on $\Bir(X)$.

Popov himself proved that for a projective surface $X$, the group $\Bir(X)$ is Jordan unless
$X$ is birational to the product $\PP^1 \times E$ with $E$ being an elliptic curve. Later, Zarhin confirmed that
$\Bir(\PP^1 \times E)$ is not Jordan (cf.~\cite[\S 2.2]{Pop11}, \cite[Theorem 5.3]{Ser09}, \cite[Theorem 1.2]{Zar14}, \cite[Theorem 1.3]{Zar15}).
In dimension three, Prokhorov and Shramov \cite[Theorem 1.8]{PS18} gave fully classification of threefolds with non-Jordan birational automorphism groups; see also \cite{PS20} for the case of uniruled K\"ahler threefolds.
We summarize as follows.

\begin{theorem}\label{thm: bir surface}
    Let $X$ be an algebraic surface.
    Then $\Bir(X)$ is Jordan if and only if $X$ is not birational to $\PP^1 \times E$ with $E$ an elliptic curve.
\end{theorem}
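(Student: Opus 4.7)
The plan is to handle the two directions separately. For the ``only if'' direction, the approach is to invoke Zarhin's construction \cite{Zar14, Zar15}, which produces for every integer $n$ a finite subgroup of $\Bir(\mathbb{P}^1\times E)$ admitting no abelian subgroup of index $\le n$; the key ingredient is the Heisenberg group associated with the $\ell$-torsion of $E$, acting compatibly via a non-trivial $\ell$-torsion line bundle to give a projective action on the $\mathbb{P}^1$-fibre.

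For the ``if'' direction, since $\Bir(X)$ is a birational invariant we may replace $X$ by a convenient birational model and argue by cases on $\kappa(X)$ via the Enriques--Kodaira classification. If $\kappa(X)=2$, then $\Bir(X) = \Aut(X_{\min})$ is finite by Matsumura's theorem, hence trivially Jordan. If $\kappa(X)\in\{0,1\}$, then $X$ admits a unique minimal model $X_{\min}$ and $\Bir(X)=\Aut(X_{\min})$, so the Jordan property will follow from the results on $\Aut$ of projective surfaces quoted in Section~\ref{sec: aut}. If $\kappa(X) = -\infty$ and $X$ is rational, one reduces to Serre's theorem that ${\rm Cr}_2(k)$ is Jordan. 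If $\kappa(X) = -\infty$ and $X$ is birational to $\mathbb{P}^1 \times C$ with $g(C) \ge 2$, then the ruling is the maximal rationally connected fibration and is preserved by $\Bir(X)$, producing an exact sequence
$$1 \to K \to \Bir(X) \to \Aut(C) \to 1$$
with $K \le \PGL_2(k(C))$ and $\Aut(C)$ finite; Lemma~\ref{lem: pre lemma 1}(1) together with Jordan's theorem for $\PGL_2$ finishes this case.

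The decisive case is $g(C) = 1$ with $X$ birational to a ruled surface over an elliptic curve $E$, but \emph{not} to the trivial product $\mathbb{P}^1 \times E$. One again obtains an exact sequence $1 \to K \to \Bir(X) \to G \to 1$ with $K \le \PGL_2(k(E))$ and $G \le \Aut(E) \cong E \rtimes F$ for some finite $F$, exactly as in the example preceding Lemma~\ref{lem: pre lemma 2}. The strategy is to exploit the rigidity of a non-trivial ruling: one would show that the subgroup of translations of $E$ admitting a lift to $\Bir(X)$ must preserve the isomorphism class of the ruled surface and hence lies in a sublattice of bounded index; simultaneously, elements of $K$ must commute with the monodromy data of the twist, forcing $K$ to have bounded rank. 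These two bounds preclude the Heisenberg-type obstruction from Zarhin's construction and yield Jordan via Lemma~\ref{lem: pre lemma 1}.

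The main obstacle is precisely this last case: tracking the interplay between the infinite elliptic base, the $\PGL_2$-valued vertical group, and the non-trivial twist of the ruling, and showing quantitatively that every potential analogue of Zarhin's Heisenberg subgroups degenerates to a bounded-index abelian subgroup once the ruling becomes non-trivial. The other cases reduce cleanly to the key lemmas of Section~\ref{sec: pre} and to the Jordan property for $\Aut$ of projective varieties developed later, whereas this case is the genuine dividing line between Jordan and non-Jordan behaviour identified by the theorem.
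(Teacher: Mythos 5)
The paper itself gives no proof of this statement; it is quoted from Popov \cite[\S 2.2]{Pop11} for the ``if'' direction and Zarhin \cite{Zar14, Zar15} for the ``only if'' direction, and your overall architecture --- Zarhin's Heisenberg/theta-group construction on $\PP^1\times E$ for the negative direction, and a case division by Kodaira dimension for the positive one --- matches that literature. The cases $\kappa(X)\ge 0$ (unique minimal model, $\Bir(X)=\Aut(X_{\min})$, then Matsumura resp.\ Theorem \ref{thm: automorphism projective}), $X$ rational (Serre), and $X$ birational to $C\times\PP^1$ with $g(C)\ge 2$ (Lemma \ref{lem: pre lemma 1}(1) applied to $1\to K\to\Bir(X)\to\Bir(X)|_C\to 1$ with $K\le\PGL_2(k(C))$ Jordan and $\Bir(X)|_C\le\Aut(C)$ finite) are all handled correctly.

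The genuine gap is your ``decisive case.'' Over an algebraically closed field of characteristic zero --- the setting of the cited results --- there is no surface that is ruled over an elliptic curve $E$ yet not birational to $\PP^1\times E$: by Tsen's theorem (equivalently, the Noether--Enriques theorem) the generic fibre of any ruling over $E$ is $\PP^1_{k(E)}$, so every surface ruled over $E$ is birational to $E\times\PP^1$. The dividing line in the theorem is therefore between elliptic base and non-elliptic base, not between trivial and non-trivial rulings over $E$; the case you single out as the heart of the proof is vacuous, and the hypothesis already excludes the entire elliptic-base branch of the classification. Your proposed treatment of that case (``translations lifting to $\Bir(X)$ lie in a sublattice of bounded index,'' ``$K$ must commute with the monodromy data of the twist'') is in any event a programme rather than a proof, and it rests on this misconception. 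Once the phantom case is deleted, your case analysis is complete as it stands, and the substance of the theorem reduces to exactly the two inputs you cite: Zarhin's construction showing $\Bir(\PP^1\times E)$ is not Jordan, and the Jordan property in the remaining Kodaira classes.
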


\begin{theorem}\label{thm: bir 3fold}
    Let $X$ be an algebraic threefold.
    Then $\Bir(X)$ is Jordan if and only if $X$ is neither birational to $E\times \mathbb{P}^2$ with $E$ an elliptic curve, nor to $S\times \mathbb{P}^1$ with $S$ being one of the following:
    \begin{itemize}
        \item an abelian surface;
        \item a bielliptic surface;
        \item a surface of Kodaira dimension 1 such that the Jacobian fibration of the pluricanonical fibration is Zariski locally trivial.
    \end{itemize}
\end{theorem}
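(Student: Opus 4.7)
The plan is to establish the theorem by combining Minimal Model Program reduction with an explicit Heisenberg-type construction of non-Jordan subgroups in the spirit of Zarhin. I treat the two implications separately.

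\emph{The ``only if'' direction.} Since $\Bir$ is a birational invariant, I will take $X$ to be the literal product in each exceptional case. For $X = E \times \mathbb{P}^2$ the factorization $X \sim_{\mathrm{bir}} (E \times \mathbb{P}^1) \times \mathbb{P}^1$ embeds $\Bir(E \times \mathbb{P}^1)$ into $\Bir(X)$, and failure of Jordan follows from Theorem \ref{thm: bir surface}. For $X = S \times \mathbb{P}^1$ with $S$ of one of the three listed types, I will produce, for every prime $p$, a finite subgroup $G_p \le \Bir(X)$ fitting into a central extension
\[
1 \to \langle \sigma \rangle \to G_p \to (\mathbb{Z}/p)^{N(p)} \to 1,
\]
with $\sigma \in \Aut(\mathbb{P}^1)$ of order $p$, where $(\mathbb{Z}/p)^{N(p)}$ is realised via $p$-torsion translations along the Albanese map (abelian case), along the bielliptic quotient (bielliptic case), or along fibres of the Jacobian elliptic fibration (properly elliptic case with locally trivial Jacobian). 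Heisenberg groups of this shape admit no abelian subgroup of index less than $p$, so no Jordan constant can bound all $G_p$ simultaneously. The three surface types are singled out precisely as those supporting unboundedly many $p$-torsion translations compatible with a nontrivial $\PGL_2$-cocycle.

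\emph{The ``if'' direction.} Given $X$ not birational to any listed variety, I will run the MMP on a smooth projective model and split the argument by Kodaira dimension. If $\kappa(X) = 3$, then $\Bir(X)$ is finite and trivially Jordan. If $0 \le \kappa(X) \le 2$, I pass to the Iitaka fibration $\varphi \colon X \dashrightarrow Z$, yielding an exact sequence
\[
1 \to K \to \Bir(X) \to \Bir(X)|_{Z} \to 1,
\]
to which Lemma \ref{lem: pre lemma 1}(1) applies. The image lies in $\Bir(Z)$, which is Jordan by Theorem \ref{thm: bir surface} (when $\dim Z = 2$) or trivially (when $\dim Z \le 1$). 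The kernel $K$ acts birationally on the generic fibre $X_\eta$, a curve of genus $\le 1$ or a minimal smooth surface with $\kappa = 0$; its finite subgroups are bounded via Minkowski's theorem (Lemma \ref{lem: pre lemma 2}(2)) together with the rank estimates in Lemma \ref{lem: pre lemma 2}(3) and (5). If $\kappa(X) = -\infty$, the variety is uniruled; I pass to a Mori fibre space $X' \to Y$ and consider the three subcases $\dim Y \in \{0,1,2\}$. When $\dim Y = 0$, $X'$ is a Fano threefold in a bounded family by Birkar's BAB theorem, hence $\Aut(X')$ sits uniformly in some $\PGL_N$ and Jordan follows from Lemma \ref{lem: pre lemma 2}(1). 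When $\dim Y \ge 1$, an analogous exact sequence reduces Jordan on $\Bir(X)$ to Jordan on $\Bir(Y)$ (known) and a bound on the kernel, which acts on a lower-dimensional Fano fibre over $k(Y)$.

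\emph{The main obstacle.} The hardest case will be the conic-bundle regime $\dim Y = 2$. Here I must precisely characterise which surfaces $Y$ admit a $\PGL_2$-twist over $k(Y)$ that combines with finite $p$-subgroups of $\Bir(Y)$ to yield unbounded Heisenberg subgroups in $\Bir(Y \times \mathbb{P}^1)$. Matching this characterisation to the three listed surface classes requires, on one hand, the explicit constructions above, and on the other hand a uniform Jordan bound for all remaining $Y$; the latter rests on controlling the $p$-torsion sections of the Jacobian fibration $Y \to C$, and it is precisely the surfaces failing to have finitely many such sections (the abelian, bielliptic, and properly elliptic with locally trivial Jacobian cases) that escape the bound. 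This dichotomy between finite and infinite families of torsion sections is the technical heart of the Prokhorov--Shramov classification.
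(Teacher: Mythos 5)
Your ``only if'' half is sound in outline and matches the standard Zarhin/Prokhorov--Shramov constructions (note that the paper itself gives no proof of Theorem \ref{thm: bir 3fold}; it is quoted from \cite{PS18}), though for the bielliptic and properly elliptic cases you still owe the verification that the Heisenberg-type groups you build genuinely act birationally on $S\times\mathbb{P}^1$ and that the locally-trivial-Jacobian hypothesis is exactly what supplies unbounded $p$-torsion (multi)sections. The ``if'' half, however, has two genuine gaps. First, your extension arguments do not close: Lemma \ref{lem: pre lemma 1}(1) requires $\Bd(G_2)<\infty$, not merely that $G_2$ be Jordan, and an extension of two Jordan groups need not be Jordan --- the paper's own example $\Bir(\mathbb{P}^1\times E)$ in Section \ref{sec: pre} is precisely a counterexample to that implication. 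In your $0\le\kappa\le 2$ step, the claim that the kernel over the generic fibre of the Iitaka fibration ``is bounded via Minkowski'' is false: the ground field $k$ is algebraically closed, so for an isotrivial elliptic or abelian fibration the kernel contains translations by all of $E_0(k)[n]$, which is unbounded; Minkowski's theorem applies only to $\GL_n(K)$ with $K$ finitely generated over $\mathbb{Q}$. The correct route for $\kappa(X)\ge 0$ is simply Theorem \ref{thm: bir nu jordan} (non-uniruled implies Jordan), proved via quasi-minimal models and the exact sequence $(*)$, which sidesteps the Iitaka fibration entirely and is already available in the paper.

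Second, in the uniruled case with $\dim Y\ge 1$ you have only restated the problem. The kernel of $\Bir(X)\to\Bir(X)|_Y$ is Jordan by Theorem \ref{thm: bir jordan} but has unbounded finite subgroups (already $\PGL_2(k(Y))$ for a trivial conic bundle contains $(\mathbb{Z}/n)^2$ for all $n$), and the image can also have unbounded finite subgroups (torsion translations when $Y$ is abelian, bielliptic, or carries a locally trivial Jacobian fibration); so neither clause of Lemma \ref{lem: pre lemma 1} applies. Proving that the combination is nonetheless Jordan for every $Y$ outside the three listed classes --- including conic bundles over K3, Enriques and general-type surfaces, non-trivial conic bundles over the exceptional surfaces themselves, and del Pezzo fibrations over curves --- is the actual content of \cite{PS18}, and it requires the $G$-equivariant MMP together with a finiteness analysis of torsion in the relevant Mordell--Weil groups. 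Your closing paragraph correctly identifies this as the heart of the matter but supplies no argument for it, so as written the proposal establishes only the easy direction.
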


\begin{remark}
    If one considers the solvably (resp. nilpotently) Jordan property which replaces the abelian subgroup by the solvably (resp. nilpotent) subgroup, then for complex algebraic variety $X$, the group $\Bir(X)$ is always solvably (resp. nilpotently) Jordan (cf. \cite{PS14, PS16, Gul20}).
\end{remark}

In higher dimensions, with the help of the minimal model program (cf.~\cite{BCHM10}), 
Prokhorov and Shramov \cite[Theorem 1.8]{PS14} (see also \cite[Theorem 1.8]{PS16}) confirmed the Jordan property of the group $\Bir(X)$ for any algebraic variety $X$, assuming that either $X$ is non-uniruled
or $X$ has vanishing irregularity as well as the (then) outstanding
Borisov-Alexeev-Borisov conjecture about the boundedness of terminal Fano varieties
which has now been affirmatively confirmed by Birkar \cite[Theorem 1.1]{Bir21}.
In particular, the result applies when $X$ is rationally connected (i.e., any two points are connected by a rational curve).

We list these results as follows and sketch their ideas.
Recall that, for an algebraic variety $X$, we define 
$$\widetilde{q}(X)=h^1(X',\mathcal{O}_{X'})$$
where $X'$ is any smooth projective birational model of $X$.

\begin{theorem}[{\cite[Theorem 1.8]{PS14} and \cite[Corollary 1.5]{Bir21}}]\label{thm: bir jordan}
    Let $X$ be a rationally connected algebraic variety. 
    Then $\Bir(X)$ is Jordan with the Jordan constant depending only on $\dim X$. 
\end{theorem}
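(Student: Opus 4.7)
The plan is to reduce a finite subgroup $G \le \Bir(X)$ to a biregular action on a Fano variety of bounded type, and then invoke Jordan's classical theorem for $\PGL_N(k)$. The machinery is the $G$-equivariant Minimal Model Program combined with Birkar's resolution \cite{Bir21} of the Borisov--Alexeev--Borisov conjecture.

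Given a finite $G \le \Bir(X)$, I would first carry out an equivariant resolution of singularities and a regularization of the birational $G$-action (by passing to the closure of the graphs of the elements of $G$ in a suitable product and equivariantly desingularizing), producing a smooth projective variety $X'$ that is $G$-equivariantly birational to $X$ and on which $G$ acts biregularly. Since rational connectedness is a birational invariant, $X'$ is in particular uniruled. A $G$-equivariant MMP, available in characteristic zero via \cite{BCHM10}, then terminates in a $G$-equivariant Mori fiber space $\pi \colon Y \to Z$, where $Y$ is $\mathbb{Q}$-factorial with terminal singularities and relative $G$-invariant Picard rank one.

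I would proceed by induction on $n = \dim X$. In the base case $\dim Z = 0$, the output $Y$ is a $\mathbb{Q}$-factorial terminal Fano variety of dimension $n$; by Birkar's theorem such $Y$ form a bounded family, so some uniform multiple $-mK_Y$ is very ample with $h^0(Y,-mK_Y) \le N+1$ for constants $m = m(n)$ and $N = N(n)$. The resulting embedding $Y \hookrightarrow \mathbb{P}^N$ is canonical, hence $\Aut(Y)$-equivariant, so $\Aut(Y) \hookrightarrow \PGL_{N+1}(k)$ and Lemma \ref{lem: pre lemma 2}(1) bounds $\J(G)$ in terms of $n$. When $\dim Z > 0$, the base $Z$ is rationally connected of strictly smaller dimension (by Graber--Harris--Starr), and the kernel $G_1$ of the natural homomorphism $G \to G_2 \le \Bir(Z)$ acts faithfully on a general closed fiber of $\pi$ (any element acting trivially on a dense open set of $Y$ is the identity), which is itself a Fano variety of dimension less than $n$ lying in a bounded family.

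The main obstacle, flagged in the example preceding Lemma \ref{lem: pre lemma 2}, is that extensions of Jordan groups need not be Jordan, so the naive combination of Jordan bounds for $G_1$ and $G_2$ via Lemma \ref{lem: pre lemma 1} does not close the induction. The remedy, following Prokhorov--Shramov, is to sharpen the $G$-equivariant MMP so that its endpoint lies in a single bounded family of $G$-Fano (or $G$-Fano type) varieties of Picard rank one---either by iterating MMP along the base $Z$ (also rationally connected) or by enlarging the class of admissible outputs to Fano-type varieties, whose boundedness again rests on \cite{Bir21}. Arranging matters so that the endpoint genuinely embeds into a fixed $\PGL_{N(n)+1}(k)$ is the principal technical burden, and it is precisely here that Birkar's boundedness is used in its most decisive form.
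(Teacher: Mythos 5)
Your overall architecture (regularization of the action, $G$-equivariant MMP to a $G$-Mori fiber space $\pi\colon Y\to Z$, BAB boundedness, Jordan's theorem for $\PGL_{N+1}(k)$) is the same as in the Prokhorov--Shramov proof the paper sketches, and your base case $\dim Z=0$ is correct. But you have accurately identified the crux --- that the extension $1\to G_1\to G\to G_2\to 1$ over a positive-dimensional base cannot be closed by combining Jordan constants --- and then not supplied the idea that resolves it. Neither of your proposed remedies works: the $G$-MMP of a rationally connected variety genuinely can terminate in a Mori fiber space over a positive-dimensional base (already for rational surfaces one obtains $G$-conic bundles over $\mathbb{P}^1$), and such outputs are in general not of Fano type, so one cannot force the endpoint into a single bounded family of Picard-rank-one Fanos. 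Declaring this step ``the principal technical burden'' leaves the proof with a genuine gap exactly where the difficulty lies.

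The missing ingredient, which the paper states explicitly, is Popov's fixed-point strategy: rather than propagating the Jordan property through the tower of fibrations, one proves the stronger statement that any finite group $G$ acting on a terminal rationally connected $n$-fold contains a subgroup $H$ of index at most $C(n)$ having a fixed point. Unlike Jordanness, this property passes cleanly through the Mori-fiber-space induction: find a fixed point $z\in Z$ for a bounded-index subgroup (induction on dimension), restrict to the invariant fiber $Y_z$ --- a terminal Fano of smaller dimension, lying in a bounded family by \cite[Corollary 1.5]{Bir21} --- and produce a fixed point there via the equivariant embedding $Y_z\hookrightarrow\mathbb{P}^{N(n)}$ and the structure of finite subgroups of $\PGL_{N+1}(k)$. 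Jordan's classical theorem is then applied only once, at the very end: in characteristic zero a finite group fixing a point acts faithfully on the Zariski tangent space $\mathfrak{m}_x/\mathfrak{m}_x^2$ at that point, whose dimension is bounded in terms of $n$ because the relevant varieties sit in a fixed $\mathbb{P}^{N(n)}$; hence $H\hookrightarrow\GL_m(k)$ with $m=m(n)$ and $\J(G)\le [G:H]\cdot\Con_m$. Without this linearization at a fixed point, your induction does not close.
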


To prove Theorem \ref{thm: bir jordan}, it suffices to show that for any finite group $G$ (faithfully) acting on a terminal rationally connected variety $X$, there is a subgroup $H\le G$ with index $[G:H]\le C$, where $C$ depends only on $\dim X$, such that $H$ has a fixed point on $X$.
Such idea was first introduced by Popov in \cite{Pop14}.
Further reduction argument allows us to focus only on terminal Fano $X$.
This way, there exists some (minimal) $m>0$ such that the anti-pluri-canonical divisor $-mK_X$ is a very ample Cartier divisor and defines an $G$-equivariant embedding $X\hookrightarrow \mathbb{P}^{N}$ where $N=h^0(X, -mK_X) - 1$.
So $G\le \PGL(N+1)$.
Finally, the Borisov-Alexeev-Borisov conjecture, proved by Birkar, tells that such $N$ is upper bounded by a function in $\dim X$.

\begin{remark}
    In particular, all Cremona groups ${\rm Cr}_n(k)=\Bir(\mathbb{P}^n_k)$  have the Jordan property, confirming a conjecture of Serre.
    Here, we list some known results for the Jordan constants.
    \begin{enumerate}
        \item Yasinsky \cite[Theorems 1.9 and 1.10]{Yas17} showed that the normal Jordan constant $\JN({\rm Cr}_2(k))=7200$ when $k$ is algebraically closed of characteristic $0$ and $\JN({\rm Cr}_2(\mathbb{R}))=\JN({\rm Cr}_2(\mathbb{Q}))=120$.
        \item Zaitsev \cite[Theorem 1.3 and Corollary 1.4]{Zai24} fully classified $\JN({\rm Cr}_2(k))$ for any $k$ of characteristic $0$ and there are exactly three values $7200, 168, 120$.
        \item Prokhorov and Shramov \cite{PS23} showed that $\JN({\rm Cr}_2(\mathbb{F}_q))=|\PGL_3(\mathbb{F}_q)|=q^3\cdot(q^2 - 1)\cdot(q^3 - 1) $ when $q\not\in \{2,4,8\}$. 
        Later, Vikulova \cite{Vik23} showed that this is true when $q\neq 2$ and the only exceptional case is $\JN({\rm Cr}_2(\mathbb{F}_2))=|S_6|=720>|\PGL_3(\mathbb{F}_2)|$.
        \item Prokhorov and Shramov \cite[Theorem 1.2.4]{PS17} gave a uniform upper bound $\JN(\Bir(X))\le 107495424$ for any rationally connected threefold $X$.
    \end{enumerate}
\end{remark}

We reorganize the strategy in \cite{PS14} and provide a slightly streamlined proof for the following theorem. 
\begin{theorem}[{cf.~\cite[Theorem 1.8]{PS14}}]\label{thm: bir nu jordan}
    Let $X$ be a non-uniruled variety. 
    Then $\Bir(X)$ is Jordan.
\end{theorem}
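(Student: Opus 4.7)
The plan is to combine the minimal model program with the equivariant Iitaka fibration and Jordan's classical theorem, producing a tower of exact sequences that collapses via Lemmas \ref{lem: pre lemma 1} and \ref{lem: pre lemma 2}. Since $\Bir(X)$ is a birational invariant, I first replace $X$ by a smooth projective model, preserving non-uniruledness. Using finite generation of the canonical ring \cite{BCHM10} together with the non-uniruled hypothesis, one obtains the canonical model $X_{\can}=\Proj R(X,K_X)$ of dimension $\kappa(X)\ge 0$.

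The Iitaka fibration $\phi\colon X\dashrightarrow Z$ (with $Z$ birational to $X_{\can}$) is canonically attached and hence $\Bir(X)$-equivariant, yielding
$$1\to K\to \Bir(X)\to H\to 1,$$
with $H\subseteq \Bir(Z)$ and $K$ the subgroup preserving the fibration fiberwise. Since $Z$ is of general type, a theorem of Matsumura--Hanamura forces $\Bir(Z)$, and hence $H$, to be \emph{finite}; by Lemma \ref{lem: pre lemma 1}(1) it suffices to show that $K$ is Jordan. I would handle $K$ by induction on $\dim X$ via the Albanese of a general fiber $F$ of $\phi$: since $F$ is non-uniruled with $\kappa(F)=0$, Kawamata's theorem makes $\alb_F\colon F\to \Alb(F)$ surjective with connected Kodaira-dimension-zero fibers, and its $K$-equivariance produces $1\to K'\to K\to L\to 1$. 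The image $L$ lies in $\Aut(\Alb(F))$; any finite subgroup of $\Aut(\Alb(F))$ has its intersection with the translation group $\Alb(F)$ as an abelian normal subgroup of bounded index (by Minkowski applied to $\GL_{2\dim\Alb(F)}(\mathbb{Z})$, Lemma \ref{lem: pre lemma 2}(2), together with Lemma \ref{lem: pre lemma 2}(5)), so $L$ is Jordan. The kernel $K'$ acts trivially on $\Alb(F)$ and factors through the pluricanonical representation on $H^0(mK_F)$ for suitable $m$, landing in some $\GL_N$ and hence Jordan by Lemma \ref{lem: pre lemma 2}(1). Lemma \ref{lem: pre lemma 1}(6) then closes the argument for $K$.

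The main obstacle is to ensure that the canonical constructions — Iitaka fibration, Albanese, pluricanonical representation — descend compatibly so as to produce $\Bir(X)$-equivariant exact sequences with well-controlled kernels (in particular dealing with birational versus biregular issues and the passage to suitable smooth models at each step), and that the induction on dimension terminates without losing the Jordan property across iterated extensions. The non-uniruled hypothesis is indispensable here: it rules out rational fibrations whose relative birational groups such as $\PGL_2(k(C))$ can fail to be Jordan, as witnessed by the $\PP^1\times E$ example in Section \ref{sec: pre}.
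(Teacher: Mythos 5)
Your proposal follows a genuinely different route from the paper (which reduces to a quasi-minimal model, shows $\Bir(X)=\textup{PAut}(X)$ there, and splits off the N\'eron--Severi representation so that the kernel consists of honest automorphisms by the negativity lemma, Lemma \ref{lem: paut}, whence Theorem \ref{thm: automorphism projective} applies), but as written it has two genuine gaps. First, the base $Z$ of the Iitaka fibration is \emph{not} of general type in general: for an elliptic surface $X\to\mathbb{P}^1$ with $\kappa(X)=1$ the base is $\mathbb{P}^1$, and for $\kappa(X)=0$ the base is a point. So the appeal to finiteness of $\Bir(Z)$ for general type varieties does not apply. The image $H$ of $\Bir(X)$ in $\Bir(Z)$ is indeed finite, but for a different and substantive reason --- the finiteness of the pluricanonical representation $\Bir(X)\to\GL(H^0(X,mK_X))$ (Ueno's conjecture, proved by Deligne/Nakamura--Ueno) --- which you would need to invoke explicitly.

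Second, and more seriously, your treatment of the kernel $K$ collapses exactly where the theorem is hardest. When $\kappa(X)=0$ and $\widetilde{q}(X)=0$ (say $X$ a Calabi--Yau or hyperk\"ahler manifold), $Z$ is a point, so $K=\Bir(X)$; the fiber $F$ is $X$ itself, $\Alb(F)$ is a point, so $L$ is trivial and $K'=K$; and $h^0(mK_F)\le 1$, so the pluricanonical representation lands in $\GL_1$ with essentially all of $K'$ in its kernel. Saying that $K'$ ``factors through'' a representation into some $\GL_N$ does not make $K'$ Jordan --- you need the map to be injective or to have a controlled kernel, and controlling that kernel is precisely the content of the theorem in this case. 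The same degeneration occurs at every stage of your induction where the relevant irregularity vanishes, so the tower of canonical fibrations (Iitaka, Albanese, pluricanonical) gives no information on an open class of non-uniruled varieties. This is exactly the case the paper's argument is designed to handle: on a quasi-minimal model the kernel of the action on $\NS(X)$ preserves an ample class up to numerical equivalence, hence is a group of automorphisms, and one concludes from the Jordan property of $\Aut(X)$ rather than from any fibration.
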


\begin{proof}
    A projective variety $Y$ is {\it quasi-minimal} if $Y$ has terminal singularities and the canonical divisor $K_Y$ is movable (cf.~\cite[Definition 4.2]{PS14}).
    Note that $X$ admits a quasi-minimal model by simply running a minimal model program till no divisorial contraction occurs; see the last part of the proof of \cite[Lemma 4.4]{PS14} and take the group $\Gamma$ there to be the trivial group.
    So far, we do not need to consider any finite group action or run any equivariant minimal model program as in \cite[Lemma 4.4]{PS14} and its proof.
    
    Now we may assume that $X$ is quasi-minimal and $\Q$-factorial.
    Denote by $$\textup{PAut}(X):=\{g\in \Bir(X)\,|\, g \textup{ is isomorphic in codimension one}\}$$
    the group of pseudo-automorphisms.
    It is known that $\Bir(X)=\textup{PAut}(X)$, see \cite[Proposition 4.6]{PS14}.
    Consider the exact sequence
    $$(*) \hskip 1pc
    1\to G\to \textup{PAut}(X)\to \textup{PAut}(X)|_{\NS(X)}\to 1$$
    where the restriction $\textup{PAut}(X)|_{\NS(X)}$ is the image of $\textup{PAut}(X)\to \GL(\NS(X))$ induced by the pullback action, and it makes sense because the small birational maps do not affect divisors.
    Now the key observation is that $G\le \Aut(X)$ by noting that any $g\in G$ satisfies $g^*A\equiv A$ for an ample divisor $A$ and the negativity lemma, see \cite[Proposition 2.1]{JM24a} for a more generalized setting.
    Here, we provide a simple algebraic version in Lemma \ref{lem: paut}.
    
    We shall see that $\Aut(X)$ is Jordan by Theorem \ref{thm: automorphism projective}.
    So the theorem is proved by noting that $\textup{PAut}(X)|_{\NS(X)}\le \GL(\NS(X))$ has bounded finite subgroups.
 \end{proof}

\begin{lemma}\label{lem: paut}
    Let $f\in \textup{PAut}(X)$ with $X$ being a $\mathbb{Q}$-factorial normal projective variety over an algebraically closed field $k$ of arbitrary characteristic.
    Suppose $f^*A\equiv A$ for some ample divisor $A$.
    Then $f\in \Aut(X)$.
\end{lemma}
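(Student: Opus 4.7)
The plan is to reduce the problem to proving a numerical equality $p^*A \equiv q^*A$ on a resolution of indeterminacy of $f$, and then to invoke the rigidity lemma. First, I would take a resolution $p, q\colon W \to X$ with $W$ a normal projective variety and $q = f\circ p$ as rational maps. Since $f\in \textup{PAut}(X)$, the indeterminacy loci of $f$ and $f^{-1}$ have codimension $\ge 2$, so every $p$-exceptional prime divisor is also $q$-exceptional, and vice versa.

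The key step is to establish $p^*A \equiv q^*A$ in $\N^1(W)$, where the $\Q$-factoriality of $X$ guarantees that the pullbacks of $A$ and of the Weil divisor $f^*A := p_*(q^*A)$ are well-defined. Setting $D := q^*A - p^*A$, one has $p_*D = f^*A - A \equiv 0$, and the non-exceptional component of $D$ is numerically trivial (being pulled back from the numerically trivial class $f^*A - A$ on $X$). Hence, as a numerical class, $D$ is represented by a $p$-exceptional divisor. For any curve $C$ contracted by $p$, ampleness of $A$ gives
\[ D\cdot C \;=\; q^*A\cdot C \;=\; A\cdot q_*C \;\ge\; 0. \]
The negativity lemma applied to $p$ then yields that $-D$ is effective modulo numerical equivalence. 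Since $(f^{-1})^*A \equiv A$ follows directly from $f^*A \equiv A$, swapping the roles of $p$ and $q$ in the symmetric argument forces $D$ itself to be effective modulo numerical equivalence. Combining the two conclusions gives $D \equiv 0$.

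Finally, rigidity completes the proof. If $C\subset W$ is any curve contracted by $p$, then $q^*A\cdot C = p^*A\cdot C = 0$, so $A\cdot q_*C = 0$, and ampleness forces $q_*C = 0$; that is, $C$ is also contracted by $q$. Thus $q$ is constant on every connected fiber of $p$, and the rigidity lemma provides a morphism $g\colon X\to X$ with $q = g\circ p$. This yields $f = g$ as rational maps, so $f$ extends to a morphism of $X$; applying the symmetric argument to $f^{-1}$ gives $f\in \Aut(X)$. The main obstacle I anticipate is the careful two-sided application of the negativity lemma: one must verify that $D$ is represented numerically by an exceptional divisor over $X$ from \emph{both} the $p$-side and the $q$-side so that the resulting effectivity statements have compatible signs. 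The characteristic assumption plays no role, since both the negativity lemma and the rigidity lemma hold for proper morphisms of normal varieties in arbitrary characteristic.
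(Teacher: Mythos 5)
Your proposal is correct and follows essentially the same route as the paper: resolve $f$ by a common model, use the negativity lemma to show the two pullbacks of $A$ agree (numerically), and then conclude via the rigidity lemma that $f$ and $f^{-1}$ are morphisms. The only cosmetic difference is that the paper works on the normalized graph and obtains the exact equality $p_1^*f^*A=p_2^*A$ from one application of negativity plus the effectivity of $p_1^*{p_1}_*p_2^*A-p_2^*A$, whereas you apply negativity symmetrically from both sides to get $p^*A\equiv q^*A$; both versions suffice for the final intersection computation.
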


\begin{proof}
    Let $Y$ be the normalization of the graph of $f$.
    Denote by $p_i:Y\to X$ the two induced projections for $i=1,2$.
    Since $f$ is isomorphic in codimension $1$, $p_1$ and $p_2$ share the same exceptional divisors.
    Note that $p_i$ are birational and $f^*A={p_1}_*p_2^*A\equiv A$.
    
    We claim that ${p_1}^*f^*A=p_2^*A$.
    Note that $F:=p_1^*{p_1}_*p_2^*A-p_2^*A$ is an effective $p_1$-exceptional (and hence $p_2$-exceptional) divisor.
    By the assumption, $F\equiv p_1^*A-p_2^*A$ is $p_2$-nef.
    By the negativity lemma (cf.~\cite[Lemma 3.39]{KM98}), $-F$ is effective.
    So $F=0$ and the claim is proved.

    Let $C\subseteq Y$ be any curve such that $p_1(C)$ is a point.
    Then $$A\cdot {p_2}_*C=p_2^*A\cdot C=p_1^*f^*A\cdot C=0$$ by the projection formula.
    Therefore, $p_2(C)$ is a point.
    By the rigidity lemma (cf.~\cite[Lemma 1.15]{Deb01}), $f$ is well-defined.
    Similarly, $f^{-1}$ is well-defined.
\end{proof}

\begin{remark}\label{rmk: nu q bounded}
    In the proof of Theorem \ref{thm: bir nu jordan}, we actually have $G\le \Aut_{[L]}(X)$ (a finite extension of $\Aut_0(X)$ by a classical result due to Fujiki~\cite{Fuj78} and Lieberman~\cite{Lie78} independently) for any ample divisor $L$. If the quasi-minimal $X$ further has $\widetilde{q}(X)=0$, then $\Aut_0(X)$ is trivial by \cite{Fuj78} and \cite{Lie78} and hence $G$ is finite.
    In particular, $\Bir(X)$ has bounded finite subgroups (i.e., $\Bd(\Bir(X))<\infty$) when $X$ is non-uniruled and $\widetilde{q}(X)=0$.
 \end{remark}

In general, we make use of the maximal rationally connected fibration $\varphi:X\dashrightarrow Y$ where $Y$ is non-uniruled and the geometric generic fibre $X_{\overline{k(Y)}}$ of $\varphi$ is rationally connected.
Note that $\Bir(X)$ descends to $Y$ and we have the following exact sequence
     $$1\to G\to \Bir(X)\to \Bir(X)|_Y\to 1$$
where $\Bir(X)|_Y$ is the image of $\Bir(X)\to \Bir(Y)$ and $G\le \Bir(X_{\overline{k(Y)}})$ is Jordan by Theorem \ref{thm: bir jordan}.
Suppose further $\widetilde{q}(X)=0$.
Then $\widetilde{q}(Y)=0$ and hence $\Bir(X)|_Y\le \Bir(Y)$ has bounded finite subgroups by Remark \ref{rmk: nu q bounded}.
Now Lemma \ref{lem: pre lemma 1} implies the following:

\begin{theorem}[{cf.~\cite[Theorem 1.8]{PS14}}]\label{thm: bir q jordan}
    Let $X$ be an algebraic variety with $\widetilde{q}(X)=0$. 
    Then $\Bir(X)$ is Jordan. 
\end{theorem}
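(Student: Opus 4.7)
The plan is to reduce the problem, via the maximal rationally connected (MRC) fibration, to the two Jordan results already in hand: Theorem \ref{thm: bir jordan} for the rationally connected case, and Remark \ref{rmk: nu q bounded} for the non-uniruled case with vanishing irregularity.

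After replacing $X$ by a smooth projective birational model (so that $\Bir(X)$ is unchanged and $\widetilde{q}(X)=h^1(X,\OO_X)=0$), I would take an MRC fibration $\varphi: X \dashrightarrow Y$. By construction, $Y$ is non-uniruled and the geometric generic fibre $F:=X_{\overline{k(Y)}}$ is rationally connected. Because the MRC quotient is essentially unique (up to birational equivalence), every element of $\Bir(X)$ sends MRC fibres to MRC fibres and hence descends to a birational self-map of $Y$. This yields the short exact sequence
$$1 \to G \to \Bir(X) \to \Bir(X)|_Y \to 1,$$
in which $\Bir(X)|_Y$ is the image of the induced homomorphism $\Bir(X)\to \Bir(Y)$ and the kernel $G$ embeds into $\Bir(F)$.

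I would then dispose of the two factors separately. For the kernel, since $F$ is rationally connected over the algebraically closed field $\overline{k(Y)}$, Theorem \ref{thm: bir jordan} shows that $\Bir(F)$, and hence $G$, is Jordan with Jordan constant depending only on $\dim F\le \dim X$. For the quotient, I would verify that $\widetilde{q}(Y)=0$: since rationally connected varieties have trivial Albanese, the Albanese map of $X$ factors through $Y$, giving $\Alb(X)\cong \Alb(Y)$ and consequently $\widetilde{q}(Y)\le \widetilde{q}(X)=0$. As $Y$ is non-uniruled with $\widetilde{q}(Y)=0$, Remark \ref{rmk: nu q bounded} yields $\Bd(\Bir(Y))<\infty$, and therefore $\Bd(\Bir(X)|_Y)<\infty$. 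Combining these two inputs with Lemma \ref{lem: pre lemma 1}(1) produces
$$\J(\Bir(X)) \le \Bd(\Bir(X)|_Y) \cdot \J(G) < \infty,$$
which is exactly what we want.

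The main obstacle, and really the only subtle point, is justifying that all of $\Bir(X)$ descends to $\Bir(Y)$ along the MRC fibration and that $\widetilde{q}$ is preserved in passing from $X$ to $Y$. Both rely on the uniqueness (up to birational equivalence) of the MRC quotient and the triviality of the Albanese of a rationally connected variety; neither is deep, but they are the linchpins that allow Theorem \ref{thm: bir jordan} and Remark \ref{rmk: nu q bounded} to be assembled into a proof in full generality.
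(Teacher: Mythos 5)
Your proposal is correct and follows essentially the same route as the paper: the MRC fibration, the exact sequence with kernel in $\Bir(X_{\overline{k(Y)}})$ handled by Theorem \ref{thm: bir jordan}, the quotient handled by Remark \ref{rmk: nu q bounded} via $\widetilde{q}(Y)=0$, and the conclusion via Lemma \ref{lem: pre lemma 1}(1). The extra justifications you supply (functoriality of the MRC quotient and the Albanese argument for $\widetilde{q}(Y)=0$) are exactly the points the paper leaves implicit.
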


\begin{remark}
By Lemma \ref{lem: pre lemma 1}, $\Rk$ is subadditive under group extension. 

Thus one can show that $\Rk(\Bir(X))<\infty$ for any algebraic variety by the previous discussion.

Indeed, via the exact sequence induced by the maximal rationally connected fibration, it suffices to show $\Rk(\Bir(X))<\infty$ when $X$ is quasi-minimal and when $X$ is rationally connected.

When $X$ is quasi-minimal, $\Rk(\Aut(X))<\infty$ and $\Rk(\textup{PAut}(X)|_{\NS(X)})<\infty$ by Theorem \ref{thm: automorphism projective} and Lemma \ref{lem: pre lemma 2}, so $\textup{PAut}(X)$ (and
$\Aut(X)$) have $\Rk$ being finite by the exact sequence $(*)$ in Theorem \ref{thm: bir nu jordan} and the subadditivity of $\Rk$.

When $X$ is rationally connected, by the same $G$-MMP strategy as in the proof of \cite[Lemma 7.6]{PS14}, it suffices to show $\Rk(\Bir(X))<\infty$ when $X$ is terminal Fano. As we have discussed, there exists some $N(n)>0$ such that for any finite group $G\le \Aut(X)$ with $X$ being $n$-dimensional terminal Fano, $G\le \PGL_{N(n)}(k)$.
So $\Rk(\Bir(X))\le \Rk(\PGL_{N(n)}(k))<\infty$ (cf.~Lemma \ref{lem: pre lemma 2}).
Tracing the argument, one sees that $\Rk(\Bir(X))$ is bounded for all $n$-dimensional rationally connected $X$.
\end{remark}

We shall see the progress on positive characteristics in Section \ref{sec: pos}.
\section{Automorphism groups in characteristic 0} \label{sec: aut}
In this section, we always work over fields of characteristic $0$.
The second and third authors proved that $\Aut(X)$ is Jordan when $X$ is a projective variety, which answers the projective case of \cite[\S2, Problem A]{Pop14}.

\begin{theorem}[{\cite[Theorem~1.6]{MZ18}}]\label{thm: automorphism projective}
Let $X$ be a projective variety. Then $\Aut(X)$ is a Jordan group and $\Rk(\Aut(X))<\infty$.
\end{theorem}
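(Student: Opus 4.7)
The plan is to reduce to the identity component $\Aut_0(X)$ and then exploit the Chevalley structure theorem for connected algebraic groups in characteristic zero. By Lemma~\ref{lem: pre lemma 3}, every finite subgroup $G \le \Aut(X)$ satisfies $[G : G \cap \Aut_0(X)] \le \ell$ for some $\ell = \ell(X)$. Hence, via Lemma~\ref{lem: pre lemma 1}(1) applied to the finite extension $1 \to G \cap \Aut_0(X) \to G \to G/(G \cap \Aut_0(X)) \to 1$, it suffices to prove the Jordan property and finiteness of $\Rk$ for $\Aut_0(X)$; the rank estimate transfers similarly via Lemma~\ref{lem: pre lemma 1}(5) together with $\Rk(G/(G \cap \Aut_0(X))) \le \log_2 \ell$. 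In characteristic zero the connected algebraic group $\Aut_0(X)$ is smooth, so Chevalley's theorem yields
$$1 \to L \to \Aut_0(X) \to A \to 1,$$
where $L$ is a connected linear algebraic group and $A$ is an abelian variety.

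The rank bound is then immediate. Subadditivity (Lemma~\ref{lem: pre lemma 1}(5)) gives $\Rk(\Aut_0(X)) \le \Rk(L) + \Rk(A)$. Embedding $L \hookrightarrow \GL_n(k)$ for some $n$ and applying Lemma~\ref{lem: pre lemma 2}(4) yields $\Rk(L) \le n$, while Lemma~\ref{lem: pre lemma 2}(5) gives $\Rk(A) = 2\dim A$. Hence $\Rk(\Aut(X)) < \infty$.

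For the Jordan property, fix a finite subgroup $F \le \Aut_0(X)$ and set $F_L := F \cap L$, $\bar F := F/F_L \hookrightarrow A$. Three pieces of data are available: (i) by Lemma~\ref{lem: pre lemma 2}(1), $F_L$ contains an abelian normal subgroup $N \lhd F_L$ of index at most $\J(L)$, and moreover $\Rk(N) \le \Rk(L)$; (ii) $\bar F$ is abelian of rank $\le 2\dim A$ by Lemma~\ref{lem: pre lemma 2}(5); and (iii) since $A$ is commutative, $[F, F] \le F_L$.

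The main obstacle is packaging (i)--(iii) into a single abelian subgroup of $F$ whose index is bounded uniformly in $F$. A direct application of Lemma~\ref{lem: pre lemma 1}(1) or (6) to the Chevalley sequence fails, since both $\Bd(L)$ and $\Bd(A)$ are infinite. To resolve this, I would pass to the $F$-core $N_0 := \bigcap_{f \in F} f N f^{-1}$, which remains abelian and is normal in $F$. The bounded-rank structure of $L$ constrains the $F$-conjugates of $N$ to lie in a controlled family of abelian subgroups of $F_L$ of index $\le \J(L)$ and rank $\le \Rk(L)$, forcing $[F_L : N_0]$ to stay bounded in terms of $\J(L)$ and $\Rk(L)$. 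The quotient $F/N_0$ is then an extension of $\bar F$ (abelian, rank $\le 2\dim A$) by the bounded finite group $F_L/N_0$; Lemma~\ref{lem: pre lemma 1}(6) now applies, with $\Bd(F_L/N_0)$ finite and $\Rk(\bar F)$ bounded, yielding an abelian subgroup of bounded index in $F/N_0$. Pulling back produces the sought abelian subgroup of $F$ of uniformly bounded index, completing the proof.
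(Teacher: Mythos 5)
Your reduction to $\Aut_0(X)$ via Lemma~\ref{lem: pre lemma 3} and Lemma~\ref{lem: pre lemma 1}(1), and your proof of $\Rk(\Aut(X))<\infty$ via the Chevalley sequence $1\to L\to \Aut_0(X)\to A\to 1$, are fine (and for the fixed-$X$ statement the non-uniformity issues of Remark~\ref{nodirect} do not arise). The core argument for $N_0=\bigcap_{f\in F}fNf^{-1}$ is also salvageable: since $F_L$ has an abelian subgroup of index $\le \J(L)$ and rank $\le \Rk(L)$, it is generated by boundedly many elements, so it has boundedly many subgroups of index $\le \J(L)$ and the core has bounded index. The fatal step is the last sentence: ``pulling back produces the sought abelian subgroup of $F$.'' The preimage $H\le F$ of an abelian subgroup of $F/N_0$ is an extension of an abelian group by the abelian normal subgroup $N_0$, hence metabelian but in general \emph{not} abelian, and no bound on the ranks of $N_0$ and $H/N_0$ can rescue this: the Heisenberg groups of order $p^3$ are central extensions of $(\Z/p)^2$ by $\Z/p$ (all ranks $\le 2$) whose maximal abelian subgroups have index $p$. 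This is exactly the theta-group phenomenon that makes $\Bir(\PP^1\times E)$ non-Jordan (Theorem~\ref{thm: bir surface} and Example~2.5), so your argument stops precisely where the real difficulty of the theorem begins.

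What is missing is a structural reason why such Heisenberg configurations cannot occur inside $\Aut_0(X)$, and this is what the paper extracts from Rosenlicht's decomposition $G=G_{\aff}\cdot G_{\ant}$ rather than from the bare Chevalley sequence: here $G_{\aff}=L$ and $G_{\ant}$ is a connected commutative subgroup \emph{central} in $G$ that surjects onto $A$. Because a connected commutative algebraic group in characteristic $0$ is divisible, the finite abelian group $\bar F\le A$ lifts to a finite subgroup $\tilde F$ of the central subgroup $G_{\ant}$; replacing $F$ by the finite group $F\cdot\tilde F=(F\tilde F\cap L)\cdot\tilde F$, the product $N\cdot\tilde F$ of a normal abelian subgroup $N$ of bounded index in $F\tilde F\cap L$ with the central abelian $\tilde F$ is genuinely abelian and of bounded index. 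Making this precise (and uniform) is the role of \cite[Lemma~3.7]{MZ18}, which the survey's sketch in Remark~\ref{nodirect} and the subsequent remark explicitly flags as the crucial ingredient; without some substitute for it, your proposal does not close.
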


The approach in \cite{MZ18} towards Jordan property for the full automorphism group $\Aut(X)$ of a
projective variety $X$ in arbitrary dimension is more algebraic-group theoretical. It does
not use the classification of projective varieties. To be specific, the authors started by proving that every (not necessarily linear) algebraic group has the Jordan property, which is a direct corollary of either one of the following two stronger results.

Let $\mathcal{G}$ be a family of groups. We say $\mathcal{G}$ is {\it uniformly Jordan} if $\{J(G)\}_{G\in \mathcal{G}}$ is upper bounded. 

\begin{theorem}[{\cite[Theorem~1.3]{MZ18}}]\label{thm: automorphism algebraic group 1}
Fix an integer $n \ge 0$.
Let $\mathcal{G}$ be the family of all connected algebraic groups of dimension $n$. Then $\mathcal{G}$ is uniformly Jordan and $\Rk(\mathcal{G})<\infty$.
\end{theorem}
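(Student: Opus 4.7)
The plan is to use Chevalley's structure theorem to reduce to Jordan's classical theorem for linear groups. Over a field of characteristic zero, every $G \in \mathcal{G}$ fits into an exact sequence
$$1 \to L \to G \to A \to 1,$$
with $L$ a connected linear algebraic group of dimension $d_L \le n$ and $A$ an abelian variety of dimension $n - d_L$. The quotient $A$ is commutative, so every finite subgroup of $A$ is abelian; and $L$, by standard structure theory in characteristic zero (torsion-free unipotent radical, reductive quotient isogenous to a product of a torus and a semisimple group, and only finitely many isomorphism classes of semisimple groups of dimension $\le n$, each with a bounded-dimension faithful representation), admits a faithful embedding $L \hookrightarrow \GL_N(k)$ with $N = N(n)$ depending only on $n$.

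For the rank bound, subadditivity (Lemma~\ref{lem: pre lemma 1}(5)) combined with Lemma~\ref{lem: pre lemma 2}(4)--(5) yields
$$\Rk(G) \;\le\; \Rk(L) + \Rk(A) \;\le\; \Rk(\GL_N(k)) + 2\dim A \;\le\; N(n) + 2n,$$
uniformly in $G \in \mathcal{G}$. For the Jordan bound, fix a finite subgroup $H \le G$, set $H_1 = H \cap L$, and observe that $H/H_1 \hookrightarrow A$ is abelian. Jordan's classical theorem (Lemma~\ref{lem: pre lemma 2}(1)) applied to $H_1 \le \GL_N(k)$ yields an abelian $A_1 \le H_1$ with $[H_1 : A_1] \le \Con_N$. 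One then promotes $A_1$ to an $H$-normal subgroup by intersecting its $H$-conjugates in $H_1$: the conjugation action of $H$ on $H_1$ factors through automorphisms of the linear algebraic group $L$, which permute the (finite) set of abelian subgroups of $H_1$ of index $\le \Con_N$ with image of order bounded in terms of $n$. This produces an abelian $A_1^* \triangleleft H$ of index bounded by a function of $n$, after which the abelian quotient $H/A_1^*$ can be combined with $A_1^*$ via Lemma~\ref{lem: pre lemma 1} to give $\J(G) \le F(n)$.

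The main obstacle is the transfer across the Chevalley extension. A direct application of Lemma~\ref{lem: pre lemma 1} fails because $\Bd(A) = \infty$ (abelian varieties carry arbitrarily large finite torsion subgroups), so none of parts (1), (6) yield a finite bound from $\J(L)$ alone; the example of $\Bir(\mathbb{P}^1 \times E)$ from the introduction confirms that extensions of Jordan groups need not be Jordan in general. The resolution must exploit the algebraic-group rigidity of $G$: the inner-automorphism action of $G$ restricts to an action on finite subgroups of bounded rank inside $L$ that factors through a bounded image, and this structural constraint—together with the commutativity of the cokernel $A$—is what allows one to upgrade Jordan for $L$ across the extension. Making this rigidity step quantitative and uniform in $G \in \mathcal{G}$ is the crux of the argument.
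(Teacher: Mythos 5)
Your rank bound is fine: subadditivity of $\Rk$ across the Chevalley sequence (Lemma \ref{lem: pre lemma 1}(5)) together with Lemma \ref{lem: pre lemma 2}(4)--(5) gives $\Rk(G)\le N(n)+2n$; in characteristic $0$ one does not even need a uniform embedding of $L$ itself, since finite subgroups of $L$ inject into the reductive quotient, which embeds in some $\GL_{N(n)}$. The Jordan half, however, has a genuine gap, and it sits exactly where you locate the ``crux''. First, the assertion that the conjugation action of $H$ on the abelian subgroups of $H_1=H\cap L$ of index $\le \Con_N$ has image of order bounded in terms of $n$ is unsupported: the map $H\to\Aut(L)$ contains inner automorphisms, and the number of $H$-conjugates of $A_1$ can be as large as $[H:H_1]$, which is unbounded because the abelian variety $A$ carries unbounded torsion; intersecting the conjugates then destroys the index bound. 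Second, and more seriously, even granted a normal abelian $A_1^*\lhd H$ with $[H_1:A_1^*]$ bounded, you cannot conclude: $H/A_1^*$ is (bounded)-by-abelian, so it has an abelian subgroup $B/A_1^*$ of bounded index, but $B$ is then merely metabelian, an extension of an abelian group by an abelian group. Lemma \ref{lem: pre lemma 1} cannot close this, since parts (1) and (6) require $\Bd$ of the kernel or of the quotient to be finite, and here $\Bd(A_1^*)$ and $\Bd(B/A_1^*)$ are both infinite over the family. The finite Heisenberg groups $1\to\mu_m\to H_m\to(\Z/m)^2\to 1$ show that such metabelian groups need not contain abelian subgroups of bounded index; they are exactly the theta groups behind the failure of the Jordan property for $\Bir(\PP^1\times E)$, and they even sit inside one-dimensional algebraic groups (Mumford's theta groups $1\to\mathbb{G}_m\to\mathcal{G}(L)\to E[m]\to 1$) --- just not connected ones. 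Any correct proof must therefore use connectedness through structure theory, which your argument never does.

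The paper's proof (following \cite{MZ18}) is founded not on Chevalley's theorem but on Rosenlicht's decomposition $G=G_{\aff}\cdot G_{\ant}$, whose decisive feature is that the anti-affine part $G_{\ant}$ is \emph{central} in $G$ while $G/G_{\ant}$ is \emph{affine} of dimension $\le n$. For a finite $H\le G$ one sets $Z=H\cap G_{\ant}$, central in $H$; the uniform Jordan property and rank bound for affine groups of dimension $\le n$ produce $H_1\le H$ of bounded index with $H_1/Z$ abelian of bounded rank. Thus $H_1$ is a central extension of an abelian group of bounded rank, and its commutator subgroup lands in $Z\cap[G,G]\subseteq G_{\ant}\cap[G_{\aff},G_{\aff}]$ (using $[G,G]=[G_{\aff},G_{\aff}]$ by centrality of $G_{\ant}$, and $[G,G]\subseteq G_{\aff}$ since $G/G_{\aff}$ is an abelian variety). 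The finite subgroups of this commutative affine group are bounded in terms of $n$ alone: unipotent groups are torsion-free in characteristic $0$, and a central diagonalizable subgroup of $[G_{\aff},G_{\aff}]$ injects into the centre of a semisimple group of dimension $\le n$, which is finite of bounded order. With $[H_1,H_1]$ of bounded order and $H_1/Z$ of bounded rank, the alternating commutator pairing has a subgroup of bounded index in $H_1/Z$ on which it vanishes, whose preimage is the desired abelian subgroup. It is precisely this bounded-commutator phenomenon --- forced by connectedness via the centrality of $G_{\ant}$ and the vanishing of homomorphisms from abelian varieties to affine groups --- that rules out the Heisenberg obstruction; the Chevalley sequence alone gives you no access to it.
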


\begin{theorem}[{\cite[Theorem~1.4]{MZ18}}]\label{thm: automorphism algebraic group 2}
Fix an integer $n \ge 0$.
Let $\mathcal{G}=\{\Aut_0(X) \, | \, X$ is a projective variety of dimension $n\}$.
Then $\mathcal{G}$ is uniformly Jordan and $\Rk(\mathcal{G})<\infty$.
\end{theorem}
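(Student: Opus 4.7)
The plan is to reduce Theorem~\ref{thm: automorphism algebraic group 2} to Theorem~\ref{thm: automorphism algebraic group 1} by bounding $\dim \Aut_0(X)$ uniformly in $n = \dim X$. As in the discussion preceding Lemma~\ref{lem: pre lemma 3}, $\Aut_0(X)$ is a connected algebraic group for any projective variety $X$. Once one finds $N = N(n)$ such that $\dim \Aut_0(X) \le N$ for every projective $X$ of dimension $n$, the family $\mathcal{G}$ is contained in the family of all connected algebraic groups of dimension at most $N$; by Theorem~\ref{thm: automorphism algebraic group 1} applied for each $d \le N$, taking the maximum of the resulting Jordan constants and ranks, this larger family is uniformly Jordan and of bounded rank, yielding both conclusions of Theorem~\ref{thm: automorphism algebraic group 2}.

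To bound $\dim \Aut_0(X)$, I would invoke the Chevalley--Rosenlicht decomposition
\[
1 \to L_X \to \Aut_0(X) \to A_X \to 1,
\]
with $L_X$ connected affine and $A_X$ an abelian variety. Albanese functoriality produces a homomorphism $\tau : \Aut_0(X) \to \Alb(X)$: any $g \in \Aut_0(X)$ induces an automorphism of $\Alb(X)$ lying in its identity component, hence a translation. Since affine groups admit no non-trivial morphism to an abelian variety, $L_X \subseteq \ker \tau$, and $\tau$ descends to an isogeny of $A_X$ onto its image in $\Alb(X)$; a direct orbit argument on $X$ (via a Rosenlicht section of $A_X$ after isogeny) then gives $\dim A_X \le n$. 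For the affine part $L_X$, acting faithfully on $X$, I would combine the orbit bound $\dim(L_X \cdot x) \le n$ at a general point $x \in X$ with a jet-space analysis of the stabilizer $(L_X)_x$: the latter maps to $\GL(T_x X) \cong \GL_n$, and the kernel of this map is a unipotent algebraic group whose dimension is controlled by $n$ via the successive actions on higher jets $\mathfrak{m}_x^k / \mathfrak{m}_x^{k+1}$.

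The hardest step is the uniform bound on $\dim L_X$: unlike $\GL_n(k)$, the group $L_X$ is only accessible abstractly through its faithful action on $X$, and the naive route via an $L_X$-linearized very ample embedding $X \hookrightarrow \mathbb{P}^N$ yields an $N$ that depends on $X$ rather than only on $n$. The argument must therefore rely on intrinsic jet-space estimates at a general point of $X$, or alternatively on a Matsusaka-type boundedness result for the polarized pairs that arise. A secondary subtlety is the abelian part: the naive Albanese inequality only gives $\dim A_X \le \dim \Alb(X)$, which is not bounded by $n$, so one must refine via the orbit map on $X$. Once $\dim \Aut_0(X) \le N(n)$ is established, Theorem~\ref{thm: automorphism algebraic group 1} together with Lemma~\ref{lem: pre lemma 2} closes the proof of both the Jordan property and the rank bound for $\mathcal{G}$.
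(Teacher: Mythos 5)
Your reduction cannot work as stated: its foundation is the existence of a bound $\dim \Aut_0(X) \le N(n)$, and no such bound exists. This is exactly the point of Remark~\ref{nodirect}: for the Hirzebruch surface $\mathbb{F}_d$ one has $\dim \mathbb{F}_d = 2$ but $\dim \Aut_0(\mathbb{F}_d) = d+5$, with unipotent radical $\mathbb{G}_a^{d+1}$ (\cite[Theorem 3]{Mar71}). This also pinpoints where your argument breaks: the ``jet-space analysis'' of the stabilizer of a general point does not control its unipotent part, because faithfulness of the action on $\mathfrak{m}_x^k/\mathfrak{m}_x^{k+1}$ requires $k$ to grow with the group, and $\GL(\mathfrak{m}_x/\mathfrak{m}_x^{k+1})$ has unbounded dimension as $k$ grows; in the example the stabilizer of a general point of $\mathbb{F}_d$ has dimension $d+3$. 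So the step you flag as ``the hardest'' is not merely hard but false, and a Matsusaka-type boundedness statement cannot rescue it either, since the conclusion it would be asked to deliver is untrue. (Your treatment of the abelian/anti-affine part via the Albanese and an orbit argument is fine and is essentially what the paper does, using Brion's optimal bound \cite[Proposition 3.2]{Bri13}.)

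The actual proof in \cite{MZ18} circumvents the unbounded unipotent radical rather than bounding it. In characteristic $0$ a unipotent group $U$ is torsion-free, so $\Bd(U)=1$ and $\Rk(U)=0$; by Lemma~\ref{lem: pre lemma 1}(3) and (5), passing to the quotient of $\Aut_0(X)$ by the unipotent radical of its affine part (a normal subgroup of $\Aut_0(X)$, being characteristic in $\Aut_0(X)_{\aff}$) changes neither the Jordan constant nor the rank bound one must prove. The crucial input is then \cite[Lemma~3.7]{MZ18}, which shows that the remaining data --- a Levi (reductive) part of $\Aut_0(X)_{\aff}$ together with the anti-affine part controlled by Brion's bound --- has dimension bounded in terms of $n$ alone, after which the uniform Jordan property and rank bound follow as in Theorem~\ref{thm: automorphism algebraic group 1}. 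To repair your write-up you would need to replace the global dimension bound by this ``kill the unipotent radical first'' step and then supply the boundedness of the reductive part, which is the genuinely new content of \cite[Lemma~3.7]{MZ18}.
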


\begin{remark}\label{nodirect}
Theorem \ref{thm: automorphism algebraic group 2} cannot be deduced directly from Theorem \ref{thm: automorphism algebraic group 1} because the dimension of $\Aut_0(X)$ cannot be bounded solely in terms of $\dim X$. 
For instance, according to \cite[Theorem 3]{Mar71}, 
if $\mathbb{F}_d$ denotes the Hirzebruch surface of degree $d \geq 1$, then the unipotent radical of $\Aut_0(\mathbb{F}_d)$ is $\mathbb{G}_a^{d+1}$, where $\mathbb{G}_a = (k, +)$ represents the additive group. 
Notably, $\dim \Aut_0(\mathbb{F}_d) = d + 5$, which is not bounded (in terms of $\dim \mathbb{F}_d =2$). 
To establish Theorem \ref{thm: automorphism algebraic group 2}, we mitigate the impact of such unipotent radicals through a crucial lemma: \cite[Lemma 3.7]{MZ18}.
\end{remark}

\begin{remark}
    We briefly outline the strategy of the proofs for the aforementioned two theorems. Let $G$ be a connected algebraic group, and let $G_{\aff}$ represent the largest connected affine normal subgroup. 
It is important to note that any anti-affine subgroup of $G$ is connected and lies within the centre of $G$, thereby being normal in $G$. 
    We denote the largest anti-affine subgroup by $G_{\ant}$.
The foundation of our proof lies in the classic decomposition theorem $G = G_{\aff} \cdot G_{\ant}$ for a connected algebraic group $G$. 
This theorem is attributed to Rosenlicht \cite[Corollary 5, p. 440]{Ros56}, and further modern elaborations can be found in Brion \cite{Bri09}.
We also use the effective (and optimal) upper bound in Brion \cite[Proposition 3.2]{Bri13}
for the dimension of the anti-affine part of $\Aut_0(X)$.
For any projective variety $X$ and its normalization $X'$, $\Aut(X)$ lifts to $X'$. So it suffices to prove Theorem~\ref{thm: automorphism projective} when $X$ is normal. Then, Lemma~\ref{lem: pre lemma 3}, Theorem \ref{thm: automorphism algebraic group 1} and an application of \cite[Lemma 3.7]{MZ18} imply
Theorem \ref{thm: automorphism projective}.
\end{remark}

At the end of this section, we list two well known related open questions.
We are only interested in the situation when $\Bir(X)$ is not Jordan, see Theorem \ref{thm: bir q jordan}.
The following question was first formulated by Popov in \cite[Corollary 2.30]{Pop11}.

\begin{question}\label{que: algebraic variety jordan}
    Let $X$ be an algebraic variety.
    Is $\Aut(X)$ Jordan?
\end{question}

Bandman and Zarhin proved that $\Aut(X)$ is Jordan when $\dim X =2$ or $X$ is birational to the product $\PP^1 \times A$ with $A$ a smooth projective variety having no rational curve; see~\cite[Theorem 1.7]{BZ15} and \cite[Theorem 4]{BZ19}.

\begin{question}\label{que: uniform Jordan}
    Let $X$ be a projective variety and $r > 0$.
    Let $\mathcal{G}$ be the family of $\Aut(X')$ with $X'$ birational to $X$ and the Picard number $\rho(X')\le r$.
    Is $\mathcal{G}$ uniformly Jordan?
\end{question}

Bounding the Picard number is necessary by noting that $\Bir(\mathbb{P}^1\times E)$ is not Jordan as in Theorem \ref{thm: bir surface}, and its finite subgroup (action) can always be regularized on some high birational model of $\mathbb{P}^1\times E$.
A natural strategy is to apply Lemma \ref{lem: pre lemma 3}.
However, we do not know whether the $\ell$ there depends only on the Picard number.

%
%
%
\section{Positive characteristic} \label{sec: pos}
%
%
%
%


The Jordan property for $\GL_n(k)$ does not hold in general when $\textup{char}\, k = p > 0$ due to the existence of unipotent elements of finite order. 
For instance, the group $\mathrm{GL}_n(\overline{F}_p)$ contains arbitrarily large subgroups of the form $\mathrm{SL}_n(F_{p^r})$ which are simple modulo their centres. 
Nevertheless, for any finite subgroup $\Gamma$ of $\mathrm{GL}_n(k)$ of order not divisible by $p$, 
there still exists a normal abelian subgroup $A$ of $\Gamma$ with $[\Gamma : A] \leq J(n)$ for some constant $J(n)$. Later, Serre \cite[Theorem 5.3]{Ser09} showed that the Cremona group $\mathrm{Cr}_2(k)$ of rank 2 over a field $k$ also has this property. This motivates the following definition.

\begin{definition}[{\cite[Definition~1.2]{Hu20}}]
Let $p$ be a prime number or zero. A group $G$ is called a \emph{generalized $p$-Jordan group} if there exists a constant $J(G)$, depending only on $G$, such that every finite subgroup $\Gamma$ of $G$ whose order is not divisible by $p$ contains a normal abelian subgroup $A$ of index $\leq J(G)$. We also call such a group generalized Jordan if there is no confusion caused (e.g., in practice, $p$ will always denote the characteristic of the ground field $k$).
\end{definition}

Note that when $p = 0$, this notion coincides with Popov's \cite[Definition 2.1]{Pop11}. The above-mentioned results can be reformulated as follows. Both general linear groups $\mathrm{GL}_n(k)$ and the Cremona group $\mathrm{Cr}_2(k)$ of rank 2 are generalized $p$-Jordan, where $p = \mathrm{char}(k)$. It is proved in \cite{Hu20} that, in addition to the above, any algebraic group is generalized Jordan.

\begin{theorem}[{\cite[Theorem~1.3]{Hu20}}]\label{thm: H20 algebraic group p jordan}
Any algebraic group $G$ defined over a field $k$ of characteristic $p \geq 0$ is generalized $p$-Jordan. Namely, there exists a constant $J(G)$, depending only on $G$, such that every finite subgroup $\Gamma$ of $G(k)$ whose order is not divisible by $p$ contains a normal abelian subgroup of index $\leq J(G).$
\end{theorem}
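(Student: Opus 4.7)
The plan is to reduce the statement to the generalized $p$-Jordan property of $\GL_n$ (the classical base case quoted at the start of the section) via the structure theory of algebraic groups, combined with Lemma~\ref{lem: pre lemma 1}.

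First, since the component group $G/G^0$ is finite, replacing $\Gamma$ by $\Gamma \cap G^0(k)$ costs at most a factor $[G:G^0]$, so one may assume $G$ is connected. Next, in characteristic $p>0$ the torsion of any connected unipotent algebraic group is a $p$-group; hence $\Gamma \cap R_u(G_{\aff})(k) = \{e\}$, because $|\Gamma|$ is coprime to $p$. (In characteristic $0$ the same conclusion holds since unipotent groups are torsion-free.) Thus $\Gamma$ injects into $G/R_u(G_{\aff})$, whose maximal affine normal subgroup is reductive, and one may further assume $L := G_{\aff}$ is reductive. By Chevalley's theorem one has an exact sequence $1 \to L \to G \to A \to 1$ with $A$ an abelian variety; fix a closed embedding $L \hookrightarrow \GL_n$ with $n=n(G)$.

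Set $\Gamma_1 := \Gamma \cap L(k)$ and $\Gamma_2 := \Gamma/\Gamma_1 \hookrightarrow A(k)$. The generalized $p$-Jordan property of $\GL_n$ supplies an abelian normal subgroup $B \lhd \Gamma_1$ with $[\Gamma_1 : B] \le \J(\GL_n) =: J_1$, and Lemma~\ref{lem: pre lemma 2}(5) gives $\Rk(\Gamma_2) \le 2\dim A$. It remains to promote $B$ to an abelian normal subgroup of $\Gamma$ of bounded index. Note that Lemma~\ref{lem: pre lemma 1}(1) does not apply directly, because $\Bd(\Gamma_2) = \infty$. The plan is then to bring in Rosenlicht's decomposition $G = G_{\aff} \cdot G_{\ant}$ with $G_{\ant} \subseteq Z(G)$: then $\Gamma_{\ant} := \Gamma \cap G_{\ant}(k)$ is central in $\Gamma$, while the quotient $\Gamma/\Gamma_{\ant}$ embeds into the affine algebraic group $G/G_{\ant} \hookrightarrow \GL_m$ and thus contains a normal abelian subgroup of index $\le \J(\GL_m)$ by the base case. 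Pulling back yields a normal subgroup $\Gamma' \lhd \Gamma$ of bounded index that is nilpotent of class at most $2$, with $[\Gamma', \Gamma'] \subseteq \Gamma_{\ant}$ of $\Rk$ bounded by $\dim G_{\ant}$ (via Brion's bound). Finally one would extract from $\Gamma'$ an abelian subgroup of bounded index via the alternating commutator pairing $\Gamma'/Z(\Gamma') \times \Gamma'/Z(\Gamma') \to [\Gamma', \Gamma']$.

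The hardest step will be this final extraction of an abelian subgroup from the class-$2$ nilpotent $\Gamma'$: a naive polarization of the alternating commutator pairing only yields a totally isotropic subgroup of index of order $\sqrt{|\Gamma'/Z(\Gamma')|}$, which is unbounded. Overcoming this will likely require either a Brauer-lifting argument—lifting the $p'$-representation of $\Gamma$ on the affine part to characteristic $0$ and invoking the full Jordan property from \cite{MZ18}—or an inductive argument peeling off rank-$1$ summands of $\Gamma_{\ant}$ one at a time. This is the step at which invariants of $G$ beyond $\dim G$ enter the constant $\J(G)$.
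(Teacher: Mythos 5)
The survey states this theorem without proof, citing \cite[Theorem~1.3]{Hu20}; the argument there runs parallel to the characteristic-zero strategy of \cite{MZ18} sketched in Section~\ref{sec: aut}, and your outline reproduces its skeleton correctly: reduce to connected $G$, use the $p'$-hypothesis (torsion-freeness in characteristic $0$) to kill the unipotent radical and assume $G_{\aff}$ reductive, pass to the affine quotient $G/G_{\ant}\hookrightarrow\GL_m$ of Rosenlicht's decomposition, and pull back a normal abelian subgroup of $\Gamma/\Gamma_{\ant}$ to obtain $\Gamma'\lhd\Gamma$ of bounded index with $[\Gamma',\Gamma']\subseteq\Gamma_{\ant}\subseteq Z(\Gamma)$. (The intermediate paragraph about $\Gamma_1=\Gamma\cap L(k)$ and Chevalley's exact sequence ends up playing no role and can be dropped.)

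The step you defer, however---extracting an abelian subgroup of bounded index from the class-$2$ nilpotent group $\Gamma'$---is a genuine gap, and neither of your proposed remedies is the one that works. Pure group theory cannot close it: the Heisenberg extensions $1\to\Z/n\to H_n\to(\Z/n)^2\to 1$ are class-$2$ nilpotent with cyclic central commutator subgroup yet have no abelian subgroup of index $<n$; these are exactly the groups responsible for $\Bir(\PP^1\times E)$ failing to be Jordan. The missing idea is one more use of the ambient algebraic group: since $G_{\ant}$ is central, every commutator in $G$ is a commutator in $G_{\aff}$, so
$$[\Gamma',\Gamma']\ \subseteq\ G_{\ant}\cap[G_{\aff},G_{\aff}]\ \subseteq\ Z(G)\cap[G_{\aff},G_{\aff}],$$
and once $G_{\aff}$ is reductive its derived group is semisimple, so this intersection lies in the finite centre of that semisimple group; hence $|[\Gamma',\Gamma']|\le c$ for a constant $c=c(G)$ independent of $\Gamma$. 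Now the homomorphism $\Gamma'\to\Hom\bigl(\Gamma'/[\Gamma',\Gamma'],\,[\Gamma',\Gamma']\bigr)$, $x\mapsto[x,\cdot\,]$, has kernel $Z(\Gamma')$, and $\Rk(\Gamma'/[\Gamma',\Gamma'])\le m+2\dim G_{\ant}$ because $\Gamma'/\Gamma_{\ant}$ is an abelian subgroup of $\GL_m$ of order prime to the characteristic (hence diagonalizable of rank $\le m$) while $\Rk(\Gamma_{\ant})\le 2\dim G_{\ant}$. This gives $[\Gamma':Z(\Gamma')]\le c^{\,m+2\dim G_{\ant}}$ with $Z(\Gamma')$ abelian; the standard intersection-of-conjugates trick (as in the remark after Definition~\ref{defn: J and JN}) then produces the required \emph{normal} abelian subgroup of $\Gamma$ of index bounded in terms of $G$ alone. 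With this supplement your outline becomes a complete proof; no Brauer lifting or rank-peeling is needed.
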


Another generalization of Jordan’s theorem to prime characteristic was due to Brauer and 
Feit \cite{BF66} by allowing an arbitrary finite subgroup $\Gamma$ of $\mathrm{GL}_n(k)$ whose order may be divisible 
by $p > 0$. They showed that $\Gamma$ contains a normal abelian subgroup whose index is bounded by 
a constant depending on $n$ as well as the order of the $p$-Sylow subgroup $\Gamma^{(p)}$ of $\Gamma$. Larsen and 
Pink subsequently extended Brauer–Feit \cite{BF66} as follows.

\begin{theorem}[{\cite[Theorem~0.4]{LP11}}]\label{thm: H20 linear group p jordan}
For any positive integer $n$, there exists a constant $J'(n)$ such that any finite subgroup $\Gamma$ of $\mathrm{GL}_n(k)$ over a field $k$ of characteristic $p > 0$ contains a 
normal abelian $p'$-subgroup $A$ of index $\leq J'(n) \cdot |\Gamma^{(p)}|^3.$
\end{theorem}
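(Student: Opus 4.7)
The plan follows the approach of Larsen and Pink, combining algebraic-group theory with the order formulas for finite groups of Lie type, notably bypassing the classification of finite simple groups. First I would reduce to the case $k = \overline{\mathbb{F}_p}$: since $\Gamma$ is finite it lies in $\GL_n$ over a finitely generated subfield of $k$, and after specializing to a residue field of characteristic $p$ and passing to its algebraic closure one is reduced to this case.

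Next, consider the Zariski closure $G := \overline{\Gamma} \subset \GL_n$ and its identity component $G^0$. The key technical input, which is the heart of the Larsen--Pink argument, is a bound on the ``degree'' of $G$ as a subvariety of $\GL_n$ depending on $n$ only; this yields $[\Gamma : \Gamma \cap G^0(k)] \leq J_0(n)$, playing the role of Lemma~\ref{lem: pre lemma 3} in positive characteristic. Setting $\Gamma_0 := \Gamma \cap G^0(k)$ and exploiting the decomposition of $G^0$ into unipotent radical, maximal torus, and semisimple quotient, I would then produce a characteristic, hence $\Gamma$-normal, filtration
$$\Gamma_0 \;\supset\; M \;\supset\; N$$
in which $\Gamma_0/M$ is a product of finite simple groups of Lie type in characteristic $p$ (from the semisimple quotient), $M/N$ is an abelian $p'$-group (from the torus), and $N$ is a normal $p$-subgroup (from the unipotent radical and the $p$-part of the torus), with the number and Lie ranks of the simple factors bounded in terms of $n$.

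To extract the normal abelian $p'$-subgroup $A$, take the Schur--Zassenhaus complement of $N$ in $M$: since $M/N$ is abelian and $N$ is a normal $p$-Sylow of $M$, this complement is itself abelian and unique up to $M$-conjugacy, hence characteristic in $M$ and therefore normal in $\Gamma$. The index decomposes as
$$[\Gamma : A] = [\Gamma : \Gamma_0] \cdot |\Gamma_0/M| \cdot |N|,$$
controlled by $J_0(n)$, by $|\Gamma_0/M|$, and by $|\Gamma^{(p)}|$ respectively. For the middle factor, the order formulas for finite simple groups of Lie type in characteristic $p$ of rank bounded in $n$ give $|L| \leq C(n) \cdot |L|_p^3$ for each simple factor $L$ — sharp already for $\mathrm{PSL}_2(q)$ where $|L| \sim q^3$ and $|L|_p = q$. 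Multiplying over the bounded number of factors and using $|\Gamma_0/M|_p \leq |\Gamma^{(p)}|$ produces the asserted bound $[\Gamma:A] \leq J'(n) \cdot |\Gamma^{(p)}|^3$.

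The main obstacle is the degree bound on $\overline{\Gamma}$ and the construction of the characteristic filtration without invoking the classification of finite simple groups. This requires a quantitative complexity calculus on constructible subsets of $\GL_n$ together with a careful analysis of how the Levi decomposition of $G^0$ interacts with the finite-group structure of $\Gamma_0$, and it is here that most of the work in the Larsen--Pink paper is concentrated.
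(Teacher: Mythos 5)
This statement is not proved in the survey: it is quoted verbatim from Larsen--Pink \cite[Theorem~0.4]{LP11}, so the only comparison available is with the argument of the cited source, which your sketch does follow in outline (Zariski closure, bounded-index reduction to the identity component, the normal series with layers ``product of Lie-type simple groups / abelian $p'$ / $p$-group'', and the order estimate $|L|\le |L|_p^{3}$ for simple groups of Lie type in characteristic $p$). The genuine gap is in your final extraction of $A$. A Schur--Zassenhaus complement to $N$ in $M$ is unique only up to conjugacy, and ``unique up to conjugacy'' does not imply ``characteristic'': already for $M=\mathbb{F}_q\rtimes\mathbb{F}_q^{\times}$ (with $N=\mathbb{F}_q$ the normal Sylow $p$-subgroup and $M/N$ abelian of order prime to $p$) the complements are the $q$ conjugate copies of $\mathbb{F}_q^{\times}$, none of which is normal even in $M$. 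So the subgroup you produce need not be normal in $\Gamma$, and in fact no abelian $p'$-subgroup of small index in such an $M$ is normal; the correct construction must be canonical (e.g.\ an $O_{p'}$ of a suitable characteristic subgroup such as a centralizer of $N$, or the counting argument of \cite{LP11} which absorbs all of $M$'s non-abelianness into the $|\Gamma^{(p)}|^{3}$ factor), together with a separate bound on the index of that canonical subgroup. This step cannot be waved through.

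Two further points. First, your index bookkeeping as written gives exponent $4$, not $3$: you bound the Lie-type layer by $J(n)\cdot|\Gamma^{(p)}|^{3}$ and then multiply by $|N|\le|\Gamma^{(p)}|$. To get exponent $3$ you must use that the $p$-parts of the successive layers multiply to divide $|\Gamma^{(p)}|$, i.e.\ $|\Gamma_0/M|_p\cdot|N|\le|\Gamma^{(p)}|$, and then cube the \emph{product} (using $|N|\le|N|^{3}$), rather than bounding each layer against the full Sylow order. Second, you correctly acknowledge that the real content --- the degree bound on $\overline{\Gamma}$ and the characteristic filtration without the classification of finite simple groups --- is simply assumed; that is the entire substance of \cite{LP11}, so as a self-contained proof the proposal establishes essentially nothing beyond the (partly flawed) deduction of Theorem~0.4 from Theorem~0.2 of that paper.
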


Here a finite group is called a $p$-group (resp. $p'$-group) if its order is some power of $p$ (resp. relatively prime to $p$). In an analogous way, Hu \cite{Hu20} introduced the following notion.

\begin{definition}[{\cite[Definition~1.6]{Hu20}}]\label{defn: p Jordan}
Let $p$ be a prime number. A group $G$ is called a \emph{$p$-Jordan group} if there exist 
constants $J'(G)$ and $e(G)$, depending only on $G$, such that every finite subgroup $\Gamma$ of $G$ contains 
a normal abelian $p'$-subgroup $A$ of index $\leq J'(G) \cdot |\Gamma^{(p)}|^{e(G)}.$
\end{definition}

The following result of Hu extends Theorem~\ref{thm: H20 linear group p jordan} to arbitrary algebraic groups.

\begin{theorem}[{\cite[Theorem~1.7]{Hu20}}]\label{thm: H20 algebraic group p Jordan}
Any algebraic group $G$ defined over a field $k$ of characteristic $p > 0$ is $p$-Jordan. 
That is, there are constants $J'(G)$ and $e(G)$, depending only on $G$, such that any finite subgroup 
$\Gamma$ of $G(k)$ contains a normal abelian $p'$-subgroup $A$ of index $\leq J'(G) \cdot |\Gamma^{(p)}|^{e(G)}.$
\end{theorem}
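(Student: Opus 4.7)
The plan is to adapt the argument for Theorem~\ref{thm: H20 algebraic group p jordan}, replacing the classical Jordan theorem on $\GL_n(k)$ at the linear step by the stronger Larsen--Pink theorem (Theorem~\ref{thm: H20 linear group p jordan}). First, I would reduce to the case $G$ connected, exactly as in the proof of Theorem~\ref{thm: H20 algebraic group p jordan}: the identity component $G^0$ has finite index $[G:G^0]$ depending only on $G$, and for a finite $\Gamma\le G(k)$ the subgroup $\Gamma_0:=\Gamma\cap G^0(k)$ is normal in $\Gamma$ with $[\Gamma:\Gamma_0]\le[G:G^0]$ and $|\Gamma_0^{(p)}|\le|\Gamma^{(p)}|$. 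A normal abelian $p'$-subgroup $A_0\lhd\Gamma_0$ of index $m\le J'(G^0)\cdot|\Gamma^{(p)}|^{e(G^0)}$ can be replaced by the intersection of its at most $[G:G^0]$ many $\Gamma$-conjugates, giving a $\Gamma$-normal abelian $p'$-subgroup of index $\le[G:G^0]\cdot m^{[G:G^0]}$, which preserves the required form of the bound.

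Next, assuming $G$ connected, Chevalley's structure theorem yields an exact sequence
\[
1\to L\to G\to\mathcal{A}\to 1
\]
with $L$ a connected linear algebraic group equipped with a faithful representation $L\hookrightarrow\GL_N(k)$ for some $N$ depending only on $G$, and with $\mathcal{A}$ an abelian variety. Set $\Gamma_L:=\Gamma\cap L(k)$ and let $\bar\Gamma$ be its image in $\mathcal{A}(k)$, which is automatically abelian. Applying Theorem~\ref{thm: H20 linear group p jordan} to $\Gamma_L\le\GL_N(k)$ furnishes a normal abelian $p'$-subgroup $B\lhd\Gamma_L$ with $[\Gamma_L:B]\le J'(N)\cdot|\Gamma^{(p)}|^3$. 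For the quotient $\bar\Gamma$, the essential inputs are $|\bar\Gamma^{(p)}|\le|\Gamma^{(p)}|$ (since $p$-Sylows map onto $p$-Sylows) and that $\bar\Gamma^{(p')}$ has $\mathbb{Z}$-rank at most $2\dim\mathcal{A}$ (it embeds into $\mathcal{A}[n](\bar k)\cong(\mathbb{Z}/n)^{2\dim\mathcal{A}}$ for some $n$ coprime to $p$), although the order of $\bar\Gamma^{(p')}$ itself is not bounded by any function of $|\Gamma^{(p)}|$.

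The main obstacle is the final combining step: a brute-force core construction for $B$ in $\Gamma$ would incur an index exponential in $|\bar\Gamma|$, which is not controlled polynomially by $|\Gamma^{(p)}|$. To avoid this, I would exploit that $[\Gamma,\Gamma]\le\Gamma_L$ (because $\bar\Gamma$ is abelian), so the conjugation action $\Gamma\to\Aut(\Gamma_L)$ factors through the abelian quotient $\bar\Gamma$. After refining $B$ to be characteristic in $\Gamma_L$ at the cost of a further polynomial factor, this induces an action of $\bar\Gamma$ on the abelian $p'$-group $B$ of bounded rank; the automorphism group $\Aut(B)$ embeds into $\GL_r(\mathbb{Z}/n)$ for some $r$ and $n$ controlled by $G$. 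Applying Theorem~\ref{thm: H20 linear group p jordan} to this linear action gives a subgroup $\bar\Gamma'\le\bar\Gamma$ of index polynomial in $|\Gamma^{(p)}|$ whose action on $B$ factors through a tightly controlled quotient; passing to the subgroup of $B$ fixed by $\bar\Gamma'$ produces a $\Gamma$-invariant abelian $p'$-subgroup $B'\le B$ of polynomial index, and lifting a bounded-rank subgroup of $\bar\Gamma^{(p')}\cap\bar\Gamma'$ to commute with $B'$ yields the required normal abelian $p'$-subgroup of $\Gamma$ of index at most $J'(G)\cdot|\Gamma^{(p)}|^{e(G)}$. Keeping every auxiliary index polynomial in $|\Gamma^{(p)}|$, rather than merely in $|\Gamma|$, is the chief technical hurdle, and is precisely what necessitates Theorem~\ref{thm: H20 linear group p jordan} in place of the weaker generalized $p$-Jordan property of Theorem~\ref{thm: H20 algebraic group p jordan}.
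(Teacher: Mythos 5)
First, a caveat: the survey states this theorem without proof, quoting \cite[Theorem~1.7]{Hu20}, so there is no in-paper argument to compare against; I am judging your sketch against what is actually needed. Your skeleton --- reduce to $G$ connected, split off the affine part via Chevalley's structure theorem, apply Larsen--Pink (Theorem~\ref{thm: H20 linear group p jordan}) to the linear piece, and use that the prime-to-$p$ torsion of the abelian variety quotient has rank at most $2\dim\mathcal{A}$ --- is the right one and agrees with the strategy of \cite{Hu20}, which adapts \cite{MZ18}.

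The gap is in the combining step, and it is not merely technical. The three facts you assemble --- (i) $\Gamma_L$ has a normal abelian $p'$-subgroup of index polynomial in $|\Gamma^{(p)}|$, (ii) $\bar\Gamma$ is abelian with $p'$-part of bounded rank, (iii) $[\Gamma,\Gamma]\le\Gamma_L$ --- are provably insufficient to conclude. A Heisenberg group $\Theta_\ell$ of order $\ell^3$ with $\ell$ prime to $p$, i.e.\ a non-split extension $1\to\mathbb{Z}/\ell\to\Theta_\ell\to(\mathbb{Z}/\ell)^2\to 1$, satisfies all three (with ``linear part'' $\mathbb{Z}/\ell$ central and $|\Theta_\ell^{(p)}|=1$), yet its largest abelian subgroup has index $\ell\to\infty$; this is exactly the mechanism behind the failure of the Jordan property for $\Bir(\mathbb{P}^1\times E)$ in Theorem~\ref{thm: bir surface}. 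So any correct proof must input more algebraic-group structure than ``abelian quotient of bounded rank with commutators landing in the linear part.'' The missing ingredient is Rosenlicht's decomposition $G=G_{\aff}\cdot G_{\ant}$ with $G_{\ant}$ central, which forces every commutator of elements of $G$ to equal a commutator of their affine components and thereby kills the Heisenberg obstruction; this is how \cite[Lemma~3.7]{MZ18} and its $p$-adapted analogue in \cite{Hu20} actually close the argument. Separately, two of your concrete moves fail as stated: the fixed subgroup $B^{\bar\Gamma'}$ can have unbounded index in $B$ (inversion acting on $\mathbb{Z}/\ell$ has trivial fixed subgroup), so it is not ``of polynomial index''; and there is no general way to replace a normal abelian subgroup of index $m$ by a characteristic one at only polynomial cost.
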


Together with Lemma \ref{lem: pre lemma 3}, Hu has also deduced two Jordan-type properties for automorphism groups of projective varieties of arbitrary characteristic.

\begin{theorem}[{\cite[Theorems~1.9 and 1.10]{Hu20}}]\label{thm: H20 aut p Jordan}
    Let $X$ be a projective variety defined over a field $k$ of characteristic $p\ge 0$.
    Then $\Aut(X)$ is $p$-Jordan and generalized $p$-Jordan.
\end{theorem}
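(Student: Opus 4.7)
The plan is to reduce from $\Aut(X)$ to its neutral component $\Aut_0(X)$ via Lemma~\ref{lem: pre lemma 3}, and then invoke the algebraic-group results of Theorems~\ref{thm: H20 algebraic group p jordan} and \ref{thm: H20 algebraic group p Jordan} directly on $\Aut_0(X)$. The only real task is to propagate a normal abelian (respectively, normal abelian $p'$-) subgroup of $\Gamma_0 := \Gamma \cap \Aut_0(X)$ upwards to a normal abelian subgroup of the full group $\Gamma$, while preserving an index bound of the shape demanded by Definition~\ref{defn: p Jordan}.

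Fix the constant $\ell = \ell(X)$ from Lemma~\ref{lem: pre lemma 3}, and fix also $J_0' := J'(\Aut_0(X))$ and $e_0 := e(\Aut_0(X))$ from Theorem~\ref{thm: H20 algebraic group p Jordan}. Given any finite $\Gamma \le \Aut(X)$, the normal subgroup $\Gamma_0 := \Gamma \cap \Aut_0(X) \lhd \Gamma$ satisfies $[\Gamma : \Gamma_0] \le \ell$ and $|\Gamma_0^{(p)}| \le |\Gamma^{(p)}|$. The first step is to apply Theorem~\ref{thm: H20 algebraic group p Jordan} to $\Gamma_0 \le \Aut_0(X)$ to obtain a normal abelian $p'$-subgroup $A_0 \lhd \Gamma_0$ with $[\Gamma_0 : A_0] \le J_0' \cdot |\Gamma^{(p)}|^{e_0}$. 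The second step is to upgrade $A_0$ to a subgroup normal in $\Gamma$ by passing to the \emph{intersection of its $\Gamma$-conjugates}, $N := \bigcap_{g \in \Gamma} g A_0 g^{-1}$; since $\Gamma_0$ already normalizes $A_0$ and $\Gamma_0 \lhd \Gamma$, only the cosets in $\Gamma/\Gamma_0$ matter, so at most $\ell$ distinct conjugates appear, each contained in $\Gamma_0$ and each a normal abelian $p'$-subgroup of $\Gamma_0$. The Poincar\'e inequality $[\Gamma_0 : H_1 \cap H_2] \le [\Gamma_0 : H_1]\cdot[\Gamma_0 : H_2]$, iterated $\ell$ times, then gives
$$[\Gamma : N] \;\le\; \ell \cdot [\Gamma_0 : A_0]^\ell \;\le\; \ell (J_0')^\ell \cdot |\Gamma^{(p)}|^{e_0 \ell},$$
which establishes the $p$-Jordan property with $J'(\Aut(X)) = \ell (J_0')^\ell$ and $e(\Aut(X)) = e_0 \ell$. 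The generalized $p$-Jordan statement follows by the identical argument, using Theorem~\ref{thm: H20 algebraic group p jordan} in place of Theorem~\ref{thm: H20 algebraic group p Jordan} and restricting to $\Gamma$ with $p \nmid |\Gamma|$, whereupon the $|\Gamma^{(p)}|$-factor disappears.

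The step requiring the most care is the construction of $N$: one must avoid the familiar permutation-action device (the kernel of $\Gamma \to \Sym(\Gamma/A_0)$), which would replace the polynomial bound $[\Gamma_0 : A_0]^\ell$ by a factorial $[\Gamma : A_0]!$ and wipe out the polynomial dependence on $|\Gamma^{(p)}|$ mandated by Definition~\ref{defn: p Jordan}. Intersecting conjugates keeps the dependence polynomial, and the abelianness and the $p'$-property transfer for free from $A_0$. The only preliminary to verify is that $\Aut_0(X)$ is a genuine algebraic group over $k$, which is standard for projective $X$ and is already implicit in the Hilbert-scheme discussion preceding Lemma~\ref{lem: pre lemma 3}.
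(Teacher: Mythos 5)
Your proposal is correct and follows exactly the route the paper indicates for this result: reduce to the neutral component via Lemma~\ref{lem: pre lemma 3}, apply Theorems~\ref{thm: H20 algebraic group p jordan} and~\ref{thm: H20 algebraic group p Jordan} to $\Gamma\cap\Aut_0(X)$, and restore normality in $\Gamma$ by taking the normal core, which (as you rightly note) preserves the polynomial dependence on $|\Gamma^{(p)}|$ required by Definition~\ref{defn: p Jordan}. No substantive gap; the only point to keep in mind is the one you already flag, namely that $\Aut_0(X)$ is a genuine algebraic group in arbitrary characteristic, which is handled in \cite{Hu20}.
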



For birational automorphism groups, an analogue of \cite[Theorem 5.3]{Ser09} is proved by Chen and Shramov.

\begin{theorem}[{\cite[Theorem~1.6]{CS21}}]\label{thm: CS theorem 1}
There exists a constant $J$ such that for every prime $p$ and every field $k$ of characteristic $p$, every finite subgroup $G$ of the birational automorphism group $\operatorname{Bir}(\mathbb{P}^2)$ contains a normal abelian subgroup of order coprime to $p$ and index at most $J \cdot |G_p|^3$, where $G_p$ is a $p$-Sylow subgroup of $G$. In particular, for every field $k$ of characteristic $p > 0$, the group $\operatorname{Bir}(\mathbb{P}^2)$ is $p$-Jordan.
\end{theorem}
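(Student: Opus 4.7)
The plan is to follow the Mori/Iskovskikh--Serre strategy of the characteristic zero case \cite[Theorem 5.3]{Ser09}, replacing the classical Jordan theorem with the Larsen--Pink bound (Theorem \ref{thm: H20 linear group p jordan}) as the universal linear input valid in all characteristics. Given a finite subgroup $G\le\Bir(\mathbb{P}^2_k)$, the overall scheme is: regularize the $G$-action on a smooth projective rational surface, run an equivariant MMP, linearize the resulting Mori fiber space, and invoke Larsen--Pink. The uniformity of the resulting Jordan constant $J$ in the pair $(k,p)$ will come precisely from the uniformity of the Larsen--Pink bound in the ground field.

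Concretely, I would first resolve the indeterminacies of all elements of $G$ simultaneously to obtain a smooth projective rational surface $Y$ with $G\le\Aut(Y)$. A two-dimensional $G$-equivariant MMP, available over any field by classical arguments and the equivariant techniques underpinning the Iskovskikh--Manin classification, then produces a $G$-Mori fiber space $\pi\colon X\to Z$ of one of two kinds: either $Z=\Spec k$ and $X$ is a del Pezzo surface with $\Pic(X)^G=\mathbb{Z}$, or $Z=\mathbb{P}^1_k$ and $\pi$ is a $G$-equivariant conic bundle with $\Pic(X)^G=\mathbb{Z}^2$.

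In the del Pezzo case, the degree $K_X^2$ lies in $\{1,\dots,9\}$, so a uniformly bounded multiple of $-K_X$ defines a $G$-equivariant projective embedding $X\hookrightarrow\mathbb{P}^N$ with $N$ absolutely bounded. Thus $G$ is a finite subgroup of $\PGL_{N+1}(k)$; lifting to $\SL_{N+1}(k)$ inflates the $p$-Sylow subgroup by a bounded factor, and a direct application of Theorem \ref{thm: H20 linear group p jordan} produces a normal abelian $p'$-subgroup of index $\le J_1\cdot|G_p|^3$.

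In the conic bundle case, there is an exact sequence $1\to K\to G\to B\to 1$ with $B\le\Aut(\mathbb{P}^1_k)=\PGL_2(k)$ and $K$ acting fiberwise; since the generic fiber $X_\eta$ is a smooth conic over $k(\mathbb{P}^1)$, the kernel $K$ embeds into $\PGL_2(L)$ for some at-most-quadratic extension $L/k(\mathbb{P}^1)$. The hard part is to deduce from this the bound $J\cdot|G_p|^3$ with the \emph{same} exponent $3$ as in the del Pezzo case: applying Larsen--Pink separately to $K$ and $B$ and then combining via Lemma \ref{lem: pre lemma 1} yields an exponent at least $6$. The remedy I would pursue is to realize the entire group $G$ as a finite subgroup of a single ambient linear group of bounded dimension over a suitable base field --- essentially a subgroup of $\PGL_2(L)\rtimes\PGL_2(k(\mathbb{P}^1))$, viewed as a linear algebraic group of bounded dimension over $k(\mathbb{P}^1)$ --- so that one application of Theorem \ref{thm: H20 linear group p jordan} delivers the sharp exponent $3$. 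The main technical obstacle, beyond the MMP bookkeeping, is verifying that this ambient realization is canonical and bounded uniformly across all conic bundles $X$ that can appear, so that the final constants remain independent of $X$, of the quadratic twist $L$, and of the pair $(k,p)$.
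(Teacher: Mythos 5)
Your skeleton --- regularize the finite group on a smooth rational surface, run a $G$-equivariant two-dimensional MMP over an arbitrary field, split into the del Pezzo and conic bundle cases, and substitute the Larsen--Pink theorem for Jordan's theorem as the linear input --- is indeed the strategy of the cited source \cite{CS21} (the survey itself only quotes the result and gives no proof). Your del Pezzo case is essentially complete: $K_X^2\in\{1,\dots,9\}$, a bounded multiple of $-K_X$ gives a $G$-equivariant embedding into a projective space of absolutely bounded dimension, and Theorem \ref{thm: H20 linear group p jordan} (or Theorem \ref{thm: H20 algebraic group p Jordan} applied to $\PGL_{N+1}$) finishes; note that the central subgroup introduced by lifting to $\SL_{N+1}(k)$ is contained in $\mu_{N+1}(k)$, which is a $p'$-group in characteristic $p$, so it does not even change $|G_p|$.

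The conic bundle case, which is where all the difficulty of the theorem lives, is not closed by your proposal, for two reasons. First, the obstruction is misdiagnosed: since $p$-parts are multiplicative in an extension $1\to K\to G\to B\to 1$, applying a Larsen--Pink-type bound separately to $K$ and $B$ and multiplying indices already gives $J^2\cdot|K_p|^3\cdot|B_p|^3=J^2\cdot|G_p|^3$, so the exponent is not the problem. The problem is that the subgroup so produced is only an extension of an abelian $p'$-group by an abelian $p'$-group, hence metabelian and in general nowhere near containing an abelian subgroup of bounded index --- this is precisely the mechanism behind the failure of the Jordan property for $\Bir(E\times\mathbb{P}^1)$ --- and the combination statements of Lemma \ref{lem: pre lemma 1} are unavailable because in characteristic $p$ the ambient groups $\PGL_2(k(\mathbb{P}^1))$ and $\PGL_2(k)$ both contain $\PGL_2(\mathbb{F}_{p^r})$ for every $r$ and so have unbounded finite subgroups. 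Second, the proposed remedy does not work: $\PGL_2(L)\rtimes\PGL_2(k(\mathbb{P}^1))$ is not the group of points of a linear algebraic group of bounded dimension over any field, because the second factor acts on the coefficient field of the first; $G$ sits inside a group of \emph{semilinear} transformations over $k(\mathbb{P}^1)$, and the standard device of regarding semilinear maps over a Galois extension as linear maps over the fixed field multiplies the dimension by the order of the Galois group, which here is $|G_B|$ and hence unbounded, so Theorem \ref{thm: H20 linear group p jordan} cannot be invoked with a universal $n$. What is actually needed at this point (and what \cite{CS21} supplies) is a genuinely geometric analysis of $G$-conic bundles in characteristic $p$: Dickson's classification of the finite subgroups of $\PGL_2$, the constraint that the fiberwise group permutes the components of the degenerate fibers (which controls its prime-to-$p$ part once there are enough such fibers), and a reduction of the remaining cases to surfaces with bounded $(-K_X)^2$. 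Without an argument of this kind your proof is incomplete at its hardest step.
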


Furthermore, in \cite{CS21} an analog of \cite[Theorem 2.32]{Pop11} (see also \cite[Theorem 1.7]{PS21}) is proved.

\begin{theorem}[{\cite[Theorem~1.7]{CS21}}]\label{thm: CS theorem 2}
Let $k$ be an algebraically closed field of characteristic $p > 0$, and let $S$ be an irreducible algebraic surface over $k$. Then the following assertions hold.
\begin{itemize}
    \item[(1)] If $S$ is birational to a product $E \times \mathbb{P}^1$ for some elliptic curve $E$, then the group $\operatorname{Bir}(S)$ is not generalized $p$-Jordan.
    \item[(2)] If the Kodaira dimension of $S$ is negative but $S$ is not birational to a product $E \times \mathbb{P}^1$ for any elliptic curve $E$, then the group $\operatorname{Bir}(S)$ is $p$-Jordan but not Jordan.
    \item[(3)] If the Kodaira dimension of $S$ is non-negative, then the group $\operatorname{Bir}(S)$ is Jordan.
\end{itemize}
\end{theorem}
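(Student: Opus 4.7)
The plan is to argue case by case according to the Enriques–Kodaira classification of the minimal model of $S$, which holds in positive characteristic by the work of Bombieri--Mumford. For assertion \emph{(3)}, since $\kappa(S) \ge 0$ the minimal model $S_{\min}$ is unique, hence $\Bir(S) = \Aut(S_{\min})$ and it suffices to bound $\Aut(S_{\min})$. When $\kappa(S) = 2$ this group is finite. When $\kappa(S) = 0$ or $1$, the relevant canonical equivariant fibration (Albanese in the former, Iitaka pluricanonical in the latter) yields an exact sequence
$$1 \to K \to \Aut(S_{\min}) \to Q \to 1$$
in which $Q$ embeds, up to a finite kernel, into $\GL(\NS(S_{\min})) \times \Aut(\text{base})$---Jordan by Minkowski, Lemma \ref{lem: pre lemma 2}(2), and the Hurwitz bound---while $K$ is a finite extension of a subgroup of the automorphism group of the generic fibre (an abelian variety or an elliptic curve), whose finite subgroups have bounded rank by Lemma \ref{lem: pre lemma 2}(5). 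Lemma \ref{lem: pre lemma 1}(6) then delivers the Jordan property.

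For \emph{(2)}, the ruled classification of surfaces of negative Kodaira dimension forces $S$ to be birational to either $\mathbb{P}^2$ or $\mathbb{P}^1 \times C$ with $g(C) \ge 2$ (the $g(C) = 1$ case being excluded by hypothesis). The rational case is exactly Theorem \ref{thm: CS theorem 1}. For $g(C) \ge 2$ one has
$$1 \to \PGL_2(k(C)) \to \Bir(\mathbb{P}^1 \times C) \to \Aut(C) \to 1,$$
in which $\Aut(C)$ is finite by Hurwitz, and the $p$-Jordan property of $\PGL_2(k(C))$ follows by embedding any finite subgroup into $\GL_2(K)$ for some finitely generated subfield $K \subseteq k(C)$ and applying Theorem \ref{thm: H20 linear group p jordan} together with Minkowski. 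Failure of the (plain) Jordan property in both sub-cases is witnessed by the embedded sequence $\operatorname{PSL}_2(\mathbb{F}_{p^r}) \hookrightarrow \PGL_2(k) \hookrightarrow \Bir(S)$ of nonabelian simple groups of unbounded order.

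For \emph{(1)}, I would construct, for each prime $\ell \ne p$, a finite $p'$-subgroup $G_\ell$ of $\Bir(E \times \mathbb{P}^1)$ whose smallest abelian subgroup has index tending to infinity with $\ell$. The translation action of $E[\ell] \cong (\mathbb{Z}/\ell)^2$ on $E$ provides an elementary abelian subgroup of $\Bir(E \times \mathbb{P}^1)$ of rank two. Combining these translations with a carefully chosen birational self-map of $\mathbb{P}^1$ over $k(E)$ that normalises but does not centralise $E[\ell]$ produces a Heisenberg-type $\ell$-group of order $\ell^3$; standard facts about $p$-groups of nilpotency class two then imply that this group has no abelian subgroup of index smaller than $\ell$, ruling out generalised $p$-Jordan as $\ell \to \infty$. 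This is the positive-characteristic analogue of Zarhin's construction of \cite{Zar14, Zar15}.

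The main obstacle is part \emph{(1)}: producing the non-commutative $p'$-extension inside $\Bir(E \times \mathbb{P}^1)$ requires exhibiting an explicit birational self-map of $\mathbb{P}^1_{k(E)}$ that realises a nontrivial twist of the translation action of $E[\ell]$, and doing so uniformly in $\ell$ coprime to $p$ without relying on complex-analytic tools. The remaining ingredients---$p$-Jordan in \emph{(2)} and full Jordan in \emph{(3)}---reduce, via standard fibration exact sequences and Lemma \ref{lem: pre lemma 1}, to controlling linear groups through Larsen--Pink (Theorem \ref{thm: H20 linear group p jordan}) and controlling abelian parts of bounded rank through Lemma \ref{lem: pre lemma 2}.
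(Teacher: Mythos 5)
This survey does not prove Theorem \ref{thm: CS theorem 2}; it only cites \cite[Theorem 1.7]{CS21}. Your outline follows essentially the same route as Chen--Shramov: part (3) via uniqueness of the minimal model and the Jordan property of $\Aut$ in non-negative Kodaira dimension (cf.\ Proposition \ref{prop: CS proposition 3}), part (2) via the dichotomy rational versus $\mathbb{P}^1\times C$ with $g(C)\ge 2$ together with Theorem \ref{thm: CS theorem 1} and the fibration exact sequence, and part (1) via a Heisenberg-type construction \`a la Zarhin. So the architecture is right; the issues are in the execution.

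The one genuine gap is the one you flag in part (1), but it is not the obstacle you think it is: the ``carefully chosen twist of the translation action'' is exactly Mumford's theta group, which is defined purely algebraically over any field. For a line bundle $L$ of degree $\ell$ on $E$ with $\ell$ a prime different from $p$, the theta group $\mathcal{G}(L)$ is an extension $1\to \mathbb{G}_m\to \mathcal{G}(L)\to E[\ell]\to 1$ with non-degenerate commutator pairing, it acts on the total space of $\mathcal{O}_E\oplus L$ and hence on the ruled surface $\mathbb{P}(\mathcal{O}_E\oplus L)$, which is birational to $E\times\mathbb{P}^1$; its finite Heisenberg subgroup of order $\ell^3$ has every abelian subgroup of index at least $\ell$, and its order is prime to $p$. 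No complex-analytic input is needed, so part (1) closes. Two smaller slips should also be repaired. First, a finite subgroup of $\PGL_2(k(C))$ need not lift to $\GL_2$ of any field (there is a lifting obstruction); use instead a faithful algebraic embedding such as $\PGL_2\hookrightarrow \GL_4$ by conjugation on $2\times 2$ matrices before invoking Larsen--Pink, and note that the classical Hurwitz bound $84(g-1)$ fails in characteristic $p$ (only finiteness of $\Aut(C)$ survives, which is all you need). Second, in part (3) the case $\kappa(S)=0$ is not uniformly handled by the Albanese fibration: for K3 and Enriques surfaces the Albanese is a point, and there one argues instead that the kernel of $\Aut(S_{\min})\to \GL(\NS(S_{\min}))$ is finite and applies Minkowski to the image; in characteristics $2$ and $3$ you must also admit quasi-elliptic fibrations and quasi-bielliptic surfaces in the $\kappa(S)\in\{0,1\}$ analysis, though the group-theoretic reductions via Lemma \ref{lem: pre lemma 1} go through unchanged.
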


In the proof of Theorem~\ref{thm: CS theorem 2}, the following assertion is also proved, which makes Hu's result \cite{Hu20} more precise in one important particular case.

\begin{proposition}[{\cite[Proposition~1.8]{CS21}}]\label{prop: CS proposition 3}
Let $k$ be an arbitrary field, and let $X$ be a smooth geometrically irreducible projective variety of non-negative Kodaira dimension over $k$. Then the group $\operatorname{Aut}(X)$ is Jordan.
\end{proposition}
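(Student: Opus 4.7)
The plan is to combine Lemma \ref{lem: pre lemma 3} with the classical structural fact that $\Aut_0(X)$ is an abelian variety whenever $\kappa(X) \ge 0$. By Lemma \ref{lem: pre lemma 3}, there exists a constant $\ell = \ell(X)$ such that for every finite subgroup $G \le \Aut(X)$, the intersection $G_0 := G \cap \Aut_0(X)$ satisfies $[G : G_0] \le \ell$. Moreover $G_0$ is automatically normal in $G$ since $\Aut_0(X)$ is normal in $\Aut(X)$. Thus it would suffice to show that $G_0$ is always abelian; this would give $\JN(\Aut(X)) \le \ell$.

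To identify $G_0$ as an abelian group, first reduce to the algebraically closed case via the natural injection $\Aut(X) \hookrightarrow \Aut(X_{\bar k})$, which is legitimate because geometric irreducibility together with smoothness ensures $X_{\bar k}$ is smooth projective with $\kappa(X_{\bar k}) = \kappa(X) \ge 0$. Over $\bar k$ the required input is the following: a smooth projective variety with $\kappa \ge 0$ has $\Aut_0(X)_{\mathrm{red}}$ an abelian variety. In characteristic zero this is the classical Matsumura-type theorem, since a nontrivial $\mathbb{G}_a$- or $\mathbb{G}_m$-action on $X$ would force $X$ to be uniruled, incompatible with $\kappa(X) \ge 0$. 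In positive characteristic one argues via Chevalley's decomposition $\Aut_0(X)_{\mathrm{red}} = G_{\aff} \cdot G_{\ant}$, combined with the pluricanonical representation of $\Aut(X)$ on $H^0(X, mK_X)$ for $m$ large enough that $h^0(X, mK_X) \ge 1$, to conclude that the linear part $G_{\aff}$ must act trivially, hence is trivial. Once $\Aut_0(X)$ is an abelian variety, $G_0 \le \Aut_0(X)(\bar k)$ is a finite subgroup of an abelian group and is therefore abelian.

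The main obstacle is the positive-characteristic part of the structural input. In characteristic zero the argument is clean and classical; in positive characteristic one must be careful both about potential non-reducedness of $\Aut_0(X)$ as a group scheme and about the fact that the implication ``$\kappa(X) \ge 0 \Rightarrow X$ not uniruled'' is not known in full generality. The remedy is to bypass the uniruledness detour and argue directly via the invariance of pluricanonical sections: any putative $\mathbb{G}_a$- or $\mathbb{G}_m$-subgroup of $\Aut_0(X)$ would have to act on the finite-dimensional space $H^0(X, mK_X)$ while preserving an effective divisor in $|mK_X|$, which forces such a subgroup to be trivial. Once this structural ingredient is available, the remainder of the argument is a one-line application of Lemma \ref{lem: pre lemma 3}.
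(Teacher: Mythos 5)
Your overall strategy --- combine Lemma \ref{lem: pre lemma 3} with the structural fact that $\Aut_0(X)_{\mathrm{red}}$ is an abelian variety once $\kappa(X)\ge 0$, so that $G\cap\Aut_0(X)$ is an abelian normal subgroup of uniformly bounded index --- is the standard route and is essentially the proof in \cite{CS21}; note that this survey only quotes the proposition and contains no proof of its own, so the comparison is with the source. The characteristic-zero case, the reduction to $\bar k$ via $\Aut(X)\hookrightarrow\Aut(X_{\bar k})$, and the final one-line application of Lemma \ref{lem: pre lemma 3} are all fine.

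The gap is your positive-characteristic ``remedy.'' The claim that a $\mathbb{G}_a$- or $\mathbb{G}_m$-subgroup of $\Aut_0(X)$ ``would have to act on $H^0(X,mK_X)$ while preserving an effective divisor in $|mK_X|$, which forces such a subgroup to be trivial'' is not an argument: a connected linear group can act nontrivially on $X$ while fixing a member of a linear system ($\mathbb{G}_a$ fixes a vector in every finite-dimensional representation, and $\mathbb{G}_m$ fixes an eigenline), so no contradiction is produced; the same ``argument'' would apply to $\mathbb{P}^2$ with $m=-1$. What actually kills the affine part is Matsumura's theorem, which is characteristic-free: a nontrivial action of a positive-dimensional connected affine group, hence of some $\mathbb{G}_a$ or $\mathbb{G}_m$, makes $X$ \emph{ruled} (birational to $\mathbb{P}^1\times Y$), not merely uniruled, and a smooth proper variety that is ruled --- more generally, separably uniruled --- has $\kappa=-\infty$ in every characteristic. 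Your worry that ``$\kappa\ge 0\Rightarrow$ not uniruled'' fails in characteristic $p$ concerns inseparable uniruledness and is irrelevant here. Replace the pluricanonical hand-wave by this citation (and use $G(\bar k)=G_{\mathrm{red}}(\bar k)$ to dispose of possible non-reducedness of the automorphism group scheme), and the proof closes.
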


\section{Automorphism groups of K\"ahler manifolds and Fujiki's class $\mathcal{C}$} \label{sec: fujiki}

The result and strategy of \cite{MZ18} are extended to the K\"ahler case by Kim \cite{Kim18}, while Popov offered a much simpler proof by reducing the Jordan property to (connected real) Lie groups.
Note that Boothby and Wang \cite{BW65} proved the Jordan property for connected real Lie groups.
Popov \cite[Theorem 2]{Pop18} generalized it to any real Lie group $G$ with $\Bd(G/G_0)<\infty$.

\begin{theorem}\textup{(\cite{BW65, Pop18})}\label{thm: bw}
    A real Lie group $G$ is Jordan when $\Bd(G/G_0)<\infty$ where $G_0$ is the neutral component of $G$.
\end{theorem}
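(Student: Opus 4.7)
The plan is to split along the short exact sequence
$$1 \to G_0 \to G \to G/G_0 \to 1$$
and treat the connected piece $G_0$ and the component group $G/G_0$ separately. Since $\Bd(G/G_0) < \infty$ by hypothesis, Lemma \ref{lem: pre lemma 1}(1) reduces the theorem to establishing the Jordan property for $G_0$ alone, and in fact yields the explicit estimate $\J(G) \le \Bd(G/G_0) \cdot \J(G_0)$.

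For the connected part, the key input is the Cartan--Iwasawa--Mostow theorem: every compact subgroup of a connected real Lie group $G_0$ is contained in a maximal compact subgroup, and any two maximal compact subgroups of $G_0$ are conjugate. Since every finite subgroup $H \le G_0$ is automatically compact, fixing one maximal compact subgroup $K \le G_0$ shows that every such $H$ is conjugate into $K$. It therefore suffices to bound $\J(K)$.

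Now $K$ is a compact real Lie group, so by the Peter--Weyl theorem it admits a faithful continuous representation $K \hookrightarrow U(n) \le \GL_n(\mathbb{C})$ for some $n = n(K)$ depending only on $G_0$. Classical Jordan's theorem (Lemma \ref{lem: pre lemma 2}(1)) then gives $\J(K) \le \Con_n$, hence $\J(G_0) \le \Con_n$, and combining with the first paragraph completes the argument.

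The only non-formal ingredient is the Cartan--Iwasawa--Mostow conjugacy theorem, which is what lets us replace the possibly non-compact geometry of $G_0$ by a single compact Lie group; after that, Peter--Weyl together with Jordan's original theorem is routine. I also note that the hypothesis $\Bd(G/G_0) < \infty$ cannot be omitted: any abstract group equipped with the discrete topology is a zero-dimensional real Lie group with trivial identity component, so without such a hypothesis the statement would degenerate to the false assertion that every abstract group is Jordan. The role of the hypothesis is precisely to make Lemma \ref{lem: pre lemma 1}(1) applicable and thereby glue the connected result to the component group.
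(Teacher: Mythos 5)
Your proof is correct and is essentially the argument of the cited sources: the paper itself states Theorem \ref{thm: bw} without proof, and your reduction via Lemma \ref{lem: pre lemma 1}(1) to the connected case is exactly Popov's step, while the treatment of $G_0$ (finite subgroups are compact, hence conjugate into a fixed maximal compact $K$ by Cartan--Iwasawa--Mostow, then $K\hookrightarrow U(n)\le \GL_n(\C)$ by Peter--Weyl and $\J(K)\le \Con_n$ by Jordan's theorem) is the Boothby--Wang argument. Your closing remark that the hypothesis $\Bd(G/G_0)<\infty$ is indispensable, since any abstract group is a discrete (zero-dimensional) Lie group, is also accurate.
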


Recall that a group $G$ is called \emph{strongly Jordan} (cf. \cite{MiRT15}) if it is Jordan, and there exists a constant $m$ depending only on $G$, such that any finite abelian subgroup $A$ of $G$ has rank at most $m$. 

\begin{theorem}\textup{(\cite[Theorem 1.1]{Kim18}, \cite[Theorem 2]{Pop18})}\label{thm: kim-popov}
Let $X$ be a compact K\"ahler space.
Then $\Aut(X)$ is strongly Jordan.
\end{theorem}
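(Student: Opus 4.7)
The plan is to adapt the strategy of Theorem~\ref{thm: automorphism projective} to the analytic setting, replacing the algebraic-group input of Theorem~\ref{thm: automorphism algebraic group 2} by the Fujiki--Lieberman structure theorem together with Theorem~\ref{thm: bw}. Broadly: use the pullback action on cohomology to peel off a discrete piece controlled by Minkowski, and show the kernel is contained in a real Lie group with only finitely many connected components, to which Theorem~\ref{thm: bw} applies. Then reassemble via Lemma~\ref{lem: pre lemma 1}.

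First, consider the natural representation
\[
\rho \colon \Aut(X) \longrightarrow \GL\bigl(H^{2}(X,\mathbb{Z})/\operatorname{torsion}\bigr)
\]
induced by pullback. Its image lies in $\GL_{n}(\mathbb{Z})$, where $n$ is the second Betti number of $X$; by Minkowski (Lemma~\ref{lem: pre lemma 2}(2),(4)) one has $\Bd(\rho(\Aut(X))) < \infty$ and $\Rk(\rho(\Aut(X))) \leq n$. Set $K := \ker(\rho)$. Since $K$ acts trivially on $H^{2}(X,\mathbb{R}) \supseteq H^{1,1}(X,\mathbb{R})$, it fixes every Kähler class $[\omega]$ on $X$; therefore $K \subseteq \Aut_{[\omega]}(X)$ for any chosen Kähler class $[\omega]$.

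Next, invoke the Fujiki--Lieberman theorem in the Kähler-space setting: $\Aut_{[\omega]}(X)$ carries the structure of a finite-dimensional real Lie group whose neutral component coincides with $\Aut_{0}(X)$ and which has only finitely many connected components, so $\Bd(\Aut_{[\omega]}(X)/\Aut_{0}(X)) < \infty$. Theorem~\ref{thm: bw} then yields that $\Aut_{[\omega]}(X)$ is Jordan. For the rank bound on finite abelian subgroups, observe that a finite subgroup of a Lie group with finitely many components meets the neutral component in a finite subgroup, which is compact, hence conjugate into a maximal compact subgroup; the latter embeds in some $U(N)$, and any finite abelian subgroup of $U(N)$ is simultaneously diagonalizable, hence has rank at most $N$. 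Consequently $\Aut_{[\omega]}(X)$, and a fortiori its subgroup $K$, is strongly Jordan.

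Finally, assemble via the exact sequence $1 \to K \to \Aut(X) \to \rho(\Aut(X)) \to 1$: Lemma~\ref{lem: pre lemma 1}(1) gives $\J(\Aut(X)) \leq \Bd(\rho(\Aut(X))) \cdot \J(K) < \infty$, and Lemma~\ref{lem: pre lemma 1}(5) gives $\Rk(\Aut(X)) \leq \Rk(K) + \Rk(\rho(\Aut(X))) < \infty$, proving that $\Aut(X)$ is strongly Jordan. The main obstacle is securing the Fujiki--Lieberman Lie-group structure on $\Aut_{[\omega]}(X)$ for \emph{singular} compact Kähler spaces, which requires Fujiki's Douady-space version of the argument (the manifold case is classical); once this structural input is in hand, the remainder is a short, formal assembly of Lemma~\ref{lem: pre lemma 1}, Lemma~\ref{lem: pre lemma 2} and Theorem~\ref{thm: bw}, completely parallel to the projective case.
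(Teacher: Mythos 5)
Your proposal is correct and follows the route the paper itself attributes to Popov: peel off the pullback action on $H^{2}(X,\mathbb{Z})$ via Minkowski, note the kernel stabilizes a K\"ahler class, invoke the Fujiki--Lieberman finiteness of $[\Aut_{[\omega]}(X):\Aut_0(X)]$ to land in a real Lie group with finitely many components, and conclude with Theorem~\ref{thm: bw} plus the assembly lemmas. The paper gives no independent proof (it cites Kim and Popov and only sketches the two strategies), and your write-up, including the correctly flagged reliance on Fujiki's Douady-space version of the component-group finiteness for singular compact K\"ahler spaces, is essentially Popov's ``much simpler'' argument rather than Kim's extension of the algebraic-group method.
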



People then wonder whether the same conclusion holds for manifolds similar to projective or K\"ahler ones.
We recall some definitions.

A compact complex manifold \(X\) is in \emph{Fujiki's class \(\mathcal{C}\)},
if one of the following three equivalent conditions is satisfied:
\begin{enumerate}
	\item \(X\) is the meromorphic image of a compact K\"ahler manifold.
	\item \(X\) is bimeromorphic to a compact K\"ahler manifold.
	\item \(X\) admits a big \((1,1)\)-class \([\alpha]\).
\end{enumerate}

For the equivalence of the definitions and fundamental properties of Fujiki's class \(\mathcal{C}\), we refer to \cite[Definition 1.1 and Lemma 1.1]{Fuj78},
\cite[Chapter IV, Theorem 5]{Var89}
and \cite[Theorem 0.7]{DP04}.

A compact complex manifold \(X\) is \emph{Moishezon} if $X$ is bimeromorphic to a projective manifold. Hence a Moishezon manifold is in Fujiki's class 
\(\mathcal{C}\). A K\"ahler Moishezon manifold is projective.

Prokhorov-Shramov \cite{PS19} generalised Theorem \ref{thm: kim-popov}
to Moishezon threefolds.
Their idea is to reduce the problem to the projective case.
Roughly speaking, they utilise the maximal rational connected fibration $X \dasharrow V$ which still exists for their Moishezon threefolds (and indeed, for all Moishezon manifolds), and the famous non-uniruled-ness of $V$ due to Graber, Harris and Starr, to either show that $X$ is indeed a projective variety and then apply Theorem \ref{thm: automorphism projective} or show that $X$ is a rationally connected variety and then apply \cite{PS14}, or reduce to a very general fibre (a complex surface or curve) and then apply \cite{PS21}.

Perroni and the last two authors fully proved the Fujiki's case. 

\begin{theorem}[{\cite[Theorem~1.1]{MPZ22}}]\label{thm: MPZ main-thm}
Let $X$ be a connected complex manifold and let $Z\subseteq X$ be a non-empty compact complex subspace in Fujiki's class $\mathcal{C}$.
Then the automorphism group $\Aut(X, Z)$ of all biholomorphic automorphisms of $X$ preserving $Z$ is Jordan.
\end{theorem}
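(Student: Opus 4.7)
The plan is to analyze $\Aut(X,Z)$ via the restriction homomorphism
$$
\rho: \Aut(X,Z)\longrightarrow \Aut(Z),\qquad g\mapsto g|_Z,
$$
combining a Cartan--Bochner linearization for its kernel $K:=\ker\rho$ with an extension of Theorem~\ref{thm: kim-popov} (Kim--Popov) from compact K\"ahler spaces to Fujiki's class $\mathcal{C}$. For the kernel, every finite subgroup $H\le K$ fixes each point of $Z$; picking any $p\in Z$, Cartan--Bochner linearization yields a faithful representation $H\hookrightarrow \GL(T_pX)\cong\GL_n(\mathbb{C})$ with $n=\dim X$. By Jordan's theorem (Lemma~\ref{lem: pre lemma 2}(1),(4)) one then obtains $\J(H)\le \Con_n$ and $\Rk(H)\le n$, so every finite subgroup of $K$ is strongly Jordan with constants depending only on $n$.

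For the image $\rho(\Aut(X,Z))\le \Aut(Z)$, the task is to upgrade Theorem~\ref{thm: kim-popov} from compact K\"ahler to Fujiki's class $\mathcal{C}$, which is the content of the two strategies outlined in Section~\ref{sec: fujiki}. In outline: either one reduces to a K\"ahler model via an equivariant bimeromorphic modification of $Z$ and transfers the strong Jordan property along it, or one argues intrinsically with a big $(1,1)$-class $[\alpha]$ on $Z$, showing via a Fujiki--Lieberman-type statement that $\Aut_{[\alpha]}(Z)$ is a finite extension of the connected complex Lie group $\Aut_0(Z)$, that $\Aut(Z)/\Aut_{[\alpha]}(Z)$ has bounded finite subgroups by Minkowski's theorem applied to the induced action on an integer cohomology lattice (Lemma~\ref{lem: pre lemma 2}(2)), and finally invoking Theorem~\ref{thm: bw} (Boothby--Wang--Popov) to conclude that $\Aut(Z)$ is strongly Jordan.

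The main obstacle is the combination step. Fix a finite $F\le \Aut(X,Z)$ and let $F_2=\rho(F)$. Strong Jordan for $\Aut(Z)$ supplies a normal abelian $A\lhd F_2$ of bounded index and bounded rank $r$; taking $F':=\rho^{-1}(A)\cap F$ gives $[F:F']$ bounded, and a short exact sequence
$$
1\longrightarrow F\cap K\longrightarrow F'\longrightarrow A\longrightarrow 1.
$$
Since $\Bd(K)=\infty$ in general (any finite cyclic subgroup of $\GL_n(\mathbb{C})$ can occur as a subgroup of $K$), Lemma~\ref{lem: pre lemma 1}(6) cannot be applied directly. To bypass this, I would construct a common linearization of $F'$ near $Z$: using compactness of $Z$ and averaging a Hermitian metric over the finite group $F'$, produce an $F'$-equivariant biholomorphism between an invariant tubular neighborhood of $Z$ in $X$ and an open subset of the normal bundle $N_{Z/X}$; combined with the bounded-rank structure of the image of $F'$ in $\Aut_0(Z)$ after a further bounded-index reduction, this realizes $F'$ as a subgroup of $\GL_m(\mathbb{C})$ for some $m$ depending only on $n$ and on invariants of $Z$, and Jordan's theorem (Lemma~\ref{lem: pre lemma 2}(1)) then finishes the proof.
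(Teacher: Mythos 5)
Your decomposition into the kernel and image of the restriction $\rho:\Aut(X,Z)\to\Aut(Z)$ is not what the paper does, and the step you yourself identify as the main obstacle — recombining the two — is where the argument breaks. The proposed fix does not work: averaging a Hermitian metric over $F'$ produces an $F'$-invariant metric whose exponential map gives only a \emph{smooth} (at best real-analytic) equivariant tubular neighborhood; holomorphic tubular neighborhoods of complex submanifolds do not exist in general, so there is no $F'$-equivariant biholomorphism between a neighborhood of $Z$ and an open set in $N_{Z/X}$. Even granting such a linearization along $Z$, the conclusion that $F'$ embeds into $\GL_m(\mathbb{C})$ is unjustified, because a bundle automorphism of $N_{Z/X}$ covers a (non-linear) automorphism of the base $Z$; the group of such automorphisms is not a linear group, and ``bounded rank of the image in $\Aut_0(Z)$'' does not convert it into one. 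A further unaddressed point is that $Z$ is only assumed to be a compact complex \emph{subspace} in Fujiki's class $\mathcal{C}$: it may be singular and non-K\"ahler, so $N_{Z/X}$ need not be a vector bundle at all. (Secondarily, your treatment of the image invokes the strong Jordan property of $\Aut(Z)$ for $Z$ in Fujiki's class, which in the paper's architecture is Corollary \ref{cor: MPZCor1}, i.e.\ a consequence of the very theorem being proved; one can substitute the independent route via Theorem \ref{thm: JM lifting theorem} and Theorem \ref{thm: bw}, together with an equivariant resolution of $Z$, but this must be said explicitly.)

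Your instinct to use the normal bundle, following Mundet i Riera, is exactly the paper's, but the correct execution avoids both the tubular neighborhood and the kernel--image extension entirely. After passing to a subgroup of $\Aut(X,Z)$ of controlled ``size'' preserving a big $(1,1)$-class $[\alpha]$ on $Z$, one cuts down $Z$ by the non-K\"ahler locus $E_{nK}(\alpha)$ (and by singular loci) to obtain an \emph{invariant compact K\"ahler submanifold} $Z_1\subseteq Z\subseteq X$. The group then acts on the genuine vector bundle $\mathcal{N}_{Z_1/X}$ purely by differentiation — no identification of a neighborhood of $Z_1$ with the bundle is needed — and this action is faithful on finite subgroups by Cartan-type rigidity: a finite-order automorphism of the connected manifold $X$ acting trivially on $Z_1$ and on $\mathcal{N}_{Z_1/X}$ is the identity. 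Compactifying the bundle fibrewise yields a compact K\"ahler manifold with a faithful action of the relevant group, and Theorem \ref{thm: kim-popov} finishes the proof with a single uniform constant, so no extension problem ever arises. As written, your proposal has a genuine gap at the recombination step and cannot be completed along the lines you describe.
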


The approach to Theorem \ref{thm: MPZ main-thm} is based on a simple idea: make use of the non-K\"ahler locus of a big $(1,1)$ class $[\alpha]$ on an ``invariant'' subspace $Z$ (in Fujiki's class $\mathcal{C}$)
$$E_{nK}(\alpha):=E_{nK}([\alpha]):=\bigcap\limits_{T\in[\alpha]} \textup{Sing}(T)$$  to find some ``invariant'' K\"ahler submanifold $Z_1 \subseteq Z \subseteq X$. 
Here, the intersection ranges over all K\"ahler currents $T =\alpha+i\partial\bar{\partial}\varphi$ in the class $[\alpha]$, and $\textup{Sing}(T)$ is the complement of the set of points $z\in Z$ such that $\varphi$ is smooth near $z$.
Note that $E_{nK}(\alpha)$ is an analytically closed subvariety; see \cite{Tos18} and \cite{Bou04} for details.
To maintain the ``invariant" property, we may occasionally need to slightly reduce the automorphism group, while the Jordan property remains unaffected.
Next, we focus on the (linear) automorphism group of the normal bundle $\mathcal{N}_{Z_1/X}$ as inspired by Mundet i Riera \cite{MiR19}. 
By employing an equivariant compactification of $\mathcal{N}_{Z_1/X}$, we are able to narrow down the inquiry regarding the Jordan property to the scenario of compact K\"ahler manifolds.

The following result is immediately obtained by taking an equivariant resolution to reduce to the smooth case and then applying Theorem \ref{thm: MPZ main-thm} with $Z=X$.
In particular, it answers the question for the Moishezon manifolds by Prokhorov and Shramov (cf.~\cite{PS19}).

\begin{corollary} [{\cite[Corollary~1.2]{MPZ22}}]\label{cor: MPZCor1}
Let $X$ be a reduced compact complex space.
Then $\Aut(X)$ is strongly Jordan in the following cases (where (1) is a special case of (2)):

\begin{enumerate}
\item $X$ is Moishezon, i.e., $X$ is bimeromorphic to a projective variety.
\item $X$ is in Fujiki's class $\mathcal{C}$, i.e., $X$ is the meromorphic image of a compact K\"ahler manifold.
\end{enumerate}
\end{corollary}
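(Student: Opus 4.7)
The plan is to reduce Corollary \ref{cor: MPZCor1} directly to Theorem \ref{thm: MPZ main-thm} with the aid of an equivariant resolution of singularities. First, case (1) is subsumed by case (2): a Moishezon space is bimeromorphic to a projective manifold, which is K\"ahler, and therefore lies in Fujiki's class $\mathcal{C}$; thus it suffices to treat the Fujiki case.

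For a reduced compact complex space $X$ in Fujiki's class $\mathcal{C}$, I would invoke a functorial equivariant resolution of singularities $\pi\colon \widetilde{X}\to X$ (in the analytic category, as provided by Bierstone--Milman or W{\l}odarczyk). By functoriality, every automorphism of $X$ lifts canonically to an automorphism of $\widetilde{X}$, so one obtains a well-defined group homomorphism $\Aut(X)\to \Aut(\widetilde{X})$, which is injective since $\pi$ is bimeromorphic. Moreover, Fujiki's class $\mathcal{C}$ is a bimeromorphic invariant, so $\widetilde{X}$ is a compact complex manifold in class $\mathcal{C}$.

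Applying Theorem \ref{thm: MPZ main-thm} to the pair $(\widetilde{X},\widetilde{X})$ then yields that $\Aut(\widetilde{X})=\Aut(\widetilde{X},\widetilde{X})$ is Jordan, and consequently so is its subgroup $\Aut(X)$. For the strongly Jordan assertion, I would piggy-back on the final step of the proof of Theorem \ref{thm: MPZ main-thm}: that argument has already transferred the problem to the automorphism group of a compact K\"ahler manifold (an equivariant K\"ahler compactification of a normal bundle), where the Kim--Popov Theorem \ref{thm: kim-popov} supplies a uniform bound on the rank of finite abelian subgroups. Alternatively, one may combine Lemma \ref{lem: pre lemma 3} with the Fujiki--Lieberman control on $\Aut_0(\widetilde{X})$ and Lemma \ref{lem: pre lemma 2} to bound $\Rk(\Aut(\widetilde{X}))$, and then restrict to $\Aut(X)$.

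The step that needs the most care is the equivariant lift: one must know that the resolution is genuinely canonical and functorial, so that $\Aut(X)\to \Aut(\widetilde{X})$ is a homomorphism on the entire group rather than a piecemeal lift of individual finite subgroups. Once this is in place, and once one notes that both the Jordan constant and the finite-abelian-rank bound are inherited by subgroups, the corollary is an immediate consequence of Theorem \ref{thm: MPZ main-thm}.
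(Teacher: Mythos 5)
Your proposal matches the paper's own (one-line) argument: take an equivariant resolution to reduce to the smooth case and then apply Theorem \ref{thm: MPZ main-thm} with $Z=X$; your additional care about the functoriality of the lift $\Aut(X)\hookrightarrow\Aut(\widetilde{X})$ and about extracting the strongly Jordan conclusion from the K\"ahler reduction inside the proof of Theorem \ref{thm: MPZ main-thm} merely fills in details the paper leaves implicit. The proof is correct and takes essentially the same approach.
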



Instead of studying the above invariant subgroup, a more natural way is to study the following question:

\begin{question}\label{Q1}
Let $X$ be a compact complex manifold.
If $X$ is Moishezon (or in Fujiki's class $\mathcal{C}$), can one find a bimeromorphic model $\widetilde{X}$ of $X$
such that $\Aut(X)$ lifts to $\widetilde{X}$, and $\widetilde{X}$ is projective (or K\"ahler)?
\end{question}

Indeed, Demailly and Paun showed in the proof of \cite[Theorem 3.4]{DP04} that if a compact complex manifold $X$ admits a big $(1, 1)$-class $[\alpha]$, then there is a bimeromorphic holomorphic map $\sigma: X' \to X$ from a K\"ahler manifold $X'$ obtained by a sequence of blowups along smooth centres determined by the ideal sheaf $J$ corresponding to some K\"ahler current $T\in [\alpha]$ with analytic singularities (cf.~ \cite[Definition 2.2]{DP04}). 

Generally, the group $\operatorname{Aut}(X)$ may fail to lift via $\sigma$ because the class $[\alpha]$ is not necessarily preserved by $\operatorname{Aut}(X)$. Consequently, we focus on the subgroup 
$\operatorname{Aut}_{[\alpha]}(X)$; see Notation \ref{not: aut}.
However, one still cannot expect the lifting of $\operatorname{Aut}_{[\alpha]}(X)$, because the blown-up ideal sheaf $J$ may not be $\operatorname{Aut}_{[\alpha]}(X)$-invariant. Therefore, we need to find another Kähler model in a more natural way. The idea is to consider the ideal sheaf generated by $g^*J$ for all $g \in \operatorname{Aut}_{[\alpha]}(X)$ and then we are left to verify that the new model admits a K\"ahler form.
In this way, Jia and the second author proved the following:

\begin{theorem}[cf. {\cite[Theorem~1.1]{JM24}}] \label{thm: JM lifting theorem}
     Let $X$ be a compact complex manifold in Fujiki's class $\mathcal{C}$. Then, for any big $(1, 1)$-class $[\alpha]$ on $X$, there exists a bimeromorphic holomorphic map $\sigma: X_e \to X$ from a Kähler manifold $X_e$ such that $\operatorname{Aut}_{[\alpha]}(X)$ lifts holomorphically via $\sigma$.
\end{theorem}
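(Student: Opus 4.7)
The plan is to promote the Demailly--Paun construction to an $\operatorname{Aut}_{[\alpha]}(X)$-equivariant one. Write $G := \operatorname{Aut}_{[\alpha]}(X)$. By Demailly--Paun (the proof of \cite[Theorem 3.4]{DP04}), one can choose a K\"ahler current $T = \alpha + i\partial\bar\partial\varphi$ in $[\alpha]$ whose singularities are analytic along a coherent ideal sheaf $\mathcal{I} \subseteq \mathcal{O}_X$, and such that the principalization of $\mathcal{I}$ by a sequence of blow-ups along smooth centres produces a bimeromorphic map $\sigma_0: X' \to X$ with $X'$ K\"ahler. The obstruction to lifting $G$ is that $\mathcal{I}$ is generally not $G$-invariant.

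The first step is to replace $\mathcal{I}$ by a $G$-invariant coherent ideal sheaf. For every $g \in G$ the current $g^{*}T$ lies again in $[\alpha]$, is K\"ahler, and has analytic singularities along $g^{*}\mathcal{I}$. Set
\[
\tilde{\mathcal{I}} := \sum_{g\in G} g^{*}\mathcal{I} \;\subseteq\; \mathcal{O}_X,
\]
which is tautologically $G$-invariant. I would show $\tilde{\mathcal{I}}$ is coherent as follows: each stalk $\mathcal{O}_{X,x}$ is a Noetherian local ring, so the ascending union $\tilde{\mathcal{I}}_x = \bigcup g^{*}\mathcal{I}_x$ stabilizes; a compactness argument on $X$ then gives finitely many elements $g_1,\dots,g_N \in G$ with $\tilde{\mathcal{I}} = \sum_{i=1}^{N} g_i^{*}\mathcal{I}$, which is coherent as a finite sum of coherent sheaves.

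Next, I would apply a functorial (Hironaka-type) algorithmic principalization to $\tilde{\mathcal{I}}$, as developed by Bierstone--Milman and W\l odarczyk in the analytic setting. Since the sequence of smooth centres is canonically determined by $\tilde{\mathcal{I}}$ and the algorithm is functorial under local biholomorphisms, the $G$-invariance of $\tilde{\mathcal{I}}$ forces each centre to be $G$-invariant; hence the resulting composition $\sigma: X_e \to X$ is $G$-equivariant, and $G$ lifts holomorphically to $\operatorname{Aut}(X_e)$. Let $\tilde{E}$ be the effective Cartier divisor on $X_e$ with $\sigma^{-1}\tilde{\mathcal{I}}\cdot\mathcal{O}_{X_e} = \mathcal{O}_{X_e}(-\tilde{E})$.

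It remains to verify that $X_e$ is K\"ahler; this is the main obstacle, since the Demailly--Paun K\"ahlerness argument is tied to the particular ideal whose current is being resolved. To handle $\tilde{\mathcal{I}}$ directly, I would construct a single K\"ahler current in $[\alpha]$ whose analytic ideal is exactly $\tilde{\mathcal{I}}$, by setting
\[
\tilde{\varphi} \;:=\; \max_{1\le i\le N}\bigl(g_i^{*}\varphi + C_i\bigr)
\]
for suitable constants $C_i$ chosen so that the regularized maximum produces an $\alpha$-plurisubharmonic function, and so that $\tilde{T} := \alpha + i\partial\bar\partial\tilde{\varphi}$ dominates a positive multiple of some hermitian form. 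Locally around any point, since each $g_i^{*}\varphi$ has the form $c\log(\sum_j |f_{i,j}|^2) + O(1)$ for generators $f_{i,j}$ of $g_i^{*}\mathcal{I}$, the maximum satisfies $\tilde\varphi = c\log\bigl(\sum_{i,j}|f_{i,j}|^{2}\bigr) + O(1)$, so the analytic singularities of $\tilde T$ are precisely along $\tilde{\mathcal{I}}$. Applying the Demailly--Paun criterion to $\tilde T$ now yields a small $\epsilon>0$ such that $\sigma^{*}[\alpha] - \epsilon[\tilde E]$ is represented by a smooth positive $(1,1)$-form on $X_e$, giving a K\"ahler class and completing the proof. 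The main delicate point is ensuring that the regularized maximum construction preserves analytic singularities globally and that the Demailly--Paun K\"ahlerness estimate survives this replacement; this is where I expect most of the real work to lie.
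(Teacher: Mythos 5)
Your proposal is correct and follows essentially the same route as the paper (which only sketches the argument of \cite[Theorem 1.1]{JM24}): pass to the $\operatorname{Aut}_{[\alpha]}(X)$-invariant ideal sheaf generated by the $g^{*}\mathcal{I}$, use strong Noetherianity plus compactness to reduce to a finite sum, resolve it functorially to get equivariance, and re-run the Demailly--Paun K\"ahlerness argument on a current singular exactly along the new ideal. The only small correction is in the gluing step: since $g_i^{*}\alpha\neq\alpha$ as forms, the additive constants $C_i$ should be replaced by the smooth potentials $\chi_i$ with $g_i^{*}\alpha=\alpha+i\partial\bar\partial\chi_i$ (available by the $\partial\bar\partial$-lemma on class $\mathcal{C}$), so that each $\chi_i+\varphi\circ g_i$ is genuinely $\alpha$-plurisubharmonic with a uniform curvature lower bound; as the $\chi_i$ are bounded this does not affect the singularity analysis.
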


When \(X\) is a K\"ahler manifold with a K\"ahler form \(\alpha\),
Fujiki \cite[Theorem 4.8]{Fuj78} and Lieberman \cite[Proposition 2.2]{Lie78}, separately, proved that
\[
	[\Aut_{[\alpha]}(X):\Aut_0(X)]<\infty.
\]
Their proof heavily relies on the K\"ahler form \(\alpha\) (or at least the existence of a K\"ahler form).
Nevertheless, with the help of Theorem \ref{thm: JM lifting theorem}, we can show the following result.
\begin{corollary}\label{cor: components group}
	Let \(X\) be a compact complex manifold in Fujiki's class \(\mathcal{C}\).
	Then
	\[
		[\Aut_{[\alpha]}(X):\Aut_0(X)]<\infty
	\]
	for any big \((1,1)\)-class \([\alpha]\) on \(X\).
\end{corollary}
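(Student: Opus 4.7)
The plan is to reduce to the classical Fujiki--Lieberman theorem on the K\"ahler bimeromorphic model of $X$ produced by Theorem~\ref{thm: JM lifting theorem}. Write $G:=\Aut_{[\alpha]}(X)$, and let $\sigma\colon X_e\to X$ together with the injective lift $G\hookrightarrow\Aut(X_e)$, $g\mapsto\widetilde{g}$, be as in that theorem, so that $X_e$ is a compact K\"ahler manifold and the image $\widetilde{G}$ of $G$ sits inside $\Aut(X_e)$ as a subgroup projecting back to $G$ under pushforward by $\sigma$.

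The crucial step is to exhibit a K\"ahler class on $X_e$ fixed by $\widetilde{G}$. Unpacking the construction behind Theorem~\ref{thm: JM lifting theorem}, the morphism $\sigma$ is the principalisation of the $G$-invariant coherent ideal sheaf generated by $g^{*}J$ for $g\in G$, where $J$ is the ideal attached to a Demailly--Paun K\"ahler current with analytic singularities in $[\alpha]$. Consequently the exceptional divisor $E$ of $\sigma$ has $\widetilde{G}$-invariant cohomology class, and together with the $\widetilde{G}$-invariant pullback $\sigma^{*}[\alpha]$ the Demailly--Paun positivity argument produces a small $\varepsilon>0$ for which $[\omega]:=\sigma^{*}[\alpha]-\varepsilon[E]$ is K\"ahler on $X_e$, so that $\widetilde{G}\subseteq\Aut_{[\omega]}(X_e)$. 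Carefully verifying this invariance and K\"ahler positivity from the internal data of Theorem~\ref{thm: JM lifting theorem} will be the main technical hurdle.

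Granted such $[\omega]$, the classical Fujiki--Lieberman theorem on the compact K\"ahler manifold $X_e$ yields $[\Aut_{[\omega]}(X_e):\Aut_0(X_e)]<\infty$, hence $[\widetilde{G}:\widetilde{G}\cap\Aut_0(X_e)]<\infty$. Setting $H:=\{\,g\in G:\widetilde{g}\in\Aut_0(X_e)\,\}$, this gives $[G:H]<\infty$, and continuity of the lift ensures $\Aut_0(X)\subseteq H$. To close the argument, I would use the equivariance $\sigma^{*}\circ g^{*}=\widetilde{g}^{*}\circ\sigma^{*}$ together with the injectivity of $\sigma^{*}\colon H^{*}(X,\Q)\hookrightarrow H^{*}(X_e,\Q)$: for $g\in H$ the lift $\widetilde{g}$ lies in $\Aut_0(X_e)$, which acts trivially on $H^{*}(X_e,\Q)$, so $g^{*}=\id$ on $H^{*}(X,\Q)$, and a Lieberman-type argument transferred through $\sigma$ bounds the index of $\Aut_0(X)$ in this cohomological kernel. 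Combining these two finite-index bounds yields $[G:\Aut_0(X)]<\infty$.
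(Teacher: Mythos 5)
Your overall strategy --- lift $G=\Aut_{[\alpha]}(X)$ to the equivariant K\"ahler model $X_e$ of Theorem~\ref{thm: JM lifting theorem}, exhibit a $\widetilde{G}$-invariant K\"ahler class there, apply Fujiki--Lieberman on $X_e$, and descend --- is the intended one. For the ``main technical hurdle'' you flag in the second step, you do not actually need to reopen the construction of $X_e$: since $\widetilde{G}$ fixes $\sigma^{*}[\alpha]$ and permutes the finitely many $\sigma$-exceptional prime divisors, any Demailly--Paun K\"ahler class of the shape $\sigma^{*}[\alpha]-\sum_j c_j[E_j]$ has a finite $\widetilde{G}$-orbit contained in the convex, $\Aut(X_e)$-stable K\"ahler cone, and the average over that orbit is a $\widetilde{G}$-invariant K\"ahler class $[\omega]$. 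So the first three steps are sound.

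The genuine gap is in your last step. Having set $H=\{g\in G:\widetilde{g}\in\Aut_0(X_e)\}$ with $[G:H]<\infty$ and $\Aut_0(X)\subseteq H$, you still must bound $[H:\Aut_0(X)]$. Your route is to note $H\subseteq\Aut_{\tau}(X)$ (correct, since $\sigma^{*}$ is injective on $H^{2}(X,\Q)$) and then invoke ``a Lieberman-type argument transferred through $\sigma$'' to bound $[\Aut_{\tau}(X):\Aut_0(X)]$. But no such argument is available on $X$ itself: Lieberman's compactness argument needs a K\"ahler metric on $X$, which $X$ does not have, and the finiteness of $[\Aut_{\tau}(X):\Aut_0(X)]$ for manifolds in Fujiki's class $\mathcal{C}$ is itself a consequence of the corollary you are proving (as $\Aut_{\tau}(X)\subseteq\Aut_{[\alpha]}(X)$), while for general compact complex manifolds it is exactly the open Question~\ref{que: components group}. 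As written, this step is circular. The correct way to close the argument is to descend at the level of identity components rather than cohomology: since $\sigma$ is a bimeromorphic morphism with $\sigma_{*}\mathcal{O}_{X_e}=\mathcal{O}_{X}$, Blanchard's lemma (see also \cite{Fuj78}) yields a homomorphism $\Aut_0(X_e)\to\Aut(X)$ landing in $\Aut_0(X)$ and compatible with $\widetilde{g}\mapsto\sigma\circ\widetilde{g}\circ\sigma^{-1}$. Hence $H=\sigma_{*}\bigl(\widetilde{G}\cap\Aut_0(X_e)\bigr)\subseteq\Aut_0(X)$, so $H=\Aut_0(X)$ and $[G:\Aut_0(X)]=[G:H]<\infty$.
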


Note that Corollary \ref{cor: components group} and Theorem \ref{thm: bw} directly imply Corollary \ref{cor: MPZCor1} with different approaches.
\section{General compact complex manifolds and bimeromorphic groups} \label{sec: manifold}
Let $X$ be a (connected) compact complex manifold.
Recall that we have the following exact sequence induced from the pullback  restriction on the cohomology group
$$1\to \Aut_{\tau}(X)\to \Aut(X)\to \Aut(X)|_{H^2(X,\mathbb{Q})}\to 1$$
where $\Aut_{\tau}(X)$ is defined as the kernel of the surjective map $\Aut(X)\to \Aut(X)|_{H^2(X,\mathbb{Q})}$.
Recall that $\Aut(X)|_{H^2(X,\mathbb{Q})}\le \GL(H^2(X,\mathbb{Q}))$ has bounded finite subgroups.
So it suffices for us to focus on whether $\Aut_{\tau}(X)$ satisfies the Jordan property.
Based on Corollary \ref{cor: components group}, we have the following question.
Note that a positive answer towards Question \ref{que: components group} will directly imply the Jordan property for $\Aut(X)$. 

\begin{question} \label{que: components group}
    Let \(X\) be a compact complex manifold.
    Is $[\Aut_{\tau}(X):\Aut_0(X)]<\infty$?
\end{question}

In non-algebraic cases, compact complex surfaces still behave well. Indeed, by the classification of surfaces, Prokhorov and Shramov \cite[Theorems 1.6 and 1.7]{PS21} showed that $\Aut(X)$ (and even the group $\Bir(X)$) are Jordan for any non-projective compact complex surface $X$.
Nevertheless, Question \ref{que: components group} remains unknown even for surfaces.

\begin{theorem} [cf.~{\cite[Theorems 1.6 and 1.7]{PS21}}]\label{thm: PS theorem 1.7}
Let $X$ be a connected compact complex surface. Then $\Bir(X)$ is not Jordan if and only if $X$ is bimeromorphic to $E \times \mathbb{P}^1$, where $E$ is an elliptic curve. Moreover, there always exists a constant $R = R(X)$ such that every finite subgroup of $\Bir(X)$ is generated by at most $R$ elements.
\end{theorem}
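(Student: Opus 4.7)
The plan is to exploit the Enriques–Kodaira classification of compact complex surfaces and reduce each subcase either to the algebraic results of Section \ref{sec: bir}, to the Fujiki's class $\mathcal{C}$ results of Section \ref{sec: fujiki}, or to a direct appeal to Theorem \ref{thm: bw} applied to the automorphism group of a well-chosen minimal model. Throughout I would freely replace $X$ by any bimeromorphic model, since $\Bir(\cdot)$ is a bimeromorphic invariant. The ``only if'' direction is immediate: if $X$ is bimeromorphic to $E\times \PP^1$, then $\Bir(X)\cong \Bir(E\times\PP^1)$, which is non-Jordan by the Zarhin-type construction recalled in Theorem \ref{thm: bir surface} (translations by $p$-torsion of $E$ combined with suitably twisted involutions of $\PP^1$ yield finite subgroups whose minimal abelian index is unbounded in $p$).

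For the ``if'' direction I would split by Kodaira dimension $\kappa(X)$. When $\kappa(X)\ge 0$, the unique minimal model $X_{\min}$ satisfies $\Bir(X)=\Aut(X_{\min})$, because $K_{X_{\min}}$ is nef and every bimeromorphism preserves pluricanonical forms. If $X_{\min}$ is K\"ahler, Corollary \ref{cor: MPZCor1} yields strong Jordanness; if $X_{\min}$ is non-K\"ahler, it is a (primary or secondary) Kodaira surface or a non-K\"ahler properly elliptic surface, and in each case $\Aut(X_{\min})$ is a real Lie group with finitely many connected components, analysed via the natural torus-bundle or elliptic-fibration structure, so Theorem \ref{thm: bw} applies. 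When $\kappa(X)=-\infty$ and $X$ is K\"ahler, $X$ is projective and, by hypothesis, not birational to $E\times\PP^1$, so Theorem \ref{thm: bir surface} finishes the argument. When $\kappa(X)=-\infty$ and $X$ is non-K\"ahler, $X$ belongs to Class VII; I would argue that every finite subgroup of $\Bir(X)$ extends holomorphically to a minimal Class VII model whose $\Aut$-group is again a real Lie group with bounded component group, and Theorem \ref{thm: bw} once more delivers the Jordan property.

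For the uniform bound $R=R(X)$ on the number of generators of finite subgroups of $\Bir(X)$, after regularising a finite $G\le \Bir(X)$ to act biholomorphically on some bimeromorphic model $X'$, I would study the representation $G\to \GL(H^2(X',\mathbb{Q}))$. Its image is of bounded order in terms of $b_2(X')$ by Minkowski's theorem (Lemma \ref{lem: pre lemma 2}(2) with $K=\mathbb{Q}$), hence of bounded rank; its kernel sits in $\Aut_{\tau}(X')$ and, after intersecting with $G$, inside a complex Lie subgroup of $\Aut_0(X')$ of rank controlled by $\dim X'=2$. Subadditivity of $\Rk$ (Lemma \ref{lem: pre lemma 1}(5)) then yields a uniform bound on $\Rk(G)$ independent of $X'$, which bounds the number of generators of the large abelian subgroup supplied by Jordanness and, combining with the bounded index, bounds the number of generators of $G$ itself.

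The main obstacle will be the non-K\"ahler range, especially Class VII surfaces, where the Enriques–Kodaira classification is still incomplete (the global spherical shell conjecture remains open in general) and the reduction to a canonical minimal model is less mechanical than in the K\"ahler case. To circumvent this I would exploit the algebraic reduction map $X\dashrightarrow Y$, which is $\Bir$-equivariant and has $\dim Y\in\{0,1,2\}$, and induct on $\dim Y$ using the exact sequence $1\to G_1\to \Bir(X)\to \Bir(X)|_Y\to 1$ together with Lemma \ref{lem: pre lemma 1}, so that the genuinely new input from non-K\"ahler geometry is concentrated in the analysis of general fibres, which are curves or elliptic-fibred surfaces already handled above.
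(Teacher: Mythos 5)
The paper offers no proof of this statement: it is quoted verbatim from Prokhorov--Shramov \cite[Theorems 1.6 and 1.7]{PS21}, so your proposal can only be measured against the strategy of that paper, whose broad architecture (Enriques--Kodaira classification, reduction to minimal models, algebraic reduction, Lie-group input) you do reproduce. Within that architecture, however, there is a genuine gap at the crucial non-K\"ahler step. You dispose of Kodaira surfaces, non-K\"ahler properly elliptic surfaces and Class VII surfaces by asserting that $\Aut(X_{\min})$ ``is a real Lie group with finitely many connected components'' and invoking Theorem \ref{thm: bw}. But finiteness of $[\Aut_{\tau}(X):\Aut_0(X)]$ is exactly Question \ref{que: components group}, which this survey explicitly states ``remains unknown even for surfaces''; and $[\Aut(X):\Aut_0(X)]$ itself is typically infinite already for tori and primary Kodaira surfaces, since $\Aut(X)$ can act on $H^*(X,\mathbb{Z})$ with infinite image. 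What Theorem \ref{thm: bw} actually requires is $\Bd(\Aut(X)/\Aut_0(X))<\infty$, and establishing that (or bypassing it for finite subgroups) is the real content of \cite{PS21}: one bounds the image in $\GL(H^*(X,\mathbb{Z}))$ by Minkowski and then controls $\Aut_{\tau}(X)$ geometrically --- via the action on the finite set of curves when $a(X)=0$, and on the base of the algebraic reduction when $a(X)=1$. Your sketch treats this as automatic, which is precisely where the difficulty lives.

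The argument for the generator bound $R(X)$ has two related defects. First, the kernel of $G\to\GL(H^2(X',\mathbb{Q}))$ lies in $\Aut_{\tau}(X')$, not in $\Aut_0(X')$; sliding from one to the other again begs Question \ref{que: components group}. Second, your rank bound for the image depends on $b_2(X')$, and $b_2(X')$ is unbounded as $X'$ ranges over the bimeromorphic models on which the various finite subgroups $G$ are regularized, so the claimed uniformity in $X'$ does not follow. To repair this one must first descend each $G$ to a model of bounded Picard number --- e.g.\ the (unique) minimal model when $\kappa(X)\ge 0$ or $X$ is of Class VII, and a $G$-minimal model via $G$-equivariant contraction of $(-1)$-curve orbits in the ruled case --- and only then apply Minkowski and the subadditivity of $\Rk$ from Lemma \ref{lem: pre lemma 1}(5). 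The remaining pieces of your outline (the non-Jordanness of $\Bir(E\times\mathbb{P}^1)$, the identification $\Bir(X)=\Aut(X_{\min})$ for $\kappa\ge 0$, projectivity of K\"ahler surfaces with $\kappa=-\infty$, and the use of the $\Bir$-equivariant algebraic reduction with Lemma \ref{lem: pre lemma 1}) are sound.
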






In dimension three, Prokhorov and Shramov \cite{PS22} showed the Jordan property for $\Bir(X)$ when $X$ is a non-uniruled
K\"ahler threefold with non-zero irregularity.
In higher dimensions with large Kodaira dimension, Loginov \cite[Theorem~1.2]{Log25} verified the Jordan property for the bimeromorphic automorphism group.
The proof is based on the study of the bounded finite subgroup property for the pluricanonical representation.

\begin{theorem}[cf.~{\cite[Theorem~1.2]{Log25}}]\label{thm: log main}
Let $X$ be an $n$-dimensional compact complex variety with the Kodaira dimension $\kappa(X) \geq n - 2$. Then the group $\Bir(X)$ is Jordan.
\end{theorem}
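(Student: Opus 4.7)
The plan is to use the Iitaka fibration to decompose the $\Bir(X)$-action into an action on a base $Y$ (controlled by the pluricanonical representation) and a fiberwise action on a generic fiber of dimension at most $2$ and Kodaira dimension $0$. After passing to a smooth bimeromorphic model, I would consider the Iitaka fibration $\varphi: X \dashrightarrow Y$ coming from $|mK_X|$ for a sufficiently divisible $m$, with $\dim Y = \kappa(X) \ge n-2$ and generic fiber $F$ of dimension $d := n - \kappa(X) \in \{0,1,2\}$ and $\kappa(F) = 0$. Since $\varphi$ is canonical, it is $\Bir(X)$-equivariant and produces an exact sequence
\begin{equation*}
1 \to N \to \Bir(X) \to G \to 1,
\end{equation*}
where $G := \Bir(X)|_Y \le \Bir(Y)$ and $N$ is the subgroup acting trivially on $Y$.

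For the quotient $G$, I would invoke the pluricanonical representation $\rho_m: \Bir(X) \to \PGL(H^0(X, mK_X))$: its image factors through $G$, and by the Deligne--Ueno finiteness theorem together with its generalizations to compact complex varieties of non-negative Kodaira dimension (due to Fujino and others), this image has bounded finite subgroups. Hence $\Bd(G) < \infty$, and in particular $G$ is trivially Jordan.

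For the kernel $N$, its elements restrict to bimeromorphic self-maps of the generic fiber $X_\eta$ over the function field $k(Y)$. If $d = 0$, then $N$ is finite. If $d = 1$, $X_\eta$ is a smooth genus-one curve, so $\Aut(X_\eta/k(Y))$ is an extension of the abelian group $X_\eta(k(Y))$ by a finite group of order at most $24$, hence Jordan. If $d = 2$, $X_\eta$ is a surface of Kodaira dimension $0$; on its relative minimal model $\Bir = \Aut$, and by the classification of such surfaces together with the algebraic-group structure of their automorphism groups (underlying Theorem~\ref{thm: automorphism projective}, supplemented by Theorem~\ref{thm: PS theorem 1.7} for non-projective fibers), this $\Aut$ is Jordan with a uniform constant. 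Combining via Lemma~\ref{lem: pre lemma 1}(1) then concludes that $\Bir(X)$ is Jordan.

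The main obstacle I expect is establishing a \emph{uniform} Jordan bound for the fiberwise action of $N$: the action lives over the non-closed function field $k(Y)$, so one must either descend Jordan bounds from the geometric generic fiber via Galois-theoretic arguments, or use a family-version / equivariant relative MMP to uniformize Jordan constants across fibers. A secondary subtlety is ensuring the Deligne--Ueno pluricanonical representation finiteness holds in the full compact complex (possibly non-Kähler) generality required by the theorem, which may require first producing a good model of $Y$ of general type before invoking standard finiteness.
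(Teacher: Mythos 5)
This survey does not prove Theorem~\ref{thm: log main}; it only cites Loginov and records that ``the proof is based on the study of the bounded finite subgroup property for the pluricanonical representation.'' Your skeleton --- Iitaka fibration, pluricanonical representation to control the base, fibers of dimension at most $2$ and Kodaira dimension $0$ --- is indeed the right one and agrees with that indication. But the step you dispose of in one sentence is precisely the content of the theorem: you invoke ``the Deligne--Ueno finiteness theorem together with its generalizations to compact complex varieties of non-negative Kodaira dimension.'' Finiteness of the image of $\rho_m$ is known for projective, Moishezon, and Fujiki class~$\mathcal{C}$ manifolds, but it is \emph{not} available for arbitrary compact complex varieties, which is exactly the generality of the statement; Loginov's main technical input is to establish the weaker bounded-finite-subgroup property of $\operatorname{Im}(\rho_m)$ in this setting. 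As written, your proof assumes the hard part. (A smaller point in the same place: $\Bir(X)|_Y$ is a \emph{quotient} of $\operatorname{Im}(\rho_m)$, and bounded finite subgroups need not pass to quotients; you should instead take the kernel in your exact sequence to be $\ker(\rho_m)$, which still acts fiberwise.)

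The second gap is in the fiberwise analysis. Your language --- generic fiber $X_\eta$ over $k(Y)$, the group of $k(Y)$-points of a genus-one curve, relative minimal models --- belongs to the algebraic category and does not make sense when $X$ is a general compact complex variety: the base $Y$ is projective (it lives in $\mathbb{P}(H^0(X,mK_X)^*)$), but $X$ and the fibers need not be Moishezon or K\"ahler. The correct move is to restrict a finite subgroup $H$ of the kernel to a very general fiber $F$ (a compact complex curve or surface with $\kappa(F)=0$), check that for suitable $F$ the restriction $H\to\Bim(F)$ is injective, and then apply Theorem~\ref{thm: PS theorem 1.7} (or the surface classification) to $F$. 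This forces exactly the uniformity problem you flag at the end but do not resolve: the Jordan constant of $\Bim(F)$ must be bounded independently of which fiber $F$ the argument selects. For $\kappa=0$ surfaces this can be extracted from the classification of their minimal models (tori, K3, Enriques, bielliptic, Kodaira surfaces), but it is an argument, not a citation. Once both points are supplied, the conclusion via Lemma~\ref{lem: pre lemma 1}(1) is correct.
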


Jia introduced the notion of T-Jordan property. 

\begin{definition} ({\cite[Definition~1.2]{Jia23}})
A group \( G \) is called \emph{T-Jordan} (alternatively, we say that \( G \) has the \emph{T-Jordan property}) if there is a constant \( J(G) \) such that every torsion subgroup \( H \) of \( G \) has an abelian subgroup \( H_1 \) with the index \( [H : H_1] \leq J(G) \).
\end{definition}

Lee \cite{Lee76} proved the T-Jordan property for connected real Lie groups.
\begin{theorem}\textup{(\cite{Lee76})}\label{thm: lee}
    A connected real Lie group is T-Jordan.
\end{theorem}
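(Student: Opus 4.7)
The plan is to reduce from a connected real Lie group $G$ to a compact connected Lie group via the Iwasawa--Mostow structure theorem, then to $\mathrm{U}(n)$ via Peter--Weyl, and finally to combine Jordan's classical theorem (Lemma~\ref{lem: pre lemma 2}(1)) with an inverse-limit argument on the directed system of finite subgroups of a locally finite torsion group.

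First, by Iwasawa--Mostow, $G$ admits a maximal compact subgroup $K$, unique up to conjugation, such that every compact subgroup of $G$ is conjugate into $K$, and $G$ is diffeomorphic to $K \times \mathbb{R}^d$. The crucial intermediate step is to show that any torsion subgroup $H \le G$ has compact closure $\overline{H}$, after which one may conjugate to assume $H \le K$. For this I would argue that every one-parameter subgroup of $\overline{H}_0$ generates a compact subgroup: otherwise, it would contain limits of torsion elements along a non-compact $\mathbb{R}$-orbit in $\overline{H}_0$, contradicting Schur's local-finiteness of torsion subgroups in the adjoint representation $\mathrm{Ad}: G \to \GL(\mathfrak{g})$. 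Hence $\overline{H}_0$ is a torus, and a uniform bound on the discrete quotient $\overline{H}/\overline{H}_0$ via the linear image of $\overline{H}$ forces $\overline{H}$ to be compact. Once reduced to the compact connected case, Peter--Weyl embeds $G=K$ as a closed subgroup of some $\mathrm{U}(n)$.

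For a torsion subgroup $H \le \mathrm{U}(n)$, Schur's theorem gives local finiteness, so $H$ is the directed union of its finite subgroups $F$. Jordan furnishes, for each such $F$, a normal abelian subgroup $A_F \lhd F$ with $[F:A_F] \le \Con_n$, and when $F \le F'$, the intersection $A_{F'} \cap F$ is again a normal abelian subgroup of $F$ of index at most $\Con_n$. One thus obtains a projective system of non-empty finite sets $\mathcal{A}_F := \{A \lhd F : A \textup{ abelian},\ [F:A] \le \Con_n\}$ whose inverse limit is non-empty by Tychonoff; a compatible selection $(A_F)_F$ yields $A := \bigcup_F A_F$ as a normal abelian subgroup of $H$ with $[H:A] \le \Con_n$. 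The main obstacle is the compactness reduction in the first step: verifying that the closure of a torsion subgroup in an arbitrary connected Lie group is compact requires a delicate analysis of non-torsion limits in $\overline{H}_0$ and of the component group $\overline{H}/\overline{H}_0$, using the linear structure coming from the adjoint action; the remaining steps then proceed classically, yielding a bound $\J(G) \le \Con_n$ depending only on the dimension $n$ of a faithful unitary representation of a maximal compact subgroup of $G$.
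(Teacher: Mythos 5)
Your overall architecture is the right one, and it is essentially the route taken in Lee's paper \cite{Lee76}, which this survey merely cites: conjugate the torsion subgroup into a maximal compact subgroup $K$ of $G$, embed $K$ into some $\mathrm{U}(n)$ by Peter--Weyl, and then combine Schur's local-finiteness theorem with Jordan's theorem and the compatible-family (inverse limit) argument. That second half of your write-up is correct and standard, and it does yield a bound $\Con_n$ with $n$ depending only on $G$ once the reduction to $K$ is in place.

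The gap is in the first step, and it is not a minor one: the assertion that every torsion subgroup of a connected Lie group has compact closure \emph{is} the substance of Lee's theorem, and the mechanism you propose for it does not work. The closure $\overline{H}$ of a torsion group is in general not a torsion group --- limits of torsion elements need not be torsion --- so the presence of ``limits of torsion elements along a non-compact $\mathbb{R}$-orbit'' in $\overline{H}_0$ contradicts nothing; Schur's theorem is a statement about abstract torsion subgroups of $\GL_n$, not about their closures. Moreover, the adjoint representation only sees $H$ modulo the center $Z(G)=\Ker(\mathrm{Ad})$, which may be non-compact, so compactness of $\overline{\mathrm{Ad}(H)}$ (which does follow from Schur, Jordan and simultaneous diagonalization of an abelian torsion linear group) does not control $\overline{H}\cap Z(G)$; your appeal to ``the linear image of $\overline{H}$'' to bound the component group therefore leaves exactly the central part unaddressed. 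Nor can one sidestep compactness by pulling back a normal abelian subgroup of $\mathrm{Ad}(H)$ to $H$: that produces only a nilpotent subgroup of class at most $2$ of bounded index, and the finite Heisenberg (theta) groups show that torsion central extensions of abelian by abelian can require abelian subgroups of unbounded index. So the compactness of $\overline{H}$ --- equivalently, that $H$ is conjugate into $K$ --- needs a genuine structural argument, and as written your proof is incomplete at precisely the point you yourself identify as the main obstacle.
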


Jia asked the following question:

\begin{question}[cf.~{\cite[Conjecture 1.3]{Jia23}}] \label{que:t_jordan}
	Let \(X\) be a compact complex manifold. Does $\Aut(X)$ satisfy the T-Jordan property?
\end{question}

Jia and Meng confirmed the T-Jordan property for Fujiki's class \(\mathcal{C}\) (cf.~\cite[Corollary~1.5]{JM24}). 

We introduce the following notations as in \cite{Jia23}: Let \(\Xi\) be the set of smooth compact complex surfaces \(X\) in class \uppercase\expandafter{\romannumeral7} with the algebraic dimension \(a(X) = 0\) and the second Betti number \(b_2(X) > 0\). Let \(\Xi_0 \subseteq \Xi\) be those surfaces which have no curve.
Note that \(\Xi_0=\emptyset\) if the Global Spherical Shell conjecture is true and it is known when $b_2(X)=1$ by Teleman \cite{Tel05}.

\begin{proposition}[{\cite[Proposition~1.5]{Jia23}}]
	Let \(X\) be a compact complex surface which is not in \(\Xi_0\).
	Then \(\Aut(X)\) is T-Jordan.
\end{proposition}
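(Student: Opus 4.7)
The plan is to reduce the T-Jordan property for $\Aut(X)$ to Lee's Theorem~\ref{thm: lee} by establishing
\[
 [\Aut_\tau(X):\Aut_0(X)]<\infty. \qquad (\star)
\]
Recall the exact sequence
\[
 1\to \Aut_\tau(X)\to \Aut(X)\to \Aut(X)|_{H^2(X,\mathbb{Q})}\to 1.
\]
The quotient embeds in $\GL(H^2(X,\mathbb{Q}))$, whose periodic subgroups are locally finite by Schur's theorem and whose finite subgroups are uniformly bounded by Minkowski (Lemma~\ref{lem: pre lemma 2}(2)); hence every torsion subgroup of the quotient is in fact finite, of order at most some bound $M=M(X)$. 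A routine extension argument (intersect a torsion subgroup of $\Aut(X)$ with $\Aut_\tau(X)$, apply the T-Jordan constant there, and multiply by $M$) then shows that T-Jordan for $\Aut(X)$ follows from T-Jordan for $\Aut_\tau(X)$. Granting $(\star)$, $\Aut_\tau(X)$ is a real Lie group with finitely many components whose identity component $\Aut_0(X)$ is a connected real Lie group; Lee's Theorem~\ref{thm: lee} gives that $\Aut_0(X)$ is T-Jordan, and the same finite-extension trick then yields T-Jordan for $\Aut_\tau(X)$.

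\textbf{Verification of $(\star)$ by Enriques-Kodaira classification.} First, if $X$ is K\"ahler, equivalently $b_1(X)$ is even, then $X$ lies in Fujiki's class $\mathcal{C}$ and any K\"ahler class $[\omega]$ is fixed by $\Aut_\tau(X)$ (which acts trivially on $H^2(X,\mathbb{R})$), so $\Aut_\tau(X)\subseteq \Aut_{[\omega]}(X)$; Corollary~\ref{cor: components group} (or the classical Fujiki-Lieberman theorem) then yields $(\star)$. Next, if $X$ is a primary or secondary Kodaira surface, the Albanese (respectively Iitaka) fibration $\pi\colon X\to B$ is an $\Aut(X)$-equivariant elliptic fiber bundle, and standard analysis of the induced sequence $1\to K\to \Aut(X)\to \Aut(X)|_B$ gives finiteness of components of both factors. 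If $X$ is class~VII with $b_2(X)=0$, then by Bogomolov, Li-Yau-Zheng and Teleman $X$ is a Hopf or Inoue surface, and $\Aut(X)$ has been described explicitly in each case with finitely many components. Finally, if $X$ is class~VII with $b_2>0$ and $X\notin \Xi_0$, then either $a(X)>0$ and $X$ carries a unique elliptic fibration (handled as in the Kodaira case), or $a(X)=0$ and $X$ contains a compact curve; in the latter, the work of Nakamura and Dloussky ensures that the union $D$ of all compact curves in $X$ is a finite invariant divisor, and the trivial action of $\Aut_\tau(X)$ on $H^2(X,\mathbb{Q})$ yields a homomorphism into the finite symmetric group on irreducible components of $D$, whose kernel one must show lies inside $\Aut_0(X)$ up to finite index.

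\textbf{Main obstacle.} The decisive case is the last one: class VII surfaces with $b_2>0$ containing a curve. Here one has to exploit the rigidity of the (conjecturally Kato) curve configuration, typically a cycle of rational curves arising from a global spherical shell, to argue that automorphisms fixing each irreducible component together with its normal-bundle data must belong to a finite extension of $\Aut_0(X)$. This requires the detailed geometric structure developed by Dloussky, Oeljeklaus and Teleman, and the finiteness of the curve set on such surfaces. By contrast, the K\"ahler case reduces directly to Fujiki-Lieberman, and the Kodaira and $b_2=0$ class~VII cases reduce to explicit computations of $\Aut(X)$ in the known classifications, so these steps are essentially routine once the reduction $(\star)\Rightarrow$ T-Jordan is in place.
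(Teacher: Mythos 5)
Your overall architecture --- pass to $\Aut_\tau(X)$ using the bounded torsion of $\Aut(X)|_{H^2(X,\mathbb{Q})}$, then invoke Lee's Theorem~\ref{thm: lee} after establishing $(\star)\colon [\Aut_\tau(X):\Aut_0(X)]<\infty$ --- founders on $(\star)$ itself. As stated in Section~\ref{sec: manifold}, Question~\ref{que: components group}, which is exactly $(\star)$, ``remains unknown even for surfaces''; a positive answer would already give the Jordan property for $\Aut(X)$ directly. So you are proposing to deduce the proposition from a statement that is an open problem in precisely the dimension at hand, and your own write-up confirms this: in the decisive case (class VII, $b_2>0$, $a(X)=0$, containing a curve --- the only place where the hypothesis $X\notin\Xi_0$ is actually used) you say the kernel of the action on the components of the curve locus is something ``one must show lies inside $\Aut_0(X)$ up to finite index.'' That unproved assertion is the entire content of the proposition in the non-K\"ahler, non-elliptic case. (The parts you do carry out are fine: the K\"ahler case via Corollary~\ref{cor: components group} or Fujiki--Lieberman, and the finiteness of torsion subgroups of $\Aut(X)|_{H^2(X,\mathbb{Q})}$ via Schur plus Minkowski as in Lemma~\ref{lem: pre lemma 2}(2).)

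The argument of \cite{Jia23} does not go through $(\star)$, and the hypothesis $X\notin\Xi_0$ points to the actual mechanism. Instead of bounding the component group of $\Aut_\tau(X)$ once and for all, one works with a given torsion subgroup $G$ directly: on a surface of algebraic dimension zero the curve locus is a finite union of curves, so a finite-index subgroup of $G$ (with index bounded in terms of $X$ only, since $G$ acts on $H^2(X,\mathbb{Q})$ through a group with bounded torsion) preserves a single curve $C$; restricting to $C$ and then linearizing along $C$ via the normal bundle --- the same device used in the proof of Theorem~\ref{thm: MPZ main-thm} following Mundet i Riera --- reduces everything to Jordan-type bounds for low-dimensional linear and curve automorphism groups, the point being that a torsion group acting trivially on $C$ and on its normal bundle is trivial. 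This is why a curve is indispensable and why the bound degenerates on $\Xi_0$ (cf.\ the remark after Theorem~\ref{thm: jia vir abelian}). To repair your proof you must either prove $(\star)$ for class VII surfaces with $b_2>0$ containing curves --- which would settle a nontrivial case of Question~\ref{que: components group} --- or replace your final step by a direct torsion-subgroup argument of this kind.
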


\begin{theorem}[{\cite[Theorem~1.7]{Jia23}}] \label{thm: jia vir abelian}
	Let \(X\) be a smooth compact complex surface.
	Then any torsion subgroup \(G\leq \Aut(X)\) admits an abelian subgroup $H$ with finite index $I=[G:H]$.
\end{theorem}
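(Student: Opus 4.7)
The plan is a classification-driven reduction: a cohomological argument first pushes $G$ into the cohomologically trivial part $\Aut_\tau(X)$, and then the classification of compact complex surfaces is used, applying the proposition preceding the theorem to all surfaces outside the class $\Xi_0$ and a direct Lie-theoretic analysis to the (possibly empty) remaining cases.

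\emph{Step 1 (Reduction to $\Aut_\tau(X)$).} Given a torsion subgroup $G \leq \Aut(X)$, consider the natural action
\[
\rho \colon G \longrightarrow \operatorname{GL}(H^2(X,\mathbb{Z})).
\]
By Minkowski (Lemma~\ref{lem: pre lemma 2}(2)), the integral group $\operatorname{GL}(H^2(X,\mathbb{Z}))$ is virtually torsion-free, so every torsion subgroup of it is finite. Hence $\rho(G)$ is finite and $G_0 := G \cap \Aut_\tau(X)$ has finite index in $G$; replacing $G$ by $G_0$, we may assume $G \leq \Aut_\tau(X)$.

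\emph{Step 2 (Surfaces outside $\Xi_0$).} If $X \notin \Xi_0$, then by the proposition immediately preceding the theorem (\cite[Proposition~1.5]{Jia23}), $\Aut(X)$ is T-Jordan; in particular the torsion subgroup $G$ is already virtually abelian, with index uniformly bounded.

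\emph{Step 3 (Surfaces in $\Xi_0$).} Suppose now $X \in \Xi_0$, so $X$ is a minimal compact complex surface of class VII with $a(X)=0$, $b_2(X)>0$, and no curves. For class VII surfaces one computes via Noether's formula and the index theorem that $b_+(X)=0$, so the intersection form on $H^2(X,\mathbb{R})$ is negative definite. Its integral orthogonal group is therefore finite, whence $\Aut(X)\to \operatorname{GL}(H^2(X,\mathbb{Z}))$ has finite image and $[\Aut(X):\Aut_\tau(X)]<\infty$. By Bochner--Montgomery, $\Aut(X)$ is a finite-dimensional complex Lie group. Since $X$ has no curves, every orbit of $\Aut_0(X)$ has closure which is either a single point or all of $X$; in particular every orbit is fixed or dense, which severely rigidifies the connected group. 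Using this rigidity together with $a(X)=0$, one shows $[\Aut_\tau(X):\Aut_0(X)]<\infty$. Granting this, $G \cap \Aut_0(X)$ is a torsion subgroup of finite index in $G$ sitting inside a connected real Lie group, so Lee's theorem (Theorem~\ref{thm: lee}) supplies an abelian subgroup of finite index, and $G$ is virtually abelian.

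\emph{Main obstacle.} The hard step is the finiteness of $\Aut_\tau(X)/\Aut_0(X)$ for $X \in \Xi_0$, which is an instance of the open Question~\ref{que: components group}. The argument must exploit the very rigid geometry of $\Xi_0$-surfaces --- no curves, $a(X)=0$, and negative definite intersection form --- to rule out infinitely many components. Note that conjecturally $\Xi_0 = \emptyset$ by the Global Spherical Shell conjecture (confirmed for $b_2(X)=1$ by Teleman~\cite{Tel05}), in which case Step~3 is vacuous and the theorem reduces to Step~2.
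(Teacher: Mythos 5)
Your Steps 1 and 2 are sound, but Step 3 contains a genuine gap that you flag yourself and then do not close. The assertion $[\Aut_{\tau}(X):\Aut_0(X)]<\infty$ for $X\in\Xi_0$ is exactly an instance of Question~\ref{que: components group}, and the surrounding text states explicitly that this question ``remains unknown even for surfaces.'' Writing ``using this rigidity \dots one shows'' is not a proof: the observation that orbits of $\Aut_0(X)$ are points or dense says nothing about the component group $\Aut_{\tau}(X)/\Aut_0(X)$, which is where the difficulty lives (for $X\in\Xi_0$ the group $\Aut(X)$ is discrete, so the issue is precisely an infinite discrete group acting trivially on $H^2$, and negative definiteness of the intersection form has already been used up in getting into $\Aut_\tau(X)$).

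There is also a structural reason your route cannot be the right one. If Step 3 went through, then Lee's theorem (Theorem~\ref{thm: lee}) applied to the connected Lie group $\Aut_0(X)$ would give an abelian subgroup of $G$ whose index is bounded by a constant depending only on $X$ (the T-Jordan constant of $\Aut_0(X)$ multiplied by the two finite indices from Steps 1 and 3). That would prove the T-Jordan property for \emph{every} compact complex surface, i.e.\ resolve Question~\ref{que:t_jordan} in dimension two and render the exclusion of $\Xi_0$ in \cite[Proposition~1.5]{Jia23} unnecessary. The remark immediately following Theorem~\ref{thm: jia vir abelian} records the opposite: for $X\in\Xi_0$ the index $I$ depends on the group $G$, not only on the surface $X$. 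So the actual argument in the $\Xi_0$ case must work group-by-group with an arbitrary torsion subgroup of the discrete group $\Aut_{\tau}(X)$ (e.g.\ exploiting that its finitely generated subgroups are finite linear groups), rather than by a uniform reduction to a connected Lie group. As written, your proof establishes the theorem only for $X\notin\Xi_0$, which is the content of \cite[Proposition~1.5]{Jia23} and not of Theorem~\ref{thm: jia vir abelian}.
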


\begin{remark}
    When $X\in \Xi_0$, the index $I$ in Theorem \ref{thm: jia vir abelian} 
    depends on the group $G$ (not just on the surface $X$).
\end{remark}






Note that under settings of non-compact (connected) complex manifolds, or diffeomorphism groups of compact Riemannian manifolds, the Jordan property no longer holds in general.
We refer to \cite{CPS14}, \cite{Pop15}, \cite{Pop18} and \cite{Zar19} for the counter examples; see also \cite{BZ17}, \cite{BZ21}, \cite{MiRT15}, \cite{MiR19} and \cite{MiR20} for positive cases. Here, we highlight a general result by Mundet i Riera.

\begin{theorem}[{\cite[Theorem~1.2]{MiR19}}] \label{thm: MiR}
    Let $X$ be a compact real manifold with non-zero Euler characteristic.
    Then the group of ($C^{\infty}$-)diffeomorphisms of $X$ is Jordan.
\end{theorem}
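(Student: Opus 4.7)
The plan is to reduce the study of finite subgroups of $\operatorname{Diff}(X)$ to compact Lie groups via metric averaging, and then exploit $\chi(X) \neq 0$ to locate fixed points at which linearization gives embeddings into a fixed orthogonal group.

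Given a finite subgroup $G \leq \operatorname{Diff}(X)$, I would first average any Riemannian metric over $G$ to obtain a $G$-invariant metric $g$; by the Myers--Steenrod theorem, $K := \operatorname{Isom}(X, g)$ is a compact Lie group containing $G$, and $\dim K \leq n(n+1)/2$ where $n = \dim X$, uniformly in $G$, since an isometry of a connected Riemannian manifold is determined by its $1$-jet at any point. The hypothesis $\chi(X) \neq 0$ enters through the equivariant identity $\chi(X^T) = \chi(X)$ for any torus $T \leq K^0$ (Lefschetz applied to a topological generator of $T$), forcing $X^T \neq \emptyset$. Fixing $p \in X^T$ and differentiating at $p$ embeds the isotropy $K_p$ isometrically into $O(T_pX) \cong O(n)$, so by the classical Jordan theorem (cf.~Lemma~\ref{lem: pre lemma 2}(1)) every finite subgroup of $K_p$ admits a normal abelian subgroup of index at most $\J(n)$.

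To promote this to an arbitrary finite $G \leq K$ which need not stabilize $p$, I would proceed by induction on $n = \dim X$. If $K^0$ is non-trivial, choose a maximal torus $T \leq K^0$ and pass to a $K$-invariant smooth submanifold of strictly smaller dimension and non-vanishing Euler characteristic extracted from the equivariant stratification of $X$ by $K$-orbit types (such as a connected component of the singular stratum, or an $N_K(T)$-invariant piece of $X^T$); a finite-index subgroup of $G$ acts on this submanifold, with kernel controlled by the Jordan constant for the compact Lie group $K$ of bounded dimension, so the inductive hypothesis applies. If $K^0$ is trivial, so that $K$ itself is finite, Smith theory provides a fixed point for every $p$-subgroup of $G$ with $p \nmid \chi(X)$, and linearizing at such a fixed point embeds each Sylow subgroup into $O(n)$, from which the Jordan structure of $G$ can be reassembled.

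The main obstacle is obtaining a bound independent of $G$, given that both the isometry group $K$ and its component group $K/K^0$ depend on the averaged metric and hence on $G$. Mundet i Riera's resolution exploits the rigidity provided by $\chi(X) \neq 0$: every step of the inductive reduction $X \rightsquigarrow X^T$ (or a suitable refinement thereof) yields a submanifold of strictly smaller dimension whose Euler characteristic is again non-zero, so the inductive constants depend only on $n$ and on the topology of $X$, producing a final uniform Jordan constant $C(X)$ that witnesses the Jordan property of $\operatorname{Diff}(X)$.
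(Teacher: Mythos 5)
First, a caveat: the survey does not prove Theorem~\ref{thm: MiR} at all --- it only quotes it from \cite{MiR19} --- so your proposal has to be measured against Mundet i Riera's actual argument rather than anything in this paper. Your opening moves are the right ones and do occur there: averaging a Riemannian metric to place $G$ inside a compact isometry group, the Lefschetz/Smith fixed-point statements attached to $\chi(X)\neq 0$, and linearization at a fixed point to reduce to the classical Jordan theorem for $O(n)$ (Lemma~\ref{lem: pre lemma 2}(1)). But the three places where the real difficulty lives are exactly the places you pass over.

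(1) ``From which the Jordan structure of $G$ can be reassembled'' is not a step, and as stated it is false: a finite group all of whose Sylow subgroups contain normal abelian subgroups of index at most $\J(n)$ need not contain an abelian subgroup of bounded index --- finite simple groups such as $\mathrm{PSL}_2(\mathbb{F}_p)$ have very tame Sylow subgroups and no large abelian subgroups. Passing from Sylow data to the whole group is the heart of the matter; in \cite{MiR19} this is done via the CFSG-based ``boosting'' theorem of \cite{MiRT15}, which reduces the Jordan property for the family of finite groups acting on $X$ to the subfamily of groups whose order has at most two prime divisors, and one must then genuinely treat $\{p,q\}$-groups, not just $p$-groups. (2) Smith theory yields a fixed point of a $p$-group only when $p\nmid\chi(X)$; for the finitely many primes dividing $\chi(X)$ one needs the weaker ``almost fixed point'' statement (an equivariant cell-counting argument shows every $p$-group acting on $X$ has an orbit of length at most $|\chi(X)|$, hence a point whose stabilizer has index at most $|\chi(X)|$), and your sketch silently drops these primes. (3) Your inductive step produces an extension $1\to K_1\to G'\to G'|_Y\to 1$ in which the kernel $K_1$ is a finite subgroup of some $O(k)$ of \emph{unbounded} order; since the Jordan property is not closed under extensions (the survey's own example $\mathbb{P}^1\times E$), saying the kernel is ``controlled by the Jordan constant of $K$'' does not close the induction. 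One needs uniform rank bounds for the quotient (of Mann--Su type, obtained from Smith-theoretic control of $\sum_i b_i(X^P;\mathbb{F}_p)$) combined with Lemma~\ref{lem: pre lemma 1}(6), and this is also what makes the constants depend only on $\dim X$ and the cohomology of $X$ rather than on $G$. Until (1)--(3) are supplied, the proposal is an outline of the easy half of the proof only.
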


Again, we refer to Mundet i Riera \cite[\S 1]{MiR19} for the excellent survey on groups acting smoothly or continuously on smooth manifolds.

\end{document}